\pgfplotsset{compat=1.16}
\theoremstyle{plain}
\newtheorem{theorem}{Theorem}[section]
\newtheorem{proposition}[theorem]{Proposition}
\newtheorem{lemma}[theorem]{Lemma}
\newtheorem{corollary}[theorem]{Corollary}
\numberwithin{equation}{section}
\theoremstyle{definition}
\newtheorem{definition}[theorem]{Definition}
\newtheorem{example}[theorem]{Example}
\theoremstyle{remark}
\newtheorem{remark}[theorem]{Remark}
\numberwithin{figure}{section}
\setlist[enumerate]{topsep = 1ex, leftmargin=.7cm, itemsep= -2pt}
\definecolor{color1}{HTML}{1b9e77}
\definecolor{color2}{HTML}{F1A340}
\definecolor{color3}{HTML}{5e3c99}
\newcommand{\R}{\mathbb{R}}
\newcommand{\Rp}{\mathbb{R}_{+}}
\newcommand{\Z}{\mathbb{Z}}
\newcommand{\C}{\mathbb{C}}
\newcommand{\ii}{\mathrm{i}\,}
\newcommand{\Zpos}{\mathbb{Z}_{>0}}
\newcommand{\vol}{\mathrm{vol}}
\newcommand{\dz}{|\dd z|^2}
\newcommand{\dw}{|\dd w|^2}
\newcommand{\dvol}[1]{\dd V_{#1}}
\newcommand{\dboundary}[1]{\dd \ell_{#1}}
\newcommand{\evaluation}{\mathsf{ev}}
\newcommand{\boldone}{\mathbbm{1}}
\DeclareMathOperator{\id}{{\boldone}}
\newcommand{\blank}{\:\cdot\:}
\newcommand{\setsuchthat}[2]{\left\{ {#1} \:\middle|\: {#2} \right\}}
\newcommand{\setsuchthatinline}[2]{\{ {#1} \:|\: {#2} \}}
\newcommand{\restrict}[2]{{#1}\vert_{{#2}}}
\newcommand{\const}{\mathrm{(const.)}}
\newcommand{\detz}[1]{{\det}_\zeta \, \Delta_{#1}}
\newcommand{\detzp}[1]{{\det'_\zeta} \, \Delta_{#1}}
\newcommand{\schwarzian}[1]{\mathcal{S}[{#1}]}
\newcommand{\preschwarzian}[1]{\mathcal{A}[{#1}]}
\newcommand{\genus}{\mathsf{g}}
\newcommand{\boundaries}{\mathsf{b}}
\newcommand{\moduli}[2]{\mathcal{M}_{{#1}, {#2}}}
\newcommand{\disk}{\mathbb{D}}
\newcommand{\annulus}{\mathbb{A}}
\DeclareMathOperator{\conformalanomaly}{S_L^0}
\DeclareMathOperator{\paformula}{PA}
\newcommand{\lpot}[1]{\mathcal{H}_{#1}}
\newcommand{\lpotx}[2]{\mathcal{H}^{#2}_{#1}}
\newcommand{\lenergy}[1]{\operatorname{I}_{\, {#1}}}
\newcommand{\charge}{\mathbf{c}}
\newcommand{\globaltriv}{Z}
\newcommand{\globaltrivcft}{Z}
\newcommand{\globaltrivzeta}{Z_\zeta^\charge}
\newcommand{\boundaryterm}[1]{\operatorname{B}_{#1}}
\newcommand{\SLE}{\mathrm{SLE}}
\newcommand{\sleloop}[1]{\mu^{\charge}_{#1}}
\newcommand{\sleloopx}[2]{\mu^{{\charge}, {#2}}_{#1}}
\newcommand{\loopnbhd}[2]{\mathcal{O}_{#1}({#2})}
\newcommand{\flatmetric}{\dd z \dd \bar z}
\newcommand{\interaction}{\operatorname{V}}
\newcommand{\blmpair}{\Lambda}
\newcommand{\blmpairr}{\Lambda^*}
\DeclareMathOperator{\Det}{Det}
\newcommand{\Detrc}{\Det_\R^\charge}
\newcommand{\Detrpc}{\Det_{\Rp}^{\charge}}
\newcommand{\sewisoloops}[1]{\operatorname{S}_{#1}}
\newcommand{\cftspec}{\mathcal{S}}
\newcommand{\lkenergy}{S}
\newcommand{\foliation}{\eta}
\newcommand{\inversion}{j}
\title{Two-loop Loewner potentials}
\author{
Yan Luo\thanks{Institut des Hautes Études Scientifiques, Bures-sur-Yvette, France.
\protect\url{luoy@ihes.fr}}
\; and \,
Sid Maibach\thanks{Institute for Applied Mathematics, University of Bonn, Germany.  
\protect\url{maibach@uni-bonn.de}}
}
\date{November 4, 2024}
\begin{document}
\maketitle

\begin{abstract}
We study a generalization of the Schramm--Loewner evolution loop measure to pairs of non-intersecting Jordan curves on the Riemann sphere.
We also introduce four equivalent definitions for a two-loop Loewner potential:
respectively expressing it in terms of normalized Brownian loop measure, zeta-regularized determinants of the Laplacian, an integral formula generalizing universal Liouville action, and Loewner--Kufarev energy of a foliation.
Moreover, we prove that the potential is finite if and only if both loops are Weil--Petersson quasicircles, that it is an Onsager--Machlup functional for the two-loop SLE, and a variational formula involving Schwarzian derivatives.

Addressing the question of minimization of the two-loop Loewner potential, we find that any such minimizers must be pairs of circles.
However, the potential is not bounded, diverging to negative infinity as the circles move away from each other and to positive infinity as the circles merge, thus preventing a definition of two-loop Loewner energy for the prospective large deviations principle for the two-loop SLE.

To remedy the divergence, we study a way of generalizing the two-loop Loewner potential by taking into account how conformal field theory (CFT) partition functions depend on the modulus of the annulus between the loops.
This generalization is motivated by the correspondence between SLE and CFT, and it also emerges from the geometry of the real determinant line bundle as introduced by Kontsevich and Suhov.
\end{abstract}

\newpage

\tableofcontents

\newpage

\section{Introduction}

\subsection{Main results about two-loop Loewner potentials}

Much of the conformal geometry of a simple loop $\gamma$ in the Riemann sphere $\hat \C$ may be described by a Möbius-invariant, real-valued function $\lenergy{\hat \C}(\gamma)$ introduced by Takhtajan and Teo \cite{takhtajan_teo} under the name ``universal Liouville action'' as a Kähler potential for the Weil--Petersson metric on universal Teichmüller space, and by Rohde and Wang \cite{rohde_wang, wang_equivalent}
as ``Loewner energy'' in the context of Schramm--Loewner evolution (SLE) loop measures.
Subsequently, this function has appeared in various settings \cite{bishop_wpbeta, johansson_strong_szego, w_volume, renormalized_volume, johansson_viklund_coulomb_gas, sung_wang, maibach_peltola, viklund_wang_lk_energy, moving_frames}.
Following an expression in terms of zeta-regularized determinants of Laplacians, see \cite{wang_equivalent, peltola_wang} and Appendix~\ref{appendix:background}, the Loewner potential $\lpot{\hat \C}(\gamma)$ is a function such that $\lenergy{\hat \C}(\gamma)$ is obtained by normalization,
\begin{equation}
    \label{eq:lenergy_one}
    \lenergy{\hat \C}(\gamma) = 12\big(\lpot{\hat \C}(\gamma) - \inf_{\eta} \lpot{\hat \C}(\eta)\big), \qquad
    \lpot{\hat \C}(\gamma) =
    \log \frac{
    \detz{\restrict{g}{\hat \C}}
    }{
    \detz{\restrict{g}{D_1}}
    \detz{\restrict{g}{D_2}}
    },
\end{equation}
where $D_1$ and $D_2$ are the domains in the complement of $\gamma$ and the infimum taken over all simple loops is realized by the loop being a circle.

In this paper, we consider two non-intersecting simple loops $\gamma_1$, $\gamma_2$ in $\hat \C$, see Figure~\ref{fig:setup}.
We introduce four formulas for the Loewner potential $\lpot{\hat \C, 2}(\gamma_1, \gamma_2)$ of the pair of loops and prove their equivalence up to constants for smooth loops.
\begin{enumerate}
    \item
        The initial definition in Section~\ref{section:two_loop_sle} combines the one-loop Loewner potential of each individual loop with an interaction term,
        \begin{equation}
            \label{eq:lpot_def}
            \lpot{\hat \C, 2}(\gamma_1, \gamma_2) = \lpot{\hat \C}(\gamma_1) + \lpot{\hat \C}(\gamma_2) + \blmpairr(\gamma_1, \gamma_2).
        \end{equation}
        The interaction through the normalized Brownian loop measure $\blmpairr$, see \cite{loop_soup, field_lawler} and Appendix~\ref{appendix:blm}, is motivated by the construction of a two-loop SLE measure and the fact that, by Theorem~\ref{thm:onsager_machlup_two}, this two-loop Loewner potential is an Onsager--Machlup functional for said SLE measure, analogous to the main result in \cite{carfagnini_wang} for one loop.
    \item
        In Section~\ref{section:lpot_detz}, we prove the formulation in terms of zeta-regularized determinants of Laplacians (see Appendix~\ref{appendix:zeta}) 
        as it is suggested in \cite{peltola_wang},
        \begin{equation}
        \label{eq:lpot_two_detz}
            \lpot{\hat \C, 2}(\gamma_1, \gamma_2)
            =
            \log \frac{
            \detz{\restrict{g}{\hat \C}}
            }{
            \detz{\restrict{g}{D_1}}
            \detz{\restrict{g}{A}}
            \detz{\restrict{g}{D_2}}
            }
            + \const
            ,
        \end{equation}
        where the setup is as in Figure~\ref{fig:setup} and the constant is independent of the loops.
    \item 
        In Section~\ref{section:pre_schwarzians}, we find a generalization of the universal Liouville action \cite{takhtajan_teo}, involving the pre-Schwarzians $\preschwarzian{f} = f''/f'$ of conformal maps uniformizing the complement of the pair of  loops as in Figure~\ref{fig:setup}. Theorem~\ref{thm:lpot_liouville} states that
        \begin{equation}
        \label{eq:lpot_preschwarzian}
        \begin{aligned}
            \lpot{\hat \C, 2}(\gamma_1, \gamma_2)
            =\; &
            \lpot{\hat \C, 2}(e^{-2\pi \tau} S^1, S^1)
            \\
            &+ \frac{1}{12\pi} \bigg(
            \;
                \iint\limits_{e^{-2 \pi \tau} \disk} \big|
                \preschwarzian{f_1}
                \big|^2
                \;
                \dz
                +
                \iint\limits_{\annulus_\tau} \big|
                \preschwarzian{f_A}
                \big|^2
                \;
                \dz
                +
                \iint\limits_{\disk^*} \big|
                \preschwarzian{f_2}
                \big|^2
                \;
                \dz
            \bigg)
            \\
            &- \frac{1}{3} \log \left|
                \frac{f_2'(\infty)}{f_1'(0)}
            \right|
            .
        \end{aligned}
        \end{equation}
    \item 
        In Section~\ref{section:foliation_energy}, we find a relation to the Loewner--Kufarev energy of a measure~$\rho$, denoted $\lkenergy(\rho)$ and first defined in \cite{ang_park_wang_radial}.
        The measure is obtained through the Loewner--Kufarev equation of the foliation of $\C \setminus \{0\}$ by equipotential loops as obtained from the uniformizing maps in Figures~\ref{fig:setup} and~\ref{fig:foliation}.
        We prove the following generalization of the results in \cite{viklund_wang_lk_energy} for the one-loop Loewner energy,
        \begin{equation}
            \label{eq:lpot_foliation_energy}
            \lpot{\hat \C, 2}(\gamma_1, \gamma_2)
            =
            \lpot{\hat \C, 2}(e^{-2\pi\tau} S^1, S^1)
            + \frac{4}{3}\lkenergy(\rho)
            - \frac{1}{6} \log \left|
                \frac{
                    f_2'(\infty)
                }{
                    f_1'(0)
                }
            \right|
            + \const
            .
        \end{equation}
\end{enumerate}
\begin{figure}
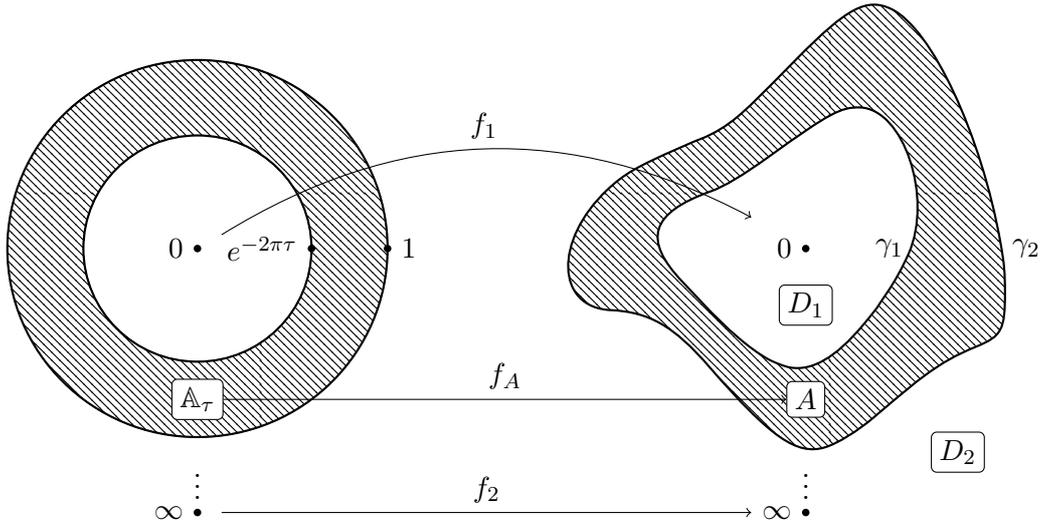

\centering
\includestandalone[]{figures/fig_setup}
\caption{
    In this work, we study the conformal geometry of two non-intersecting simple smooth loops $\gamma_1$, $\gamma_2$ in the Riemann sphere $\hat \C$. By conformal invariance, let the disk $D_1$ enclosed by $\gamma_1$ contain $0$ and let the disk $D_2$ enclosed by $\gamma_2$ contain $\infty$.
    $A$ is the annulus between $\gamma_1$ and $\gamma_2$ and we denote the modulus of $A$ by $\tau$, and the uniformizing map from the standard annulus $\annulus_\tau = \setsuchthat{z \in \C}{e^{-2 \pi \tau} < |z| < 1}$ to $A$ by $f_A$.
    Also, $f_1$ is the Riemann mapping from $e^{-2 \pi \tau}\disk$ to $D_1$ such that $f_1(0) = 0$ and $f_1'(0) > 0$ and $f_2$ is the Riemann mapping from $\disk^* = \hat \C \setminus \disk$ to $D_2$ such that $f_2(\infty) = \infty$ and $f_2'(\infty) > 0$, where $\disk = \setsuchthat{z \in \C}{|z| < 1}$ is the unit disk.
}
\label{fig:setup}
\end{figure}

On the subject of a natural regularity class for the loops, we find that $\lpot{\hat \C, 2}(\gamma_1, \gamma_2)$, as defined by Equation~\eqref{eq:lpot_def}, is finite if and only if both $\gamma_1$ and $\gamma_2$ are Weil--Petersson quasicircles.
This follows from the fact that, since the universal Liouville action of a single loop is finite if and only if it is a Weil--Petersson quasicircle \cite{takhtajan_teo}, the same holds true for the one-loop Loewner potential and, moreover, the Brownian loop measure is finite for any two disjoint non-polar sets \cite{field_lawler}. 
Alternatively, by \cite[Theorem~1.1]{viklund_wang_lk_energy}, finiteness of the formulas~\eqref{eq:lpot_preschwarzian} and \eqref{eq:lpot_foliation_energy} implies that the loops are Weil--Petersson quasicircles.
Let us mention that Weil--Petersson quasicircles have recently attracted a great deal of attention for its various equivalent descriptions and its important role in geometric function theory and conformal field theory \cite{analytical_foundation_cft, shen_wp, bishop_wpbeta}.

Next, in Section~\ref{section:variations_sle}, we address the question whether minimizers of the two-loop Loewner potential exist.
On the one hand, this is relevant for the definition of a notion of a two-loop Loewner energy by subtracting the infimum as in Equation~\eqref{eq:lenergy_one} for the one-loop Loewner potential.
Generally, notions of Loewner energy are expected to provide rate functions for the large deviations principle of SLE \cite{wang_survey, peltola_wang, guskov, abuzaid_healey_peltola}.
On the other hand, the task of finding the minimizers pertains to the question in conformal geometry of how to canonically embed various configurations of curves and graphs \cite{bonk_eremenko, piecewise_geodesic, peltola_wang}.
We employ a two-step strategy to identify the infimum of $\lpot{\hat \C, 2}(\gamma_1, \gamma_2)$.
\begin{enumerate}
    \item 
    We prove a variational formula for the two-loop Loewner potential, similar to that in \cite{takhtajan_teo, sung_wang} in the case of a single loop --- see Proposition~\ref{prop:two_loop_variation}.
    The formula allows for deformations of the loops by infinitesimal Beltrami differentials $\nu$ with support in $D_1 \cup D_2$, hence preserving the modulus of $A$,
    \begin{equation}
        \eval{\pdv{\varepsilon}}_{\varepsilon = 0}
        \lpot{\hat \C, 2}(\omega^{\varepsilon \nu}(\gamma_1), \omega^{\varepsilon \nu}(\gamma_2))
        = - \frac{1}{3\pi} \left(
            \iint_{D_1} \nu \, \schwarzian{f_1^{-1}} \, \dz
            + \iint_{D_2} \nu \, \schwarzian{f_2^{-1}} \, \dz
        \right).
    \end{equation}
    Note how the Schwarzian derivatives $\schwarzian{f_1^{-1}}$ and $\schwarzian{f_2^{-1}}$ vanish if and only if the uniformizing maps $f_1$ and $f_2$ as depicted in Figure~\ref{fig:setup} are Möbius transformations.
    Thus, if a minimizer exists, both loops must be circles.
    \item
    By conformal invariance, a configuration consisting of two circles is characterized by the modulus $\tau$ of the annulus bounded by the circles.
    Using the formulas for the zeta-regularized determinant of the Laplacian on a disk or annulus found in \cite{weisberger}, we explicitly compute $\lpot{\smash{\hat \C, 2}}(e^{-2\pi \tau} S^1, S^1)$.
    However, we find that the two-loop Loewner potential diverges to $-\infty$ as two circles move further apart in the limit $\tau \to \infty$.
    Moreover, it diverges to $+\infty$ as the circles merge in the limit $\tau \to 0$. See also the plot in Figure~\ref{fig:zeta_det}.
\end{enumerate}
\begin{figure}
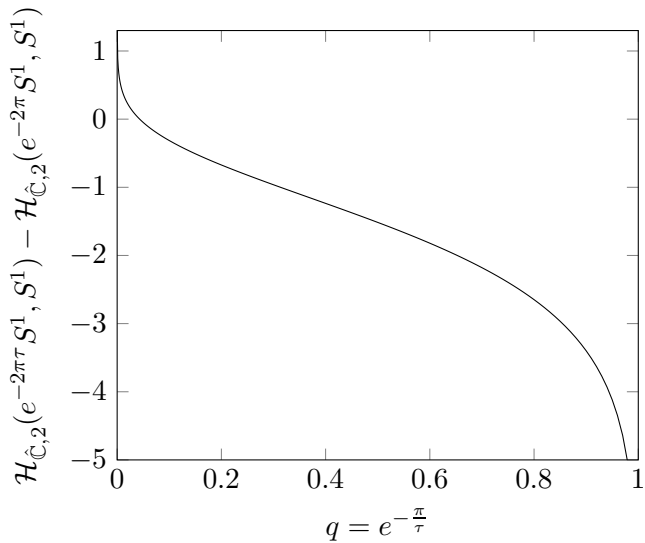

\centering
\includestandalone[]{figures/fig_zeta_det}
\caption{
    Plot of the two-loop Loewner potential of a pair of circles separated by an annulus with modulus $\tau$, normalized by the value at $\tau = 1$.
    Note the divergences as $\tau \to \infty$, $q \to 1$, moving the circles apart, and as $\tau \to 0$, $q \to 0$, merging the circles.
}
\label{fig:zeta_det}
\end{figure}

As a final point, in Section~\ref{section:cft_partition_functions}, we provide a remedy to the problem of non-existence of minimizers for the two-loop potential described above.
As we will explain below, the solution is motivated by the role that SLE measures play as the universal laws of interfaces in
conformal field theory.
For now, let us just give a precise mathematical description of the procedure.
After fixing a constant $\charge \neq 0$, we will pick positive real numbers ${Z_g(D) > 0}$ indexed by $D$ being either $\hat \C$, or any simply connected domain, or any doubly-connected domain in $\hat \C$, and subsequently by $g$ being any smooth metric on $D$.
Assume that the family $Z_g(D)$ satisfies the following properties under conformal transformations and diffeomorphisms,
\begin{equation}
\begin{gathered}
    Z_{e^{2\sigma} g}(D) = e^{\charge \conformalanomaly(\sigma, g)} Z_g(D), \qquad
    Z_{f_* g}(f(D)) = Z_g(D),
    \\
    \conformalanomaly(\sigma, g) =
    \frac{1}{12 \pi} \iint_\Sigma \bigg(
    \frac{1}{2} |\nabla_g \sigma|_g^2 + R_g \sigma
    \bigg) \dvol{g}
    + \frac{1}{12 \pi} \int_{\partial_\Sigma} k_g \sigma \, \dboundary{g},
\end{gathered}
\end{equation}
where $\sigma : D \to \R$ is any smooth function and $f : D \to \hat \C$ is a diffeomorphism onto its image.
$\nabla_g$, $R_g$, and $k_g$ are respectively the divergence, Gaussian curvature, and boundary curvature with respect to the metric $g$. We define a generalized notion of two-loop Loewner potential by
\begin{equation}
    \label{eq:def_lpot_z_two}
    \lpotx{\hat \C, 2}{Z}(\gamma_1, \gamma_2) =
    \frac{2}{\charge}
    \log \frac{
        Z_g(D_1)
        Z_g(A)
        Z_g(D_2)
    }{
        Z_g(\hat \C)
    }
    .
\end{equation}
As explained in Section~\ref{section:trivialization_sle}, and as already observed in \cite{maibach_peltola} for a single loop, we recover the probabilistic two-loop Loewner potential $\lpotx{\hat \C, 2}{Z}(\gamma_1, \gamma_2) = \lpot{\hat \C, 2}(\gamma_1, \gamma_2) + \const$ in Equation~\eqref{eq:lpot_two_detz} up to a constant by setting
\begin{equation}
    Z_g(D) = \left(\frac{
        \detz{\restrict{g}{D}}
    }{
        \boundaryterm{g}(D)
    }\right)^{-\charge/2}
    ,
    \qquad
    \boundaryterm{g}(\Sigma) = \exp\bigg(\frac{1}{4\pi} \int_{\partial_\Sigma} k_g \dboundary{g}\bigg).
\end{equation}
Generally, by the uniformization theorem, any family $Z_g(D)$ is completely determined by the values of
\begin{equation}
    Z_g(\hat \C), \qquad
    Z_{\flatmetric}(\disk), \qquad
    Z_{\flatmetric}(\annulus_\tau), \tau > 0,
\end{equation}
where $g$ belongs to the conformal class of $\hat \C$, and $\flatmetric = \dd x^2 + \dd y^2$ is the flat metric.
In particular, an analogous generalization of the one-loop Loewner potential can only be changed by a finite constant with respect to Equation~\eqref{eq:lenergy_one}.
Assuming that $Z_{\flatmetric}(\annulus_\tau)$ is a smooth function of $\tau$, we conclude with Proposition~\ref{prop:lpot_z_circles}, which provides the condition that there exists a pair of disjoint circles attaining the infimum of new two-loop Loewner potential $\lpotx{\hat \C, 2}{Z}$ if and only if the function
\begin{equation}
\label{eq:criterion}
    e^{-\frac{\pi}{3} \charge \tau}
    Z_{\flatmetric}(
        \annulus_\tau
    )
\end{equation}
has a global minimum in $\tau \in (0, \infty)$.

Finally, we would like to highlight two technical contributions in this work.
Namely, in Section~\ref{section:foliation_energy}, we prove a generalization of the Grunsky inequality (Proposition~\ref{prop:multiple_grunsky}), which is a preliminary to obtain Equation~\eqref{eq:lpot_foliation_energy}.
Secondly, Lemma~\ref{lemma:det_lim} in Appendix~\ref{appendix:background} is a novel renormalization formula for zeta-regularized determinants of Laplacians analogous to normalized Brownian loop measure. The lemma is applied to prove Equation~\eqref{eq:lpot_two_detz}.

\subsection{Motivation of the CFT definition of Loewner potentials}

In the following, we motivate the generalized definitions~\eqref{eq:def_lpot_z_two} of two-loop Loewner potentials $\lpotx{\hat \C, 2}{Z}(\gamma_1, \gamma_2)$ from the point of view of (Euclidean) 2D conformal field theory (CFT).
For mathematical introductions to the latter, see for instance \cite{gawedzki_lectures, schottenloher, lcft_review}.
In the CFT terminology, the constant $\charge \neq 0$ is called the central charge and the numbers $Z_g(D) > 0$ are the partition functions of any fixed CFT of central charge $\charge$ in the domains $D$ with respect to the metrics $g$.

Generally speaking, one may think of the loops $\gamma_1$, $\gamma_2$ as interfaces arising in configurations of a fixed CFT and the Loewner potential as an action functional for such interfaces.
Let us explain this heuristically using a statistical mechanical system on a finite discretization of $\hat \C$ with states $\sigma$, a local energy function $S(\sigma)$, and inverse temperature $\beta$.
For example, consider the critical Ising model where states take values $+1$, $-1$ and $S$ considers a nearest neighbor interaction.
The partition function is given by
\begin{equation}
    Z(\hat \C) = \sum_\sigma e^{-\beta S(\sigma)}.
\end{equation}
When considering the partition function $Z(D)$ on a subdomain $D \subset \hat \C$ such as $D_1$, $A$, $D_2$ in Figure~\ref{fig:setup}, we have to fix boundary conditions on the restricted state $\restrict{\sigma}{D}$, say $+1$ or $-1$ in the Ising model, such that $\gamma_1$ and $\gamma_2$ are interfaces for any states with these boundary conditions.
The problem of fixing the boundary condition in conformally invariant ways is the starting point of Cardy's work on boundary conformal field theory
\cite{cardy_boundary_conditions, cardy_fusion, cardy_bcft} which we briefly come back to in Example~\ref{example:characters}.
With the setup above, the probability of $\gamma_1$, $\gamma_2$ appearing as interfaces becomes
\begin{equation}
    \quad \sum_{\mathclap{\tikz{
        \node[circle, fill, inner sep=1pt, outer sep=0.0cm] (A) at (0, 0) {};
        \node[draw, rounded corners=2pt, inner sep=3pt, anchor=north] (B) at (0, -0.3) {$\substack{\text{$\sigma$ such that} \\ \text{$\gamma_1$, $\gamma_2$ are interfaces}}$};
        \draw[] (A) -- (B);
    }}} \quad
    \frac{e^{-\beta S(\sigma)}}{Z(\hat \C)}
    =
    \quad \sum_{\mathclap{\tikz{
        \node[circle, fill, inner sep=1pt, outer sep=0.0cm] (A) at (0, 0) {};
        \node[draw, rounded corners=2pt, inner sep=3pt, anchor=north] (B) at (0, -0.3) {$\substack{\text{$\restrict{\sigma}{D_1}$, $\restrict{\sigma}{A}$, $\restrict{\sigma}{D_2}$} \\ \text{satisfying} \\ \text{boundary conditions}}$};
        \draw[] (A) -- (B);
    }}} \quad
    \frac{e^{-\beta \big(
        S(\restrict{\sigma}{D_1}) +
        S(\restrict{\sigma}{A}) +
        S(\restrict{\sigma}{D_2})
    \big)}}{Z(\hat \C)}
    =
    \frac{Z(D_1) Z(A) Z(D_2)}{Z(\hat \C)}.
\end{equation}
Letting the Loewner potential define a measure on pairs of loops with density $e^{\frac{\charge}{2}\lpotx{\hat \C, 2}{Z}(\gamma_1, \gamma_2)}$ leads to the definition of two-loop Loewner potential in Equation~\eqref{eq:def_lpot_z_two}.
Note that this argument may be generalized to arbitrary configurations of loops, leading to the notions of Loewner potential defined in Appendix~\ref{appendix:detrc}.

Returning to mathematics, Theorem~\ref{thm:onsager_machlup_two} stating that the probabilistic two-loop Loewner potential~\eqref{eq:lpot_two_detz} is an Onsager--Machlup functional for the two-loop SLE measures defined in Section~\ref{section:two_loop_sle} essentially confirms that the Loewner potential is what we called the ``density'' of a measure on pairs of loops, where this measure is said two-loop $\SLE_\kappa$ measure with
\begin{equation}
    \charge = \frac{(6 - \kappa)(3\kappa - 8)}{2 \kappa}, \qquad \kappa \in (0, 4].
\end{equation}

In probability theory, SLE as originally defined by Schramm \cite{schramm} in the chordal case has been generalized to loops in the Riemann sphere and general Riemann surfaces \cite{werner07, kontsevich_suhov, benoist_dubedat, kemppainen_werner, Zhan21, ang_cai_sun_wu}.
Axiomatic characterizations of chordal SLE \cite{schramm, lsw_conformal_restriction} and recently also of loop SLE \cite{kontsevich_suhov, baverez_jego, gordina_wei_wang} support the appearance of SLE as interfaces in CFT as conjectured in \cite{bauer_bernard_sle, friedrich_kalkkinen, kontsevich_suhov}.
Theorems about SLE as interfaces have been proven in several special cases of scaling limits of critical lattice models (for instance, see \cite{convergence_ising}).
Multiple SLE has been studied extensively in the chordal case, see \cite{lawler_multiple_chordal, peltola_wang, peltola_wu} and the references therein, and \cite{bauer_bernard_kytola} for the relation to CFT.
We would also like to point out the recent interest in chordal SLE in multiply-connected domains \cite{lawler_multiply_connected, aru_bordereau, alberts_byun_kang} and other works involving SLE loops in multiply-connected domains \cite{kassel_kenyon, cle_annuli}.
In the recent work \cite{baverez_jego}, multiple loop SLE is interpreted as the amplitude of a CFT-like theory associated to SLE.

Let us provide slightly more details towards the construction in Section~\ref{section:two_loop_sle}.
Assuming $\kappa \in (0, 4]$, we show that by starting with the product of two independent one-loop SLE measures
$\sleloop{\hat \C}$
as defined in \cite{Zhan21}, a two-loop SLE measure
\begin{equation}
    \label{eq:two_loop_sle}
    \sleloop{\hat \C, 2}(\dd \gamma_1, \dd \gamma_2) =
    \boldone_{\{\gamma_1 \cap \gamma_2 = \emptyset\}}
    e^{\frac{\charge}{2} \blmpairr(\gamma_1, \gamma_2)}
    \sleloop{\hat \C}(\dd \gamma_1)
    \otimes
    \sleloop{\hat \C}(\dd \gamma_2)
\end{equation}
is uniquely determined by conformal invariance and a cascade relation, which allows consecutive sampling of the two loops (See Theorem~\ref{thm:two_loop_measure}).
This definition involves the normalized Brownian loop measure $\blmpairr$, see \cite{loop_soup, field_lawler} and Appendix~\ref{appendix:blm}.

However, the above is by far not the only way to obtain a two-loop SLE measure.
It would be interesting to relate the following constructions to Equation~\eqref{eq:two_loop_sle}.
\begin{enumerate}
    \item 
    For $\kappa \in (8/3, 4]$, the one-loop measure $\sleloop{\hat \C}$ may also be defined from the counting measure over a conformal loop ensemble (CLE), see \cite{kemppainen_werner, ang_sun_integrability_v2, ang_cai_sun_wu}.
    In the same way, a two-loop SLE measure may be obtained by sampling two loops independently from the counting measure on CLE.
    \item
    Consider a single SLE loop $\gamma$ with $\kappa \in (4, 8)$ in $\hat \C \setminus \{0, \infty\}$.
    By the duality principle of SLE, \cite[Theorem~1.2]{ang_cai_sun_wu}, the inner and outer boundaries of $\gamma$ surrounding $0$ and $\infty$ forms a pair of disjoint $\SLE_{16/\kappa}$ loops, giving another two-loop SLE measure.
\end{enumerate}

Even though we have the universality result \cite[Theorem~1.9]{baverez_jego} of SLE loop measure in the sense of Malliavin--Kontsevich--Suhov \cite{malliavin_measure, kontsevich_suhov}, different instances of SLE loop measures with the same $\kappa$ might still be related by nontrivial Radon--Nikodym derivatives.
Since the complement of a single loop consists of simply connected domains, the Radon--Nikodym derivative in this case is merely a constant. However, when considering a two-loop measure, it may depend on the modulus $\tau$ of the annulus bounded by the pair of loops.
In Section~\ref{section:cft_partition_functions} and Appendix~\ref{appendix:detrc}, we recall how these Radon--Nikodym derivatives and also the partition functions $Z_g(\blank)$ may be encoded as trivializations of the real determinant line bundle \cite{kontsevich_suhov, benoist_dubedat, maibach_peltola}.
From this abstraction, the Equation~\eqref{eq:def_lpot_z_two} for the two-loop Loewner potential emerges quite naturally.

To conclude, this brings us to the conceptual insight of this work, clarifying the extent of universality of SLE random curves appearing as the law of interfaces in CFT. Namely, whenever interfaces separating a domain into multiply connected subdomains are studied, the conjectured law of the interface differs from the established probabilistic definition of SLE by a Radon-Nikodym derivative involving the partition functions that the CFT under consideration associates to the respective subdomains. These partition functions, in turn, are functions of the modular parameters of the domains.

\subsection*{Acknowledgments}
We would like to thank Eveliina Peltola and Yilin Wang for their input and feedback.
Y.L.~is supported by the European Union (ERC, RaConTeich, 101116694)\footnote{Views and opinions expressed are however those of the authors only and do not necessarily reflect those of the European Union or the European Research Council Executive Agency. Neither the European Union nor the granting authority can be held responsible for them.} and previously by the Fondation Sciences Mathématiques de Paris (FSMP).
S.M.~is supported by the Deutsche Forschungsgemeinschaft (DFG, German Research Foundation) under Germany's Excellence Strategy EXC-2047/1-390685813 and by the grant CRC 1060 ``The
Mathematics of Emergent Effects'' (Project-ID 211504053).
S.M.~acknowledges the support of the IHES (France), where parts of the work were completed.

\section{Two-loop SLE}
\label{section:two_loop_sle}

\subsection{Definition}

The two-loop $\SLE_\kappa$ measure $\sleloop{\hat \C, 2}$ should be absolutely continuous with respect to the product of two one-loop $\SLE_\kappa$ loop measures, 
\begin{equation}
    \sleloop{\hat \C, 2}(\dd \gamma_1, \dd \gamma_2)
    =
    \boldone_{\{\gamma_1 \cap \gamma_2 = \emptyset\}}
    e^{\frac{\charge}{2} \interaction(\gamma_1, \gamma_2)}
    \sleloop{\hat \C}(\dd \gamma_1)
    \otimes
    \sleloop{\hat \C}(\dd \gamma_2)
    ,
\end{equation}
where $\interaction$ is a measurable interaction term.
Besides the conformal invariance and finiteness properties also satisfied by the one-loop measure (see Appendix~\ref{appendix:one_loop_sle}), we expect a two-loop measure to satisfy the cascade relation
\begin{equation}
    \label{eq:cascade}
    \sleloop{\hat \C, 2}(\dd \gamma_1, \dd \gamma_2) = \sleloop{\hat \C}(\dd \gamma_1) \otimes
    \sum_{D \in \pi_0(\hat \C \setminus \gamma_1)}
    \sleloop{D}(\dd \gamma_2).
\end{equation}
In words, this means that the two-loop measure is the product of the one-loop measure in $\hat \C$ and a second loop which is sampled in the complement of the first loop.
We obtain the following result, where only consider simple loops, hence $\kappa \in (0, 4]$.
\begin{theorem}
\label{thm:two_loop_measure}
The two-loop $\SLE_\kappa$ measure for $\kappa \in (0, 4]$ on $\hat \C$ defined by
\begin{equation}
    \sleloop{\hat \C, 2}(\dd \gamma_1, \dd \gamma_2) =
    \boldone_{\{\gamma_1 \cap \gamma_2 = \emptyset\}}
    e^{\frac{\charge}{2} \blmpairr(\gamma_1, \gamma_2)}
    \sleloop{\hat \C}(\dd \gamma_1)
    \otimes
    \sleloop{\hat \C}(\dd \gamma_2)
\end{equation}
is the unique conformally invariant measure on pairs of non-intersecting loops in $\hat \C$, which is absolutely continuous with respect to the product measure of two single $\SLE_\kappa$ loop measures and satisfies the cascade relation \eqref{eq:cascade}.
\end{theorem}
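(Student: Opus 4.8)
The plan is to treat the existence direction (that $\sleloop{\hat\C, 2}$ enjoys the three properties) and the uniqueness direction separately, with both resting on a single input: the conformal restriction covariance of the one-loop $\SLE_\kappa$ measure of \cite{Zhan21}, as recalled in Appendix~\ref{appendix:one_loop_sle}. Concretely, for a simply connected domain $D \subsetneq \hat\C$ with boundary $\partial D = \gamma_1$ and any loop $\gamma_2 \subset D$, this covariance reads
\[
\sleloop{D}(\dd \gamma_2) = e^{\frac{\charge}{2} \blmpairr(\gamma_1, \gamma_2)} \, \sleloop{\hat\C}(\dd \gamma_2), \qquad \gamma_2 \subset D,
\]
where the exponent is the normalized Brownian loop measure of loops meeting both $\gamma_1$ and $\gamma_2$. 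What makes this the correct restriction term is geometric: since $\gamma_2$ lies in the open set $D$, a Brownian loop meets both $\gamma_2$ and $\hat\C \setminus D$ if and only if it meets both $\gamma_2$ and $\partial D = \gamma_1$, so the loop-measure reweighting governing $\sleloop{D}$ relative to $\sleloop{\hat\C}$ is exactly $\blmpairr(\gamma_1, \gamma_2)$.

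For existence, conformal invariance of $\sleloop{\hat\C, 2}$ is immediate: the product $\sleloop{\hat\C} \otimes \sleloop{\hat\C}$ is conformally invariant, and both the indicator $\boldone_{\{\gamma_1 \cap \gamma_2 = \emptyset\}}$ and $\blmpairr(\gamma_1, \gamma_2)$ are conformal invariants. Absolute continuity holds by construction, the density being finite because $\blmpairr(\gamma_1, \gamma_2) < \infty$ for disjoint non-polar sets \cite{field_lawler}. For the cascade relation, fix $\gamma_1$ and observe that $\hat\C \setminus \gamma_1$ has exactly two components; a loop $\gamma_2$ disjoint from $\gamma_1$ lies in precisely one of them, so only one term of $\sum_{D} \sleloop{D}(\dd \gamma_2)$ is nonzero. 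Applying the restriction covariance to that term gives
\[
\sum_{D \in \pi_0(\hat\C \setminus \gamma_1)} \sleloop{D}(\dd \gamma_2) = \boldone_{\{\gamma_1 \cap \gamma_2 = \emptyset\}} \, e^{\frac{\charge}{2} \blmpairr(\gamma_1, \gamma_2)} \, \sleloop{\hat\C}(\dd \gamma_2),
\]
and integrating against $\sleloop{\hat\C}(\dd \gamma_1)$ yields \eqref{eq:cascade}.

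For uniqueness, let $\nu$ be any measure with the three properties. The right-hand side of the cascade relation \eqref{eq:cascade} is built only from the one-loop measures $\sleloop{D}$, which are themselves fixed --- determined by conformal invariance from the standard loop measure in every simply connected domain. Hence \eqref{eq:cascade} forces $\nu$ to equal this one fixed measure, which the existence computation has just identified with $\sleloop{\hat\C, 2}$; absolute continuity then records this as the explicit density $\boldone_{\{\gamma_1 \cap \gamma_2 = \emptyset\}} e^{\frac{\charge}{2} \blmpairr}$ against the product measure.

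The main obstacle is not this bookkeeping but the restriction covariance itself, together with the proper definition and conformal invariance of the normalized Brownian loop measure $\blmpairr$ from Appendix~\ref{appendix:blm}. The Brownian loop measure of loops touching a curve is delicate, and one must control the loops accumulating near $\gamma_1$ and $\gamma_2$ so that $\blmpairr$ is finite, conformally invariant, and matches the covariance of $\sleloop{\hat\C}$; turning the ``if and only if'' above into a rigorous identity of (normalized) loop measures is where the measure-theoretic care lies. A secondary technical point is the measurability in $\gamma_1$ of the kernel $\gamma_1 \mapsto \sum_{D} \sleloop{D}$, needed to make sense of the disintegration in \eqref{eq:cascade} before integrating in $\gamma_1$.
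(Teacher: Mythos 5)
Your proof is correct and takes essentially the same approach as the paper: both arguments rest on the restriction covariance property \eqref{eq:restriction_covariance} of the one-loop measure together with the geometric identity $\blmpairr(\gamma_2, \hat\C \setminus D) = \blmpairr(\gamma_1, \gamma_2)$ for $\gamma_2 \subset D$, $\partial D = \gamma_1$, which you rightly flag as the key point. The only difference is organizational: the paper parametrizes the unknown density as $e^{\frac{\charge}{2}\interaction(\gamma_1,\gamma_2)}$ and shows the cascade relation \eqref{eq:cascade} holds precisely when $\interaction = \blmpairr$, obtaining your existence and uniqueness steps in one computation, whereas you verify existence directly and then observe (correctly, and in fact without needing absolute continuity) that the right-hand side of \eqref{eq:cascade} is a fixed measure, so the cascade relation alone pins down $\sleloop{\hat\C,2}$.
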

\begin{proof}
    Conformal invariance is a direct consequence of the conformal invariance of one-loop measure $\sleloop{\hat \C}$ and the normalized Brownian loop measure in $\blmpairr(\blank, \blank)$, see respectively \cite{Zhan21, field_lawler}.
    By the restriction covariance property of the one-loop measure \cite{Zhan21}, see also Equation~\eqref{eq:restriction_covariance} in Appendix~\ref{appendix:blm}, the cascade relation \eqref{eq:cascade} determines the two-loop measure uniquely, since
    \begin{equation}
    \begin{aligned}
        \sleloop{\hat \C, 2}(\dd \gamma_1, \dd \gamma_2)
        &=
        \sleloop{\hat \C}(\dd \gamma_1) \otimes
        \sum_{D \in \pi_0(\hat \C \setminus \gamma_1)}
        \boldone_{\{\gamma_2 \subset D\}}
        e^{\frac{\charge}{2} \interaction(\gamma_1, \gamma_2)}
        \sleloop{\hat \C}(\dd \gamma_2)
        \\ &=
        \sleloop{\hat \C}(\dd \gamma_1) \otimes
        \sum_{D \in \pi_0(\hat \C \setminus \gamma_1)}
        \boldone_{\{\gamma_2 \subset D\}}
        e^{
            \frac{\charge}{2}\big(
                \interaction(\gamma_1, \gamma_2)
                - \blmpairr(\gamma_2, \hat \C \setminus D)
            \big)
        }
        \sleloop{D}(\dd \gamma_2)
    \end{aligned}
    \end{equation}
    implies that \eqref{eq:cascade} holds precisely if
    \begin{equation}
        \interaction(\gamma_1, \gamma_2) = \blmpairr(\gamma_1, \gamma_2).
    \end{equation}
\end{proof}

\subsection{Onsager--Machlup functional}

The interpretation of $\blmpairr(\gamma_1, \gamma_2)$ as the interaction energy of a pair of loops leads to the conformally invariant notion of Loewner potential\footnote{
    See Equation~\eqref{eq:lenergy_one}
    for the relation of Loewner potential and energy.
}
of a pair of loops in Equation~\eqref{eq:lpot_def},
\begin{equation}
    \label{eq:lpot_two_def_section}
    \lpot{\hat \C, 2}(\gamma_1, \gamma_2) = \lpot{\hat \C}(\gamma_1) + \lpot{\hat \C}(\gamma_2) + \blmpairr(\gamma_1, \gamma_2).
\end{equation}
Following the one-loop case presented in \cite{carfagnini_wang},
we show that $\lpot{\hat \C, 2}$ is an Onsager--Machlup functional for the two-loop measure.

Let $\gamma$ be a loop with analytic parametrization $f_\gamma : S^1 \to \gamma$. By analyticity, $f_\gamma$ extends to an annulus $U_R = \setsuchthat{z \in \C}{R < |z| < 1/R}$ for some $0<R<1$. Define the following family over $0 < \varepsilon < R$ of neighborhoods of loops close to $\gamma$,
\begin{equation}
\label{eq:loop_nbhd}
\loopnbhd{\varepsilon}{\gamma} = \setsuchthat{f_\gamma(\eta)}{
    \text{$\eta$ non-contractible loop in $U_\varepsilon$}
}.
\end{equation}
In particular, we have
\begin{equation}
\loopnbhd{\varepsilon}{S^1} = \{\text{non-contractible loops in $U_\varepsilon$}\}.
\end{equation}

\begin{theorem}[Onsager--Machlup functional of two-loop $\SLE_\kappa$]
\label{thm:onsager_machlup_two}
    For two pairs of non-intersecting analytic loops $(\gamma_1, \gamma_2)$ and $(\xi_1, \xi_2)$ in $\hat \C$, and families of neighborhoods $\loopnbhd{\varepsilon}{\blank}$ of loops as in Equation~\eqref{eq:loop_nbhd}, we have
    \begin{equation}
        \lim_{\varepsilon \to 0}
        \frac{\sleloop{\hat \C, 2}(
            \loopnbhd{\varepsilon}{\gamma_1} \times
            \loopnbhd{\varepsilon}{\gamma_2}
        )}{\sleloop{\hat \C, 2}(
            \loopnbhd{\varepsilon}{\xi_1} \times
            \loopnbhd{\varepsilon}{\xi_2}
        )
        }
        =
        e^{
        \frac{\charge}{2} \left(
            \lpot{\hat \C, 2}(\gamma_1, \gamma_2)
            - \lpot{\hat \C, 2}(\xi_1, \xi_2)
        \right)
        }
        .
    \end{equation}
\end{theorem}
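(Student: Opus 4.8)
The plan is to reduce the two-loop statement to the one-loop Onsager--Machlup result of Carfagnini--Wang \cite{carfagnini_wang} together with a continuity property of the normalized Brownian loop measure $\blmpairr$. Writing the two-loop measure explicitly via its density against the product of one-loop measures gives
\begin{equation}
    \sleloop{\hat \C, 2}(\loopnbhd{\varepsilon}{\gamma_1} \times \loopnbhd{\varepsilon}{\gamma_2})
    = \iint\limits_{\loopnbhd{\varepsilon}{\gamma_1} \times \loopnbhd{\varepsilon}{\gamma_2}}
    \boldone_{\{\eta_1 \cap \eta_2 = \emptyset\}}
    \, e^{\frac{\charge}{2} \blmpairr(\eta_1, \eta_2)}
    \, \sleloop{\hat \C}(\dd \eta_1) \otimes \sleloop{\hat \C}(\dd \eta_2).
\end{equation}
First I would observe that, since $\gamma_1$ and $\gamma_2$ are disjoint compact sets and the loops in $\loopnbhd{\varepsilon}{\gamma_i}$ are contained in the shrinking annular regions $f_{\gamma_i}(U_\varepsilon)$, for all sufficiently small $\varepsilon$ every $\eta_1 \in \loopnbhd{\varepsilon}{\gamma_1}$ is disjoint from every $\eta_2 \in \loopnbhd{\varepsilon}{\gamma_2}$, and the two families stay uniformly separated. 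In particular the indicator $\boldone_{\{\eta_1 \cap \eta_2 = \emptyset\}}$ equals $1$ identically on the product neighborhood.

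The key step is a continuity input: as $\varepsilon \to 0$ the loops $\eta_i$ converge uniformly to $\gamma_i$, and by continuity of the normalized Brownian loop measure between uniformly separated sets (see \cite{field_lawler} and Appendix~\ref{appendix:blm}) there exists $\delta(\varepsilon) \to 0$ with $|\blmpairr(\eta_1, \eta_2) - \blmpairr(\gamma_1, \gamma_2)| \le \delta(\varepsilon)$ for all $(\eta_1, \eta_2)$ in the product neighborhood. Hence the density factor $e^{\frac{\charge}{2} \blmpairr(\eta_1, \eta_2)}$ is pinched between the two constants $e^{\frac{\charge}{2}(\blmpairr(\gamma_1, \gamma_2) \pm \delta(\varepsilon))}$, and pulling these out of the now unconstrained integral yields
\begin{equation}
    \sleloop{\hat \C, 2}(\loopnbhd{\varepsilon}{\gamma_1} \times \loopnbhd{\varepsilon}{\gamma_2})
    = e^{\frac{\charge}{2} \blmpairr(\gamma_1, \gamma_2) + O(\delta(\varepsilon))}
    \, \sleloop{\hat \C}(\loopnbhd{\varepsilon}{\gamma_1}) \,
    \sleloop{\hat \C}(\loopnbhd{\varepsilon}{\gamma_2}),
\end{equation}
where the product structure appears because $\sleloop{\hat \C} \otimes \sleloop{\hat \C}$ is a product measure and the remaining integrand is constant.

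Finally I would form the ratio of this expression for $(\gamma_1, \gamma_2)$ and for $(\xi_1, \xi_2)$. The error terms $O(\delta(\varepsilon))$ vanish in the limit, the Brownian loop factors contribute $e^{\frac{\charge}{2}(\blmpairr(\gamma_1, \gamma_2) - \blmpairr(\xi_1, \xi_2))}$, and each one-loop ratio $\sleloop{\hat \C}(\loopnbhd{\varepsilon}{\gamma_i}) / \sleloop{\hat \C}(\loopnbhd{\varepsilon}{\xi_i})$ converges to $e^{\frac{\charge}{2}(\lpot{\hat \C}(\gamma_i) - \lpot{\hat \C}(\xi_i))}$ by the one-loop Onsager--Machlup theorem \cite{carfagnini_wang}. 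Multiplying the three limits and invoking the defining identity $\lpot{\hat \C, 2}(\gamma_1, \gamma_2) = \lpot{\hat \C}(\gamma_1) + \lpot{\hat \C}(\gamma_2) + \blmpairr(\gamma_1, \gamma_2)$ from Equation~\eqref{eq:lpot_two_def_section} produces exactly the claimed exponent $\frac{\charge}{2}(\lpot{\hat \C, 2}(\gamma_1, \gamma_2) - \lpot{\hat \C, 2}(\xi_1, \xi_2))$.

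I expect the main obstacle to be the \emph{uniform} continuity of $\blmpairr$ in the second step. Because the unnormalized Brownian loop measure is infinite, one must work with the normalized quantity and verify that $\blmpairr(\eta_1, \eta_2) \to \blmpairr(\gamma_1, \gamma_2)$ holds uniformly over the shrinking neighborhoods, not merely pointwise, controlling the Brownian loops that link both curves by means of the positive separation between the two annular regions $f_{\gamma_1}(U_\varepsilon)$ and $f_{\gamma_2}(U_\varepsilon)$. Once this uniform estimate is in place, the remainder is a routine assembly of the one-loop result, the product structure of $\sleloop{\hat \C} \otimes \sleloop{\hat \C}$, and the definition of the two-loop potential.
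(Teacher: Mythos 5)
Your proof is correct, and it is organized genuinely differently from the paper's. The paper never invokes the one-loop Onsager--Machlup theorem as a black box: it unfolds $\sleloop{\hat \C, 2}$ using the restriction covariance and conformal invariance of $\sleloop{\hat \C}$, transporting both $\loopnbhd{\varepsilon}{\gamma_j}$ and $\loopnbhd{\varepsilon}{\xi_j}$ to the common reference neighborhoods $\loopnbhd{\varepsilon}{S^1}$, so that the leftover integrals against $\sleloop{\hat \C} \otimes \sleloop{\hat \C}$ cancel exactly in the ratio; the uniform convergence of the resulting terms $\blmpairr(\eta_j, \hat \C \setminus U_R) - \blmpairr(f_{\gamma_j}(\eta_j), \hat \C \setminus f_{\gamma_j}(U_R))$ to $\tfrac{1}{12}\lenergy{\hat \C}(\gamma_j)$ is then imported from Corollary~2.4, Lemma~3.2 and Equation~(1.6) of \cite{carfagnini_wang}, i.e.\ the one-loop mechanism is re-derived inside the two-loop proof. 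You instead pinch the interaction density $e^{\frac{\charge}{2}\blmpairr(\eta_1, \eta_2)}$ between constants, factor the remaining product measure, and delegate the two one-loop ratios to Theorem~\ref{thm:onsager_machlup_one}; this is shorter and makes transparent that the only ingredient needed beyond the one-loop result is the uniform continuity of $\blmpairr$ over the shrinking neighborhoods. Note that this continuity input, which you rightly single out as the main obstacle, is not an extra cost of your route: the paper needs the identical statement, in the form $\blmpairr(f_{\gamma_1}(\eta_1), f_{\gamma_2}(\eta_2)) \to \blmpairr(\gamma_1, \gamma_2)$ uniformly over $\loopnbhd{\varepsilon}{S^1} \times \loopnbhd{\varepsilon}{S^1}$, in order to pull the interaction term out of its integrals before the cancellation, and it is exactly what the continuity results of \cite{carfagnini_wang} supply (rather than \cite{field_lawler}, which you cite and which only gives finiteness and M\"obius invariance of $\blmpairr$). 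If you want the continuity self-contained, a sandwich works: by monotonicity, $\blmpairr(\eta_1, \eta_2)$ is bounded above by the mass of loops meeting both closed annular neighborhoods $\overline{f_{\gamma_1}(U_\varepsilon)}$ and $\overline{f_{\gamma_2}(U_\varepsilon)}$, and bounded below by the mass of loops crossing both annular regions, since such loops meet every non-contractible curve in them, hence both $\eta_j$ and both $\gamma_j$; the upper bound converges to $\blmpairr(\gamma_1, \gamma_2)$ by downward continuity of a finite measure, and the lower bound does as well because almost every Brownian loop touching an analytic curve crosses it.
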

\begin{proof}
    Let $R > 0$ be small enough such that the neighborhoods of loops near $\gamma_1$ and $\gamma_2$ exist and become disjoint, i.e.\ $(\cup \, \loopnbhd{R}{\gamma_1}) \cap (\cup \, \loopnbhd{R}{\gamma_2}) = \emptyset$.
    Then for $0 < \varepsilon < R$,
\begin{align*}
    &
    \sleloop{\hat \C, 2}(\loopnbhd{\varepsilon}{\gamma_1} \times \loopnbhd{\varepsilon}{\gamma_2})
    \\ = \; &
    \int_{\loopnbhd{\varepsilon}{\gamma_1}}
    \int_{\loopnbhd{\varepsilon}{\gamma_2}}
    e^{\frac{\charge}{2}
        \blmpairr(\eta_1, \eta_2)
    }
    \sleloop{\hat \C}(\dd \eta_2)
    \otimes 
    \sleloop{\hat \C}(\dd \eta_1)
    \\ = \; &
    \int_{\loopnbhd{\varepsilon}{\gamma_1}}
    \int_{\loopnbhd{\varepsilon}{\gamma_2}}
    e^{
    \begin{subarray}{l}
        \frac{\charge}{2}
        \big( 
        \blmpairr(\eta_1, \eta_2)
        \\
        \phantom{\frac{\charge}{2} \big(}
        - \Lambda^*(\eta_1, \hat \C \setminus f_{\gamma_1}(U_R))
        - \Lambda^*(\eta_2, \hat \C \setminus f_{\gamma_1}(U_R))
        \big)
    \end{subarray}
    }
    \sleloop{f_{\gamma_1}(U_R)}(\dd \eta_2)
    \otimes \sleloop{f_{\gamma_2}(U_R)}(\dd \eta_1)
    \\ = \; &
    \int_{\loopnbhd{\varepsilon}{S^1}}
    \int_{\loopnbhd{\varepsilon}{S^1}}
    e^{
    \begin{subarray}{l}
        \frac{\charge}{2}
        \big( \blmpairr(f_{\gamma_1}(\eta_1), f_{\gamma_2}(\eta_2))
        \\
        \phantom{\frac{\charge}{2} \big(}
        - \Lambda^*(f_{\gamma_1}(\eta_1), \hat \C \setminus f_{\gamma_1}(U_R))
        - \Lambda^*(f_{\gamma_2}(\eta_2), \hat \C \setminus f_{\gamma_1}(U_R))
        \big)
    \end{subarray}
    }
    \sleloop{U_R}(\dd \eta_2)
    \otimes \sleloop{U_R}(\dd \eta_1)
    \\ = \; &
    \int_{\loopnbhd{\varepsilon}{S^1}}
    \int_{\loopnbhd{\varepsilon}{S^1}}
    e^{
    \begin{subarray}{l}
        \frac{\charge}{2}
        \big( \blmpairr(f_{\gamma_1}(\eta_1), f_{\gamma_2}(\eta_2))
        \\
        \phantom{\frac{\charge}{2} \big(}
        + \Lambda^*(\eta_1, \hat \C \setminus U_R)
        - \Lambda^*(f_{\gamma_1}(\eta_1), \hat \C \setminus f_{\gamma_1}(U_R))
        \\
        \phantom{\frac{\charge}{2} \big(}
        + \Lambda^*(\eta_2, \hat \C \setminus U_R)
        - \Lambda^*(f_{\gamma_2}(\eta_2), \hat \C \setminus f_{\gamma_1}(U_R))
        \big)
    \end{subarray}
    }
    \sleloop{\hat \C}(\dd \eta_2)
    \otimes \sleloop{\hat \C}(\dd \eta_1)
\end{align*}
By Corollary~2.4, Lemma~3.2 and Equation~(1.6) in \cite{carfagnini_wang}, we have for $j = 1, 2$,
\begin{equation}
\begin{aligned}
    \lim_{\varepsilon \to 0}
    \sup_{\eta_j \in \loopnbhd{\varepsilon}{S^1}}
    \left(
        \Lambda^*(\eta_j, \hat \C \setminus U_R)
        - \Lambda^*(f_{\gamma_j}(\eta_j), \hat \C \setminus f_{\gamma_j}(U_R))
    \right))
    &=
    \frac{1}{12} \lenergy{\hat \C}(\gamma_j),
    \\
    \lim_{\varepsilon \to 0}
    \inf_{\eta_j \in \loopnbhd{\varepsilon}{\gamma_j}}
    \left(
        \Lambda^*(\eta_j, \hat \C \setminus U_R)
        - \Lambda^*(f_{\gamma_j}(\eta_j), \hat \C \setminus f_{\gamma_j}(U_R))
    \right))
    &=
    \frac{1}{12} \lenergy{\hat \C}(\gamma_j).
\end{aligned}
\end{equation}
Since the analogous statements hold true for the other pair $(\xi_1, \xi_2)$ as well,
we obtain the following limit
\begin{equation}
\begin{aligned}
    \lim_{\varepsilon \to 0}
    \frac{
        \sleloop{\hat \C, 2}(\loopnbhd{\varepsilon}{\gamma_1} \times \loopnbhd{\varepsilon}{\gamma_2})
    }{
        \sleloop{\hat \C, 2}(\loopnbhd{\varepsilon}{\xi_1} \times \loopnbhd{\varepsilon}{\xi_2})
    }
    &=
    e^{
    \frac{\charge}{2} \left(
        \blmpairr(\gamma_1, \gamma_2)
        - \blmpairr(\xi_1, \xi_2)
        + \lenergy{\hat \C}(\gamma_1)
        + \lenergy{\hat \C}(\gamma_2)
        - \lenergy{\hat \C}(\xi_1)
        - \lenergy{\hat \C}(\xi_2)
    \right)
    }
    \\ &=
    e^{
    \frac{\charge}{2} \left(
        \blmpairr(\gamma_1, \gamma_2)
        - \blmpairr(\xi_1, \xi_2)
        + \lpot{\hat \C}(\gamma_1)
        + \lpot{\hat \C}(\gamma_2)
        - \lpot{\hat \C}(\xi_1)
        - \lpot{\hat \C}(\xi_2)
    \right)
    }
    \\ &=
    e^{
    \frac{\charge}{2} \left(
        \lpot{\hat \C, 2}(\gamma_1, \gamma_2)
        - \lpot{\hat \C, 2}(\xi_1, \xi_2)
    \right)
    }
    .
\end{aligned}
\end{equation}
\end{proof}

\section{Characterizations of the two-loop Loewner potential}

In this section, we give three more formulas for the two-loop Loewner potential, which is initialy defined using renormalized Brownian loop measrue in Equation~\eqref{eq:lpot_two_def_section}.

\subsection{Zeta-regularized determinants of Laplacians}
\label{section:lpot_detz}

Using Lemma~\ref{lemma:det_lim}, the two-loop Loewner potential as defined in \eqref{eq:lpot_def} may be expressed in terms of zeta-regularized determinants of Laplacians.
Up to a constant, this is in agreement with the more general definition suggested in \cite[Equation~(1.15)]{peltola_wang}.
\begin{proposition}
    \label{prop:lpot_detz}
    Let $g$ be any smooth metric on $\hat \C$ and $\gamma_1$, $\gamma_2$ any two non-intersecting simple smooth loops.
    With the setup as in Figure~\ref{fig:setup}, we have
    \begin{align*}
        \lpot{\hat \C, 2}(\gamma_1, \gamma_2)
        &=
        \lpot{\hat \C}(\gamma_1) + \lpot{\hat \C}(\gamma_2) + \blmpairr(\gamma_1, \gamma_2)
        \\ &=
        \log \frac{
        \detz{\restrict{g}{\hat \C}}
        }{
        \detz{\restrict{g}{D_1}}
        \detz{\restrict{g}{A}}
        \detz{\restrict{g}{D_2}}
        }
        +
        \const
        .
    \end{align*}
    The constant is independent of the loops and the metric $g$.
\end{proposition}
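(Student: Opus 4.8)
The plan is to substitute the one-loop determinant formula into the definition~\eqref{eq:lpot_two_def_section} and thereby reduce the entire statement to a single identity for $\blmpairr(\gamma_1,\gamma_2)$, which will then follow from the renormalization formula of Lemma~\ref{lemma:det_lim}. Write $D_1^* := \hat\C \setminus \overline{D_1}$ and $D_2^* := \hat\C \setminus \overline{D_2}$ for the simply connected domains complementary to the two loops, so that $D_1^* = A \cup \gamma_2 \cup D_2$ and $D_2^* = D_1 \cup \gamma_1 \cup A$. Applying the one-loop formula~\eqref{eq:lenergy_one} to each loop separately (its two complementary domains being $D_1, D_1^*$ and $D_2, D_2^*$ respectively) gives
\[
    \lpot{\hat \C}(\gamma_1) = \log \frac{\detz{\restrict{g}{\hat \C}}}{\detz{\restrict{g}{D_1}}\,\detz{\restrict{g}{D_1^*}}}, \qquad
    \lpot{\hat \C}(\gamma_2) = \log \frac{\detz{\restrict{g}{\hat \C}}}{\detz{\restrict{g}{D_2}}\,\detz{\restrict{g}{D_2^*}}}.
\]
Inserting these into~\eqref{eq:lpot_two_def_section} and comparing with the claimed right-hand side, the proposition becomes equivalent to the single identity
\[
    \blmpairr(\gamma_1,\gamma_2) = \log \frac{\detz{\restrict{g}{D_1^*}}\,\detz{\restrict{g}{D_2^*}}}{\detz{\restrict{g}{\hat \C}}\,\detz{\restrict{g}{A}}} + \const.
\]

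I would prove this remaining identity by matching the probabilistic renormalizations on the two sides. Let $\mu$ denote the Brownian loop measure, and recall that $\blmpairr(\gamma_1,\gamma_2)$ is, up to the normalization of Appendix~\ref{appendix:blm}, the mass of loops meeting both $\gamma_1$ and $\gamma_2$. Writing $A_i$ for the set of loops meeting $\gamma_i$, inclusion--exclusion gives $\mu(A_1 \cap A_2) = \mu(A_1) + \mu(A_2) - \mu(A_1 \cup A_2)$; here $A_1 \cap A_2$ are the loops meeting both curves, while $A_1 \cup A_2$ are exactly the loops not contained in a single one of the three pieces $D_1, A, D_2$. Each of $\mu(A_1), \mu(A_2), \mu(A_1 \cup A_2)$ diverges, owing to the small loops accumulating on $\gamma_1$ and on $\gamma_2$, but these divergences cancel in the combination --- this cancellation is precisely the finiteness of $\blmpairr$. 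Lemma~\ref{lemma:det_lim} supplies the determinant-side counterpart: applied to the relevant single-loop cuts (the two-loop cut being handled by iterating the single-loop formula inside $D_1^*$), it identifies each renormalized crossing mass with the logarithm of the associated determinant ratio, so that, up to loop-independent counterterms,
\[
    \mu(A_1) \leftrightarrow \log\frac{\detz{\restrict{g}{D_1}}\,\detz{\restrict{g}{D_1^*}}}{\detz{\restrict{g}{\hat \C}}}, \quad
    \mu(A_2) \leftrightarrow \log\frac{\detz{\restrict{g}{D_2}}\,\detz{\restrict{g}{D_2^*}}}{\detz{\restrict{g}{\hat \C}}}, \quad
    \mu(A_1 \cup A_2) \leftrightarrow \log\frac{\detz{\restrict{g}{D_1}}\,\detz{\restrict{g}{A}}\,\detz{\restrict{g}{D_2}}}{\detz{\restrict{g}{\hat \C}}}.
\]
Substituting into the inclusion--exclusion relation, the $\detz{\restrict{g}{\hat \C}}$, $\detz{\restrict{g}{D_1}}$ and $\detz{\restrict{g}{D_2}}$ factors recombine and leave exactly $\log\frac{\detz{\restrict{g}{D_1^*}}\detz{\restrict{g}{D_2^*}}}{\detz{\restrict{g}{\hat \C}}\detz{\restrict{g}{A}}}$ plus a constant, as needed.

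The main obstacle is Lemma~\ref{lemma:det_lim} itself and, above all, checking that the counterterms it produces match the normalization built into $\blmpairr$, so that only a loop-independent constant survives the inclusion--exclusion; this amounts to controlling the coincident short-distance divergences on the two sides by the same local collar data. It then remains to confirm that the constant is independent of the loops and of $g$. Loop-independence follows from the locality of these counterterms, while metric-independence follows from the cancellation of the Polyakov--Alvarez conformal anomaly $\conformalanomaly(\sigma,g)$ in the determinant ratio: since $\chi(\hat\C) = 2 = 1 + 0 + 1 = \chi(D_1) + \chi(A) + \chi(D_2)$, the bulk (Gauss--Bonnet) contributions balance, and since each of $\gamma_1, \gamma_2$ bounds two of the three pieces with opposite orientation, the boundary geodesic-curvature contributions cancel pairwise. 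Hence the ratio does not depend on the representative of the conformal class of $\hat\C$, and as every metric is conformal to the round one, not on $g$ at all.
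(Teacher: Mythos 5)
Your first step --- substituting the one-loop determinant formula into Equation~\eqref{eq:lpot_two_def_section} and reducing the proposition to the single identity
\begin{equation*}
    \blmpairr(\gamma_1,\gamma_2) = \log \frac{\detz{\restrict{g}{A \cup D_2}}\,\detz{\restrict{g}{D_1 \cup A}}}{\detz{\restrict{g}{\hat \C}}\,\detz{\restrict{g}{A}}} + \const
\end{equation*}
--- is correct and is exactly the reduction underlying the paper's proof. The gap is in how you propose to prove this identity. The paper's argument has three inputs: the Field--Lawler renormalization $\blmpairr(\gamma_1,\gamma_2) = \lim_{R\to\infty}(\blmpair_{R\,\disk}(\gamma_1,\gamma_2) - \log\log R)$; Dub\'edat's finite-domain identity (Proposition~2.1 of \cite{dubedat_partition_functions_couplings}), which expresses the \emph{finite} mass $\blmpair_{R\,\disk}(\gamma_1,\gamma_2)$ of loops in $R\,\disk$ hitting both curves as an explicit ratio of zeta-determinants of the pieces cut by one curve versus both curves; and Lemma~\ref{lemma:det_lim}, which handles the $R\to\infty$ limit on the determinant side. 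Your proposal contains no substitute for the second input, which is the actual bridge between Brownian loop measure and determinants. You attribute that bridge to Lemma~\ref{lemma:det_lim}, but that lemma is a statement purely about determinants (a renormalization at infinity of determinant ratios); it says nothing about loop masses and cannot ``identify each renormalized crossing mass with the logarithm of the associated determinant ratio.''

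The inclusion--exclusion scheme you use in its place does not close this gap, and one of its intermediate claims is false as stated: the correspondence $\mu(A_i) \leftrightarrow \log\bigl(\detz{\restrict{g}{D_i}}\,\detz{\restrict{g}{D_i^*}}/\detz{\restrict{g}{\hat\C}}\bigr)$ ``up to loop-independent counterterms'' cannot hold, because the small-loop divergence of $\mu(A_i)$ is proportional to the length of $\gamma_i$ (mass $\sim \mathrm{len}(\gamma_i)\int_0 t^{-3/2}\,\dd t$ for short-duration loops rooted near the curve), hence is loop-\emph{dependent}; no loop-independent subtraction renders the individual terms finite. Only the full combination $\mu(A_1)+\mu(A_2)-\mu(A_1\cup A_2)$ is renormalizable, and establishing that its finite part equals the determinant ratio is precisely the content of the cutoff analysis you defer as ``the main obstacle \dots controlling the coincident short-distance divergences on the two sides by the same local collar data.'' That deferred step is the entire analytic substance of the statement; without it (or without citing the known finite-domain identity, as the paper does), the argument is a heuristic rather than a proof. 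Your concluding discussion of metric-independence via the Polyakov--Alvarez anomaly is essentially sound (though the bulk terms cancel simply by additivity of the integrals over $D_1$, $A$, $D_2$, not by a Gauss--Bonnet count), but it is secondary to the missing bridge.
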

\begin{proof}
    Using conformal invariance, without loss of generality, assume that $0 \in D_1$ and $\infty \in D_2$.
    By Theorem~4.26 in \cite{field_lawler}, Proposition~2.1 in \cite{dubedat_partition_functions_couplings} and Lemma~\ref{lemma:det_lim}, we have
    \begin{equation}
    \label{eq:blm_detz}
    \begin{aligned}
        &\blmpairr(\gamma_1, \gamma_2)
        = \lim_{R \to \infty} \big(
            \blmpair_{R \, \disk}(\gamma_1, \gamma_2) - \log \log R
        \big)
        \\ = \; &
        \lim_{R \to \infty} \Bigg(
        \log \frac{
            \detz{\restrict{g}{D_1}}
            \detz{\restrict{g}{A \cup D_2 \setminus R \, \disk^*}}
            \detz{\restrict{g}{D_2 \setminus R \, \disk^*}}
            \detz{\restrict{g}{D_1 \cup A}}
        }{
            \detz{\restrict{g}{R \, \disk}}
            \detz{\restrict{g}{D_1}}
            \detz{\restrict{g}{A}}
            \detz{\restrict{g}{D_2 \setminus R \, \disk^*}}
        }
        - \log \log R
        \Bigg)
        \\ = \; &
        \lim_{R \to \infty} \Bigg(
        \log \frac{
            \detz{\restrict{g}{A \cup D_2 \setminus R \, \disk^*}}
            \detz{\restrict{g}{D_1 \cup A}}
        }{
            \detz{\restrict{g}{R \, \disk}}
            \detz{\restrict{g}{A}}
        }
        - \log \log R
        \Bigg)
        \\ = \; &
        \log \frac{
            \detz{\restrict{g}{A \cup D_2}}
            \detz{\restrict{g}{D_1 \cup A}}
        }{
            \detz{\restrict{g}{\hat \C}}
            \detz{\restrict{g}{A}}
        }
        + \const.
    \end{aligned}
    \end{equation}
    The statement now follows from expressions~ \eqref{eq:lenergy_one} and~\eqref{eq:lpot_one_detz} in Appendix~\ref{appendix:background} of the Loewner potential of a single loop in terms of zeta-regularized determinants of Laplacians.
\end{proof}

\subsection{Pre-Schwarzian formula for the Loewner potential}
\label{section:pre_schwarzians}

In this section, we prove that the two-loop Loewner potential~\eqref{eq:lpot_def} of smooth loops $\gamma_1$, $\gamma_2$ can be expanded around the energy of two circles with relative modulus $\tau$ equal to that of the annulus $A$ enclosed by $\gamma_1$ and $\gamma_2$ and a term which integrates the pre-Schwarzians $\preschwarzian{f} = f''/f'$ of the conformal maps in Figure~\ref{fig:setup}.
This formula is analogous to an expression of the one-loop Loewner potential found in \cite{wang_equivalent}, relating it to the universal Liouville action, which is a Kähler potential on universal Teichmüller space \cite{takhtajan_teo}.

\begin{theorem}
\label{thm:lpot_liouville}
    For two non-intersecting simple smooth loops $\gamma_1$, $\gamma_2$ in $\hat \C$ with the domains and conformal maps set up as in Figure~\ref{fig:setup}, the two-loop Loewner potential~\eqref{eq:lpot_def} equals
    \begin{equation}
    \label{eq:lpot_liouville_thm}
    \begin{aligned}
        \lpot{\hat \C, 2}(\gamma_1, \gamma_2)
        = \; &
        \lpot{\hat \C, 2}(e^{-2\pi \tau} S^1, S^1)
        \\ &+ \frac{1}{12\pi} \bigg(
            \iint\limits_{e^{-2 \pi \tau} \disk} \big|
            \preschwarzian{f_1}
            \big|^2
            \;
            \dz
            +
            \iint\limits_{\annulus_\tau} \big|
            \preschwarzian{f_A}
            \big|^2
            \;
            \dz
            +
            \iint\limits_{\disk^*} \big|
            \preschwarzian{f_2}
            \big|^2
            \;
            \dz
        \bigg)
        \\ &- \frac{1}{3} \log \left|
            \frac{f_2'(\infty)}{f_1'(0)}
        \right|
        .
    \end{aligned}
    \end{equation}
\end{theorem}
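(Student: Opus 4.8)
The plan is to build directly on the zeta-regularized determinant expression for $\lpot{\hat \C, 2}$ established in Proposition~\ref{prop:lpot_detz}, combined with the Polyakov--Alvarez conformal anomaly formula. Concretely, I would compare the configuration $(\gamma_1, \gamma_2)$ with the reference configuration of circles $(e^{-2\pi\tau}S^1, S^1)$ having the same modulus $\tau$. Since the constant in Proposition~\ref{prop:lpot_detz} is independent of the loops and of the metric $g$, subtracting the two determinant formulas gives
\[
\lpot{\hat \C, 2}(\gamma_1, \gamma_2) - \lpot{\hat \C, 2}(e^{-2\pi\tau}S^1, S^1)
= \log \frac{
\detz{\restrict{g}{e^{-2\pi\tau}\disk}}
\detz{\restrict{g}{\annulus_\tau}}
\detz{\restrict{g}{\disk^*}}
}{
\detz{\restrict{g}{D_1}}
\detz{\restrict{g}{A}}
\detz{\restrict{g}{D_2}}
},
\]
because the sphere determinant $\detz{\restrict{g}{\hat \C}}$ and the constant cancel. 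Thus the whole problem reduces to evaluating, for each of the three pieces, the ratio of determinants of the same metric on the standard domain and on its image under the uniformizing map.

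For each piece I would use that the uniformizing map $f_1$, $f_A$, or $f_2$ is a conformal isometry between the standard domain carrying the pulled-back metric and the curved domain carrying $g$; hence by diffeomorphism invariance $\detz{\restrict{g}{D_1}} = \detz{(f_1^* g)\vert_{e^{-2\pi\tau}\disk}}$, and similarly for $A$ and $D_2$. Writing $f_1^* g = e^{2\phi_1} g$ on $e^{-2\pi\tau}\disk$ with $\phi_1 = \log|f_1'| + \tfrac12 \log(\rho\circ f_1 / \rho)$, where $g = \rho\, \flatmetric$ in the standard coordinate, the Polyakov--Alvarez formula turns each determinant ratio into the Dirichlet energy of $\phi_1$ together with curvature and boundary integrals. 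The point producing the pre-Schwarzians is the harmonicity of $\log|f'| = \operatorname{Re}\log f'$ and the identity $|\nabla \log|f'||^2 = |f''/f'|^2 = |\preschwarzian{f}|^2$; choosing $g$ flat on a large ball containing $D_1$, $A$ and the finite part of $D_2$ kills the $\rho$-dependent part on those pieces and leaves exactly $\tfrac{1}{12\pi}\iint |\preschwarzian{f}|^2 \, \dz$ on each of $e^{-2\pi\tau}\disk$, $\annulus_\tau$ and $\disk^*$, matching the three integrals in the statement.

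It then remains to account for the boundary and normalization terms. After integrating the harmonic Dirichlet energies by parts, each pre-Schwarzian integral produces boundary integrals over $\gamma_1$ and $\gamma_2$, which must be combined with the geodesic-curvature and normal-derivative boundary terms of the Polyakov--Alvarez formula coming from the two sides of each interface. I expect these to cancel against the corresponding contributions from the reference configuration, leaving only the contributions localized at the two marked points $0$ and $\infty$. As in the one-loop case of \cite{wang_equivalent, takhtajan_teo}, the regularization of the determinants at these marked points identifies $\log|f_1'(0)|$ and $\log|f_2'(\infty)|$ as logarithms of conformal radii, and assembling them yields the term $-\tfrac13 \log |f_2'(\infty)/f_1'(0)|$ with the coefficient dictated by the $\tfrac{1}{12\pi}$ normalization.

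The main obstacle is the annulus piece. Unlike the one-loop situation in \cite{wang_equivalent}, the middle domain $A$ has two boundary components and nontrivial topology, so $\detz{\restrict{g}{\annulus_\tau}}$ carries an explicit $\tau$-dependence (of the type computed in \cite{weisberger}), the harmonic conjugate of $\log|f_A'|$ may wind around the core, and the Polyakov--Alvarez boundary terms appear on both $\gamma_1$ and $\gamma_2$. The delicate bookkeeping is to show that all genuinely $\tau$-dependent and metric-dependent pieces attached to $A$ are already absorbed into the reference term $\lpot{\hat \C, 2}(e^{-2\pi\tau}S^1, S^1)$, so that only the pre-Schwarzian integral $\tfrac{1}{12\pi}\iint_{\annulus_\tau}|\preschwarzian{f_A}|^2\,\dz$ survives from the annulus. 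As a consistency check, specializing to the circles makes $f_1$, $f_A$, $f_2$ the identity maps, so all three pre-Schwarzian integrals vanish and $f_1'(0) = f_2'(\infty) = 1$, reducing both sides to $\lpot{\hat \C, 2}(e^{-2\pi\tau}S^1, S^1)$.
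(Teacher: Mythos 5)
Your skeleton is the paper's own: subtract the circle configuration with the same modulus $\tau$ from the determinant formula of Proposition~\ref{prop:lpot_detz}, transport each determinant to its standard domain by diffeomorphism invariance, and apply the Polyakov--Alvarez formula to the conformal factors of the pulled-back metrics, with a background metric chosen flat away from $\infty$; your first display and the identity $\big|\nabla \log |f'|\big|^2 = |\preschwarzian{f}|^2$ are exactly right. However, two of your asserted mechanisms would fail as literally stated. On $\disk^*$, flatness of the metric cannot ``kill the $\rho$-dependent part'': no smooth metric on $\hat\C$ is flat near $\infty$, and both $\disk^*$ and its image $D_2$ contain $\infty$, so the conformal factor there is $\log|f_2'| + \psi\circ f_2 - \psi$ (writing the background metric as $e^{2\psi}\flatmetric$). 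Showing that every $\psi$-dependent term drops out of the Dirichlet energy and of the curvature integral $\iint R\,(\cdots)\,\dvol{}$ is a genuine computation, not a consequence of the choice of metric: the paper needs the conformal invariance of the Dirichlet energy to identify $\iint_{\disk^*}|\nabla\psi|^2\,\dz$ with $\iint_{\disk^*}|\nabla(\psi\circ f_2)|^2\,\dz$, plus Green's identity and the fact that $\psi$ vanishes near $S^1$, before the cross terms cancel and only $\frac{1}{12\pi}\iint_{\disk^*}|\preschwarzian{f_2}|^2\,\dz$ survives.

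Second, the boundary bookkeeping cannot be organized the way you describe. The reference configuration is subtracted at the level of determinants, before Polyakov--Alvarez is applied, and its uniformizing maps are identities; it therefore contributes no boundary terms for yours to ``cancel against.'' Likewise there is no ``regularization of the determinants at marked points''---these Dirichlet Laplacians have no marked points. What actually happens is local at each interface: on $e^{-2\pi\tau}S^1$ the normal-derivative terms coming from $f_1$ and from $f_A$ cancel \emph{each other} via the transformation law $k_{e^{2\sigma}h} = e^{-\sigma}(k_h + N_h\sigma)$, because both reparametrize to the same integral $\int_{\gamma_1} k_h\,\dboundary{h}$ (similarly on $S^1$ for $f_A$ and $f_2$); the term $-\frac{1}{3}\log|f_2'(\infty)/f_1'(0)|$ then arises from the mean-value property of the harmonic functions $\log|f_1'|$ and $\log|f_2'|$ applied to the remaining boundary curvature integrals; and the $\log|f_A'|$ boundary integrals over the two circles cancel each other by a contour-integral argument. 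Incidentally, your diagnosis of the annulus as the main obstacle is inverted: since the metric is flat on both $\annulus_\tau$ and $A$, that piece is the immediate one, and all $\tau$-dependence sits in the reference term by construction; the delicate points are precisely the $\disk^*$ computation and the interface cancellations above.
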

\begin{proof}
    Fix a metric $h = e^{2 \psi} \flatmetric$, where $\psi$ has any smooth cut-off such that
    \begin{equation}
        \psi(z) = \begin{cases}
            \frac{1}{2} \log\left(
                \frac{4}{(1 + |z|^2)^2}
            \right)
            & \text{near $\infty$}
            \\
            0
            & \text{near $A \cup D_1 \cup \disk$}.
        \end{cases}
    \end{equation}
    This way, $h$ is the flat metric on most of the sphere, with a round cap at $\infty$.
    Now we take the pullback of $h$ along the three conformal maps $f_1$, $f_A$, $f_2$, finding
    \begin{alignat}{5}
        \label{eq:g1}
        &g_1
        &&= f_1^*(\restrict{h}{D_1})
        &&= e^{2 (\psi\circ f_1 + \log |f_1'|)}
        &&\restrict{\flatmetric}{e^{-2\pi \tau}\disk}
        &&= e^{2 \log |f_1'|}
        \, \restrict{\flatmetric}{e^{-2\pi \tau}\disk},
        \\
        \label{eq:gA}
        &g_A
        &&= f_A^*(\restrict{h}{A})
        &&= e^{2 (\psi\circ f_A + \log |f_A'|)}
        &&\restrict{\flatmetric}{\annulus_\tau}
        &&= e^{2 \log |f_A'|}
        \, \restrict{\flatmetric}{\annulus_\tau},
        \\
        \label{eq:g2}
        &g_2
        &&= f_2^*(\restrict{h}{D_2})
        &&= e^{2 (\psi\circ f_2 + \log |f_2'|)} 
        &&\restrict{\flatmetric}{\disk^*}
        &&= e^{2 (\psi\circ f_2 - \psi + \log |f_2'|)}
        \, \restrict{h}{\disk^*}.
    \end{alignat}
    The three metrics altogether form a metric on $\hat \C$ that, however, may be discontinuous across $\gamma_1$ and $\gamma_2$.
    Using the expression for the Loewner potential in terms of zeta-regularized determinants of the Laplacian from Proposition~\ref{prop:lpot_detz}, we find by diffeomorphism invariance of the determinants that,
    \begin{equation}
    \begin{aligned}
        \lpot{\hat \C, 2}(\gamma_1, \gamma_2)
        -
        \lpot{\hat \C, 2}(e^{-2\pi \tau} S^1, S^1)
        &=
        \log \frac{
            \detz{\restrict{h}{e^{-2\pi \tau}\disk}}
            \detz{\restrict{h}{\annulus_\tau}}
            \detz{\restrict{h}{\disk^*}}
        }{
            \detz{\restrict{h}{D_1}}
            \detz{\restrict{h}{A}}
            \detz{\restrict{h}{D_2}}
        }
        \\ &=
        \log \frac{
            \detz{\restrict{\flatmetric}{e^{-2\pi \tau}\disk}}
            \detz{\restrict{\flatmetric}{\annulus_\tau}}
            \detz{\restrict{h}{\disk^*}}
        }{
            \detz{\restrict{g_1}{e^{-2\pi \tau}\disk}}
            \detz{\restrict{g_A}{\annulus_\tau}}
            \detz{\restrict{g_2}{\disk^*}}
        }
        .
    \end{aligned}
    \end{equation}
    Thus, we anticipate to obtain Equation~\eqref{eq:lpot_liouville_thm} by the application of the Polyakov--Alvarez anomaly formula~\eqref{eq:polyakov_alvarez} in Appendix~\ref{appendix:zeta} to the conformal factor in Equations~\eqref{eq:g1},~\eqref{eq:gA}, and~\eqref{eq:g2}.
    The full expression is
    \begin{align}
        & \notag
        \lpot{\hat \C, 2}(\gamma_1, \gamma_2)
        -
        \lpot{\hat \C, 2}(e^{-2\pi \tau} S^1, S^1)
        \\ =\; & \notag
        -\paformula\big(\log |f_1'|, \restrict{\flatmetric}{e^{2\pi \tau} \disk}\big)
        - 
        \paformula\big(\log |f_A'|, \restrict{\flatmetric}{e^{2\pi \tau} \disk}\big)
        - 
        \paformula\big(\log |f_2'|, \restrict{h}{e^{2\pi \tau} \disk}\big)
        \\ =\; &
        \label{eq:dirichlet_energy_terms}
        \frac{1}{6 \pi} \bigg(
            \iint_{e^{-2\pi \tau} \disk}
            \frac{1}{2} \big|\nabla \log |f_1'| \big|^2 
            \; \dz
            + \iint_{\annulus_\tau}
            \frac{1}{2} \big|\nabla \log |f_A'| \big|^2 
            \; \dz
        \\ &
        \label{eq:dirichlet_energy_term2}
        \phantom{\frac{1}{6 \pi} \bigg(}
            + \iint_{\disk^*}
            \frac{1}{2} \big|\nabla_h (\log |f_2'| + \psi \circ f_2 - \psi) \big|^2 
            \; \dvol{h}
        \\ &
        \label{eq:curvature_term}
        \phantom{\frac{1}{6 \pi} \bigg(}
            + \iint_{\disk^*}
            R_h \;
            (\log |f_2'| + \psi \circ f_2 - \psi)
            \; \dvol{h}
        \\ &
        \label{eq:N1}
        \phantom{\frac{1}{6 \pi} \bigg(}
            + \int_{e^{-2\pi \tau} S^1} \Big(
                e^{2 \pi \tau} (\log |f_1'| - \log |f_A'|) + \frac{3}{2} N (\log |f_1'| - \log |f_A'|)
            \Big) \; \dboundary{}
        \\ &
        \label{eq:N2}
        \phantom{\frac{1}{6 \pi} \bigg(}
            + \int_{S^1} \Big(
                (\log |f_A'| - \log |f_2'|) + \frac{3}{2} N (\log |f_A'| - \log |f_2'|)
            \Big) \; \dboundary{}
        \bigg)
    \end{align}
        
    Generally, the pre-Schwarzians appear because for a holomorphic function $f$,
    \begin{equation}
        \big|\nabla \log |f'|\big|^2 = \left|
            \preschwarzian{f}
        \right|^2
        .
    \end{equation}
    For the first term~\eqref{eq:dirichlet_energy_terms} involving $f_1$ and $f_A$, this is immediate, since the underlying metric is indeed the flat one.
    In the integrals over $\disk^*$, we have to deal with the non-flat metric~$h$.
    The Dirichlet energy in the term~\eqref{eq:dirichlet_energy_term2} is conformally invariant and the Gaussian curvature in the term~\eqref{eq:curvature_term} is $R_h = e^{- 2 \psi} \Delta \psi$ and the volume form is $\dvol{h} = e^{2 \psi} \dz$.
    Since these conformal factors cancel out, the terms~\eqref{eq:dirichlet_energy_term2} and~\eqref{eq:curvature_term} combine to
    \begin{align}
    \notag
        &
        \frac{1}{6\pi}
            \iint_{\disk^*} \bigg(
            \frac{1}{2} \big|\nabla (\log |f_2'| + \psi \circ f_2 - \psi) \big|^2 
            + (\Delta \psi) \;
            (\log |f_2'| + \psi \circ f_2 - \psi)
            \bigg)
            \; \dz
        \\ =\; &
        \label{eq:bulk_terms}
        \frac{1}{6\pi}
            \iint_{\disk^*} \bigg(
            \frac{1}{2} \big|\nabla (\log |f_2'| + \psi \circ f_2 - \psi) \big|^2 
            + \Big\langle
                \nabla \psi,
                \nabla (\log |f_2'| + \psi \circ f_2 - \psi)
            \Big\rangle
            \bigg)
            \; \dz
            ,
    \end{align}
    where the boundary term in the application of Green's first identity vanishes since $\psi = 0$ in a neighborhood of $S^1$.
    By reparametrization and then by conformal invariance, we have
    \begin{equation}
        \iint_{\disk^*}
        \frac{1}{2} \big|\nabla \psi \big|^2 
        \; \dz
        =
        \iint_{D_2}
        \frac{1}{2} \big|\nabla_{g_2} (\psi \circ f_2) \big|_{g_2}^2 
        \; \dvol{g_2}
        =
        \iint_{D_2}
        \frac{1}{2} \big|\nabla (\psi \circ f_2) \big|^2 
        \; \dz
    \end{equation}
    and we can change the domain of integration back to $\disk^*$ since the support of $\psi$ is contained in $\disk^* \cap D_2$.
    Thus, by expanding the expression~\eqref{eq:bulk_terms}, we find that most of the terms cancel out except for
    \begin{equation}
    \text{\eqref{eq:bulk_terms}} = 
        \frac{1}{6\pi}
        \iint_{\disk^*}
        \frac{1}{2} \big|\nabla \log |f_2'| \big|^2 
        \; \dz
        =
        \frac{1}{12\pi}
        \iint_{\disk^*}
        \big|\preschwarzian{f_2'} \big|^2 
        \; \dz
        .
    \end{equation}
    
    For the terms~\eqref{eq:N1} and~\eqref{eq:N2} involving the normal derivative $N$, observe that it is related to a conformal change $e^{2\sigma} h$ of the geodesic curvature by $k_g = e^{-\sigma} (k_h + N_h \sigma)$ (see, {e.g.\ \cite[Appendix~A]{wang_equivalent}}).
    Under the same conformal change, $\dboundary{g} = e^{\sigma} \dboundary{h}$. Finally, we find reparametrizations of $\int_{\gamma_1} k_h \dboundary{h}$ which cancel each other out,
    \begin{equation}
    \begin{aligned}
        &
        \int_{e^{-2\pi \tau} S^1}
            N (\log |f_1'| - \log |f_A'|)
        \dboundary{}
        \\ \; =&
        \int_{e^{-2\pi \tau} S^1} \bigg(
            (e^{\log |f_1'|} k_{g_1} - k_h)
            -
            (e^{\log |f_A'|} k_{g_A} - k_h)
        \bigg) \dboundary{h}
        \\ \; =&
        \int_{e^{-2\pi \tau} S^1}
            k_{g_1}\; e^{\log |f_1'|}\dboundary{h}
        -
        \int_{e^{-2\pi \tau} S^1}
            k_{g_A}\; e^{\log |f_A'|}\dboundary{h}
        = 0,
    \end{aligned}
    \end{equation}    
    and analogously for the integral over $S^1$.
    
    By the mean-value property of harmonic functions, we have
    \begin{equation}
        \int_{e^{-2\pi \tau} S^1}
        e^{2\pi \tau} \log |f_1'| \; \dboundary{}
        - \int_{S^1}
        \log |f_2'| \; \dboundary{}
        =
        - 2 \pi \log \left|
            \frac{f_2'(\infty)}{f_1'(0)}
        \right|
        .
    \end{equation}
    Finally, the remaining boundary integrals involving $\log|f_A'| = \Re(\log f_A')$ cancel out as the real part of the contour integrals
    \begin{equation}
        \int_{S^1} \log f_A' \;\dd z
        - \int_{e^{-2\pi \tau} S^1} \log f_A' \;\dd z
        = 0.
    \end{equation}
\end{proof}

\subsection{Loewner--Kufarev energy and a multiple Grunsky inequality}
\label{section:foliation_energy}

In this section, we find that the two-loop Loewner potential~\eqref{eq:lpot_def} of smooth loops $\gamma_1$,~$\gamma_2$ is related to the Loewner--Kufarev energy of a foliation obtained from the setup in Figure~\ref{fig:setup} by taking equipotential curves --- pushforwards of the foliation by circles along the respective uniformizing maps; see also Figure~\ref{fig:foliation}.
Our results are similar to those in \cite{viklund_wang_lk_energy} for the one-loop Loewner energy.
However, our setup requires a generalization of the Grunsky inequality, which we call a multiple Grunsky inequality because compared to usual Grunsky inequality it considers additional nested annuli between the disjoint simply connected domains.
Even though the proof is given only for the case of a single annulus, it has a direct generalization to any finite number of annuli --- see Remark~\ref{remark:more_annuli}.

Consider the generalized Grunsky inequality in \cite{Hu72} and also \cite[Theorem~4.1]{Pom75} of a pair Riemann mappings $f_1 : \disk \to D_1$ and $f_2 : \disk^* \to D_2$ such that $f_1(0) = 0$,
$f_2(\infty) = \infty$,
$f_2'(\infty) = 1$,
and $D_1 \cap D_2 = \emptyset$ and their Grunsky coefficients $b_{k, l}$ for $k, l \in \Z$ defined by the two-variable expansion of difference quotients
\begin{align}
\label{eq:grunsky2}
    \log \frac{f_2(z) - f_2(w)}{z - w}
    &= - \sum_{k=1}^\infty \sum_{l=1}^\infty
        b_{k,l} z^{-k}w^{-l},
        &&z, w \in \disk^*,
    \\
\label{eq:grunsky12}
    \log \frac{f_2(w) - f_1(z)}{w}
    &= - \sum_{k=0}^\infty \sum_{l=1}^\infty
        b_{-k, l} z^{k} w^{-l},
        &&z \in \disk^*, w \in \disk,
    \\
\label{eq:grunsky1}
    \log \frac{f_1(z) - f_1(w)}{z-w}
    &= - \sum_{k=0}^\infty \sum_{l=0}^\infty
        b_{-k, -l} z^{k} w^{l},
        &&z, w \in \disk
\end{align}
and $b_{k, l} = b_{l, k}$.
For any constants $\lambda_{-m}, \dots, \lambda_m \in \C$, the inequality reads,
\begin{equation}
\label{eq:grunsky_inequality_coefficients}
    \sum_{k \in \Z} |k| \; \bigg| \sum_{|l| \leq m} b_{k, l} \lambda_l \bigg|^2
    \leq
    \sum_{0 < |k| \leq m} \frac{|\lambda_k|^2}{k} + 2 \Re\bigg(\bar \lambda_0 \sum_{|l| \leq m} b_{0, l} \lambda_l \bigg),
\end{equation}
where equality holds if and only if $\hat \C \setminus (D_1 \cup D_2)$ is of measure zero.

The multiple Grunsky inequality proven below resembles the special case of \eqref{eq:grunsky_inequality_coefficients} in
\cite[Chapter~2, Remark~2.2]{takhtajan_teo}
with just one nonzero constant $\lambda_0 = 1$, that is, $m = 0$,
\begin{equation}
    \sum_{k \in \Z} |k| \; |b_{k, 0}|^2 \leq 2 \Re b_{0, 0}
    =
    2 \log \bigg|
        \frac{f_2'(\infty)}{f_1'(0)}
    \bigg|
    ,
\end{equation}
By taking $z = 0$ in Equation~\eqref{eq:grunsky12} and $w = 0$ Equation~\eqref{eq:grunsky1} and direct computation,
\begin{equation}
\label{eq:series_bound}
\begin{aligned}
    \iint\limits_{\disk^*}
    \bigg| 
        \frac{f_2'(z)}{f_2(z)} - \frac{1}{z}
    \bigg|^2
    \dz
    &=
    \iint\limits_{\disk^*}
    \bigg| 
        \bigg(
            \log \frac{f_2(z)}{z}
        \bigg)'
    \bigg|^2
    \dz
    = \pi \sum_{k = 1}^\infty k |b_{k , 0}|^2
    ,
    \\
    \iint\limits_{\disk}
    \bigg| 
        \frac{f_1'(z)}{f_1(z)} - \frac{1}{z}
    \bigg|^2
    \dz
    &=
    \iint\limits_{\disk}
    \bigg| 
        \bigg(
            \log \frac{f_1(z)}{z}
        \bigg)'
    \bigg|^2
    \dz
    = \pi \sum_{k = 0}^\infty k |b_{-k , 0}|^2
    .
\end{aligned}
\end{equation}
The special case of~\eqref{eq:grunsky_inequality_coefficients} becomes
\begin{equation}
    \iint\limits_{\disk}
    \bigg| 
        \frac{f_1'(z)}{f_1(z)} - \frac{1}{z}
    \bigg|^2
    \dz
    +
    \iint\limits_{\disk}
    \bigg| 
        \frac{f_2'(z)}{f_2(z)} - \frac{1}{z}
    \bigg|^2
    \dz
    \leq
    2\pi \log \bigg|
        \frac{f_2'(\infty)}{f_1'(0)}
    \bigg|
    .
\end{equation}
For the multiple Grunsky equality, replace $f_1 : \disk \to D_1$ by $f_1 : e^{2\pi \tau} \disk \to D_1$, $\tau > 0$ and introduce a third univalent function $f_A : \annulus_\tau \to \hat \C \setminus (D_1 \cup D_2)$. The setup is as in Figure~\ref{fig:setup} except that the complement of $D_1$, $D_2$, and $A = f_A(\annulus_\tau)$ may not be of measure zero --- see Figure~\ref{fig:grunsky_proof}.
The proof follows that of the generalized Grunsky inequality~\eqref{eq:grunsky_inequality_coefficients} in \cite[Theorem~4.1]{Pom75} by adding additional contour integrals and series expansions.
\begin{proposition}[Multiple Grunsky inequality]
    \label{prop:multiple_grunsky}
    Consider univalent functions
    \begin{equation}
        f_1 : e^{-2\pi \tau} \disk \to D_1, \qquad
        f_A : \annulus_\tau \to A, \qquad
        f_2 : \disk^* \to D_2,
    \end{equation}
    such that $D_1$, $A$ and $D_2$ are disjoint and $f_1(0) = 0$, $f_2(\infty) = \infty$. Then,
    \begin{equation}
        \iint\limits_{e^{-2\pi\tau} \disk}
        \bigg| 
            \frac{f_1'(z)}{f_1(z)} - \frac{1}{z}
        \bigg|^2
        \dz
        +
        \iint\limits_{\annulus_\tau}
        \bigg| 
            \frac{f_A'(z)}{f_A(z)} - \frac{1}{z}
        \bigg|^2
        \dz
        +
        \iint\limits_{\disk}
        \bigg| 
            \frac{f_2'(z)}{f_2(z)} - \frac{1}{z}
        \bigg|^2
        \dz
        \leq
        2\pi \log \bigg|
            \frac{f_2'(\infty)}{f_1'(0)}
        \bigg|
        .
    \end{equation}
    Equality holds if and only if $\hat \C \setminus (D_1 \cup A \cup D_2)$ is of measure zero.
\end{proposition}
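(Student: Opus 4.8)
The plan is to convert the three Dirichlet-type integrals into a single application of the area theorem in logarithmic coordinates. In this picture the right-hand side $2\pi\log|f_2'(\infty)/f_1'(0)|$ emerges as the ``translation defect'' between the behaviour of the uniformizing maps at $0$ and at $\infty$, while the slack in the inequality is precisely the area of the complement $\hat \C\setminus(D_1\cup A\cup D_2)$; this is in the spirit of \cite[Theorem~4.1]{Pom75} but organized so that only the $\lambda_0=1$ data enter.

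First I would record that each integrand is a logarithmic derivative. Since $0\in D_1$ and $\infty\in D_2$ are disjoint from $A$, the functions $f_1/z$, $f_A/z$, $f_2/z$ are holomorphic and nowhere zero on their domains, so I set $u_1=\log(f_1/z)$, $u_A=\log(f_A/z)$, $u_2=\log(f_2/z)$, whence $u_i'=f_i'/f_i-1/z$ and the left-hand side is $\sum_i\iint_{\Omega_i}|u_i'|^2\,\dz$ over $\Omega_1=e^{-2\pi\tau}\disk$, $\Omega_A=\annulus_\tau$, $\Omega_2=\disk^*$. A winding-number check shows $u_A$ is single-valued: the image under $f_A$ of a circle in $\annulus_\tau$ separates $0$ from $\infty$, so $\tfrac{1}{2\pi i}\oint u_A'\,\dd z=1-1=0$; the maps $u_1,u_2$ are single-valued on their simply connected domains, with $u_1\to\log f_1'(0)$ near $0$ and $u_2\to\log f_2'(\infty)$ near $\infty$.

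Next I would pass to the cylinder coordinate $\zeta=\log z$ and consider $H_i(\zeta)=\log f_i(e^\zeta)=\zeta+u_i$. Since $D_1,A,D_2$ are disjoint, the maps $H_1,H_A,H_2$ are univalent with disjoint images $\log D_1,\log A,\log D_2$ in the cylinder $\C/2\pi i\Z$, defined respectively on the left half-cylinder, the strip of width $2\pi\tau$, and the right half-cylinder. The change of variables $\zeta=\log z$ gives, for each $i$,
\[
    \iint_{\log\Omega_i}\big(|H_i'|^2-1\big)\,|\dd\zeta|^2
    =
    \iint_{\Omega_i}|u_i'|^2\,\dz
    + 2\Re\iint_{\Omega_i}\frac{u_i'}{\bar z}\,\dz,
\]
and the cross term vanishes for every $i$ because the residue (the coefficient of $z^{-1}$) of the exact differential $u_i'$ is zero -- this is exactly where single-valuedness of $u_i$ is used. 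Hence $\sum_i\iint_{\log\Omega_i}(|H_i'|^2-1)\,|\dd\zeta|^2$ equals the left-hand side.

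Finally I would apply the area theorem in the cylinder. Truncating to $\{-M<\Re\zeta<M\}$, the three univalent pieces have disjoint images that together fill the cylindrical strip from $\Re\zeta\approx-M+\log|f_1'(0)|$ to $\Re\zeta\approx M+\log|f_2'(\infty)|$ with the image of the complement deleted. Comparing the total image area $\sum_i\iint|H_i'|^2\,|\dd\zeta|^2$ with the area $4\pi M$ of the truncated domain and letting $M\to\infty$ yields
\[
    \sum_i\iint_{\log\Omega_i}\big(|H_i'|^2-1\big)\,|\dd\zeta|^2
    =
    2\pi\log\left|\frac{f_2'(\infty)}{f_1'(0)}\right|
    - \iint_{\hat \C\setminus(D_1\cup A\cup D_2)}\frac{\dw}{|w|^2}.
\]
Combining with the previous step exhibits the left-hand side of the Proposition as $2\pi\log|f_2'(\infty)/f_1'(0)|$ minus the weighted area of the complement, which is nonnegative and vanishes if and only if that complement has measure zero; this gives both the inequality and the equality case. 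The main obstacle is the bookkeeping at the two ends of the cylinder: I must justify the truncation limit and verify that the asymptotic shifts of $H_1$ and $H_2$ contribute exactly $\log|f_1'(0)|$ and $\log|f_2'(\infty)|$, with no additional length leaking out at $0$ and $\infty$. In the language of \eqref{eq:series_bound} this is the step where the Grunsky series are summed against the area expansion, now with the extra Laurent contributions of $f_A$ on $\annulus_\tau$ included.
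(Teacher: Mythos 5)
Your argument is correct in substance, but it is a genuinely different proof from the paper's. The paper follows Pommerenke's coefficient method: it starts from the positivity of $\iint_F \dz/|z|^2$ with $F = \hat \C\setminus(D_1\cup A\cup D_2)$, converts this Dirichlet integral by the analytic Green's formula into contour integrals of $\log\bar z/z$ over slightly displaced image circles joined along branch-cut segments (Figure~\ref{fig:grunsky_proof}), expands $\log f_1$, $\log f_A$, $\log f_2$ into Laurent series with coefficients $b_{-k,0}$, $\beta_k$, $b_{k,0}$, evaluates each loop integral termwise, and matches the surviving series against the Parseval expansion~\eqref{eq:as_series} of the left-hand side. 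You bypass the series and the branch-cut bookkeeping entirely: after the residue argument showing the cross terms $2\Re\iint_{\Omega_i} \frac{u_i'}{\bar z}\,\dz$ vanish (the same point at which the paper's expansions use single-valuedness of $\log(f_i/z)$), you read off each summand as image-area-minus-domain-area of the maps $H_i$ on the logarithmic cylinder and get the exact identity
$\mathrm{LHS} = 2\pi\log|f_2'(\infty)/f_1'(0)| - \iint_F \dw/|w|^2$
from a single global area count. This is more economical, it yields both implications of the equality statement at once (the paper's final sentence literally argues only the ``if'' direction), and it makes the generalization to several nested annuli (Remark~\ref{remark:more_annuli}) immediate. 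What the paper's route buys instead is the explicit coefficient identity~\eqref{eq:as_series}, which ties the proposition to the classical Grunsky inequality~\eqref{eq:grunsky_inequality_coefficients} and its literature.

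Two caveats, neither of which puts you behind the paper. First, your ``winding-number check'' that $f_A$ of a core circle separates $0$ from $\infty$ is not a consequence of the hypotheses as stated: disjointness alone does not imply it, and the proposition is in fact false without it. Take $f_1(z)=z/2$, $f_A(z)=z+2$, $f_2(z)=3z$: the three images are disjoint, the $f_1$- and $f_2$-terms vanish, and the $f_A$-term is at least $\tfrac{16}{9}\pi^2\tau$, which exceeds the right-hand side $2\pi\log 6$ once $\tau\geq 1$. So separation of $0$ from $\infty$ (the nested configuration of Figure~\ref{fig:grunsky_proof}) is an implicit hypothesis; the paper's proof relies on it just as silently, through its expansion of $\log f_A$ and its choice of contours, so this is a shared gap in the statement rather than a defect of your argument relative to the paper's. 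Second, the end-of-cylinder bookkeeping you flag is genuinely needed but routine: since $u_1$, $u_2$ are holomorphic at $0$ and $\infty$, the curves $H_1(\{\Re\zeta=-M\})$ and $H_2(\{\Re\zeta=M\})$ lie within $O(e^{-M})$ of round circles at $\Re w = -M+\log|f_1'(0)|$ and $\Re w = M+\log|f_2'(\infty)|$, so the truncated identities carry only an $O(e^{-M})$ error; one should also run the limit via monotone convergence on the nonnegative parts so that finiteness of the left-hand side is concluded rather than assumed. The paper's own termwise limit interchanges are glossed over at the same level of rigor.
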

\begin{figure}
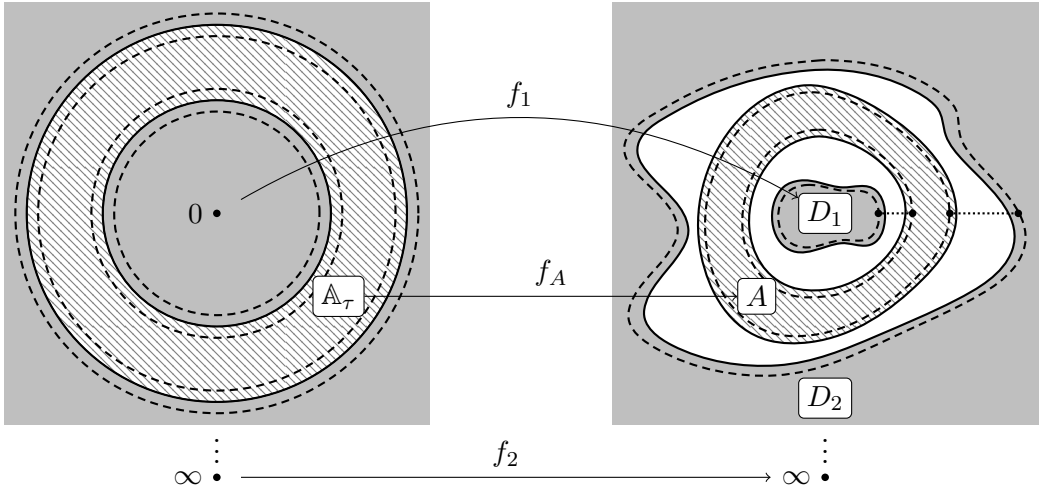

\centering
\includestandalone[]{figures/fig_grunsky}
\caption{
    The setup for the multiple Grunsky inequality, Proposition~\ref{prop:multiple_grunsky}.
    In contrast to Figure~\ref{fig:setup}, the complement of $D_1 \cup A \cup D_2$ may not be of measure $0$.
    The dashed lines on the left are circles $\varepsilon > 0$ away from $e^{-2\pi \tau}S^1$ and $S^1$ respectively. Their image --- the dashed lines on the right --- as well as the dotted lines, are part of the integration contour in the proof, see Equation~\eqref{eq:grunsky_contour}.
}
\label{fig:grunsky_proof}
\end{figure}
\begin{proof}
    The inequality will be obtained from the positivity of the Dirichlet energy of the function $\log z$ integrated in the complement ${F = \hat \C \setminus (D_1 \cup A \cup D_2)}$.
    We momentarily enlarge $F$ for some small $\varepsilon > 0$ to the interior of the curves, see Figure~\ref{fig:grunsky_proof},
    \begin{equation}
    \label{eq:grunsky_contour}
    \begin{aligned}
        C_1^\varepsilon &=
        - f_1\big((e^{-2\pi \tau} - \varepsilon)\; S^1\big)
        + T_1^+
        + f_A\big((e^{-2\pi \tau} + \varepsilon)\; S^1\big)
        - T_1^-,
        \\
        C_2^\varepsilon &=
        - f_A\big((1 - \varepsilon)\; S^1\big)
        + T_2^+
        + f_2\big((1 + \varepsilon)\; S^1\big)
        - T_2^-
        ,
    \end{aligned}
    \end{equation}
    where the signs stand for the reversal of the conventional orientation of the curves and $T_1^{\pm}$, $T_2^{\pm}$ are the left and right limits of segments connecting $f_1(e^{-2\pi \tau} - \varepsilon)$ to $f_A(e^{-2\pi\tau} + \varepsilon)$ and $f_A(1 - \varepsilon)$ to $f_2(1 + \varepsilon)$ respectively (The dotted lines in Figure~\ref{fig:grunsky_proof}).

    By the analytic Green's formula, see \cite[Theorem~2.2]{Pom75}, we have
    \begin{equation}
    \label{eq:dirichlet_grunsky}
        0 \leq \iint_{F} \frac{\dz}{|z|^2}
        = 
        \lim_{\varepsilon \to 0}
        \frac{1}{2 \ii} \int_{C_1^\varepsilon \cup C_2^\varepsilon} \frac{\log \bar z}{z} \; \dd z.
    \end{equation}
    Note that the left-hand side is independent of the choice of branch cut for $\log \bar z$, thus we can choose it such that it goes from $0$ to $\infty$ along $T_1^{\pm}$ and $T_2^{\pm}$ and does not intersect the curves elsewhere.
    The jump across the branch cut is $2\pi \ii$, thus
    \begin{equation}
    \label{eq:logjump}
        \frac{1}{2 \ii}
        \int_{{T_1^+ - T_1^- \cup T_2^+ - T_2^-}} \frac{\log \bar z}{z} \; \dd z
        =
        - \pi \int_{T_1^+ \cup T_2^+} \frac{\dd z}{z}
        =
        \pi
        \log \frac{
            f_A(e^{-2\pi \tau} + \varepsilon)
            f_2(1 + \varepsilon)
        }{
            f_1(e^{-2\pi \tau} - \varepsilon)
            f_A(1 - \varepsilon)
        }
        .
    \end{equation}
    Consider the expansions
    \begin{equation}
    \begin{aligned}
        \log f_1(z) &= \log f_1'(0) + \log z - \sum_{k = 1}^\infty b_{-k, 0} z^{k},
        &&z \in e^{-2\pi \tau} \disk,
        \\
        \log f_A(z) &= \log z - \sum_{k \in \Z} \beta_k z^{k},
        &&z \in \annulus_\tau,
        \\
        \log f_2(z) &= \log f_2'(\infty) + \log z - \sum_{k = 1}^\infty b_{k, 0} z^{-k},
        &&z \in \disk^*
        .
    \end{aligned}
    \end{equation}
    On the one hand, similar to Equation~\eqref{eq:series_bound}, we have
    \begin{equation}
    \label{eq:as_series}
    \begin{gathered}
        \iint\limits_{e^{-2\pi\tau} \disk}
        \bigg| 
            \frac{f_1'(z)}{f_1(z)} - \frac{1}{z}
        \bigg|^2
        \dz
        +
        \iint\limits_{\annulus_\tau}
        \bigg| 
            \frac{f_A'(z)}{f_A(z)} - \frac{1}{z}
        \bigg|^2
        \dz
        \iint\limits_{\disk^*}
        \bigg| 
            \frac{f_2'(z)}{f_2(z)} - \frac{1}{z}
        \bigg|^2
        \dz
        \\ =
        \pi \sum_{k = 1}^\infty k \; |b_{-k , 0}|^2 e^{-4 \pi k \tau}
        + \pi \sum_{k \in \Z} k |\beta_{k}|^2 (1 - e^{-4 \pi k \tau})
        + \pi \sum_{k = 1}^\infty k \; |b_{k , 0}|^2
        .
    \end{gathered}
    \end{equation}
    On the other hand, we evaluate the loops in the contour integral~\eqref{eq:grunsky_contour} as $\varepsilon \to 0$.
    First compute,
    \begin{equation}
    \begin{aligned}
        &\overline{\log f_1(z)} \; (\log f_1(z))' \dd z \big|_{z = e^{-2\pi \tau} e^{\ii t}}
        \\ = \; &
        \ii \Big(
            \overline{\log f'(0)} - 2 \pi \tau - \sum_{k = 1}^\infty \bar b_{-k, 0} e^{-2\pi \tau} e^{-\ii k t}  - \ii t
        \Big) \Big(
            1 - \sum_{k = 1}^\infty k b_{-k, 0} e^{-2\pi k \tau} e^{\ii k t}
        \Big)
        \dd t
    \end{aligned}
    \end{equation}
    and likewise for $f_A$ and $f_2$.
    After some computation, the respective integrals over loops in Equation~\eqref{eq:dirichlet_grunsky} become
    \begin{equation}
    \label{eq:grunsky_integral_loops}
    \begin{aligned}
        - \frac{1}{2\ii} \int_{f_1(e^{-2\pi \tau}\; S^1)} \frac{\log \bar z}{z} \; \dd z
        &=
        - \pi \Big(
        \overline{\log f_1'(0)}
        + \sum_{k = 1}^\infty k |b_{-k, 0}|^2 e^{-4 \pi k \tau}
        - 4 \pi \tau
        - 2 \ii
        \\ &\phantom{=
        - \pi \Big(}
        - \log f_1(e^{-2\pi \tau})
        + \log f_1'(0)
        \Big)
        \\
        \frac{1}{2\ii} \int_{f_A(e^{-2\pi \tau}\; S^1)} \frac{\log \bar z}{z} \; \dd z
        &=
        \pi \Big(
        \sum_{k \in \Z} k |\beta_k|^2 e^{-4 \pi k \tau}
        - 4 \pi \tau
        - 2 \ii
        - \log f_A(e^{-2 \pi \tau})
        \Big)
        \\
        - \frac{1}{2\ii} \int_{f_A(S^1)} \frac{\log \bar z}{z} \; \dd z
        &=
        - \pi \Big(
        \sum_{k \in \Z} k |\beta_k|^2
        - 2 \ii
        - \log f_A(1)
        \Big)
        \\
        \frac{1}{2\ii} \int_{f_2(S^1)} \frac{\log \bar z}{z} \; \dd z
        &=
        \pi \Big(
        \overline{\log f_2'(\infty)}
        - \sum_{k = 1}^\infty k |b_{k, 0}|^2
        - 2 \ii
        - \log f_2(1)
        + \log f_2'(\infty)
        \Big)
    \end{aligned}
    \end{equation}
    The right-hand side of the inequality~\eqref{eq:dirichlet_grunsky} equals the sum of Equations~\eqref{eq:grunsky_integral_loops} and Equation~\eqref{eq:logjump} with $\varepsilon = 0$. Since several terms cancel out, we find
    \begin{equation}
        0 \leq
        - \pi \sum_{k = 1}^\infty k |b_{-k, 0}|^2 e^{-4 \pi k \tau}
        - \pi \sum_{k \in \Z} k |\beta_{k}|^2 (1 - e^{-4 \pi k \tau})
        - \pi \sum_{k = 1}^\infty k |b_{k, 0}|^2
        + 2\pi \log \bigg|
            \frac{f_2'(\infty)}{f_1'(0)}
        \bigg|
        ,
    \end{equation}
    from which the result follows by comparing to Equation~\eqref{eq:as_series}.
    
    Moreover, equality holds if the Dirichlet energy~\eqref{eq:dirichlet_grunsky} vanishes, which is the case if and only if $H$ is of measure zero.
\end{proof}

\begin{remark}
    \label{remark:more_annuli}
    Proposition~\ref{prop:multiple_grunsky} can be further generalized to the case of any finite number of nested annuli replacing $f_A$. The proof is analogous by introducing further contours.
\end{remark}

We now turn to the relation of the two-loop Loewner potential~\eqref{eq:lpot_def} and the Loewner--Kufarev energy of the foliation $(\foliation_t)_{t \in \R}$ defined by
\begin{equation}
    \foliation_t= \begin{cases}
        f_2(e^{-2\pi t} S^1), & t \leq 0, \\
        f_A(e^{-2\pi t} S^1), & 0 < t < \tau, \\
        f_1(e^{-2\pi t} S^1), & t \geq \tau,
    \end{cases}
\end{equation}
where $f_1$, $f_A$, $f_2$ are the uniformizing maps of the complement of a pair of non-intersecting simple smooth loops $\gamma_1$, $\gamma_2$ like in Figure~\ref{fig:setup}.
This is the foliation by equipotential loops depicted in Figure~\ref{fig:foliation}.

\begin{figure}
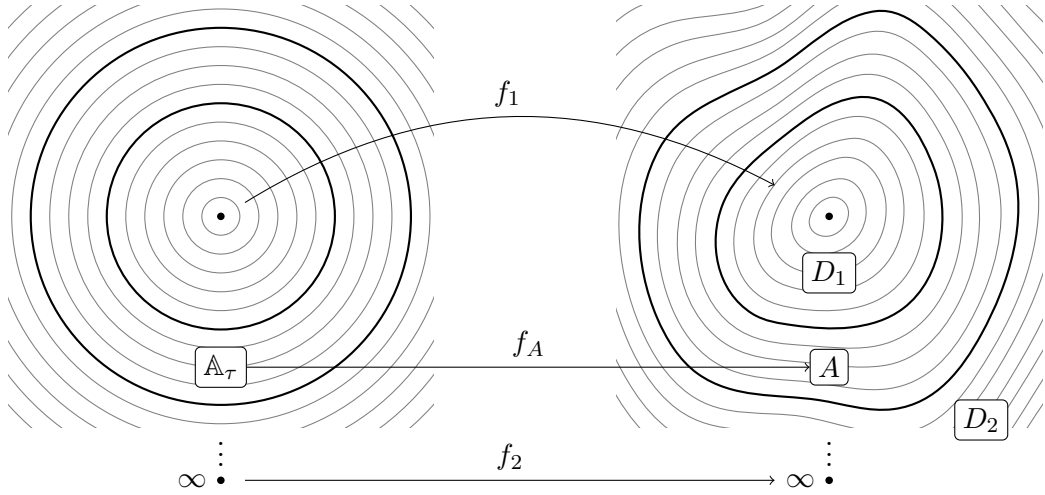

\centering
\includestandalone[]{figures/fig_foliation}
\caption{
    The pushforward of the foliation of $\C \setminus \{0\}$ by concentric circles by the conformal maps $f_1$, $f_A$, $f_2$ as in Figure~\ref{fig:setup} which uniformize the complements of the highlighted loops. 
}
\label{fig:foliation}
\end{figure}

By the energy duality theorem \cite[Theorem~1.2]{viklund_wang_lk_energy}, the Loewner--Kufarev energy of the specific foliation $(\foliation_t)_{t \in \R}$ may be defined as
\begin{equation}
    \label{eq:lkenergy_def}
    \lkenergy((\foliation_t)_{t \in \R}) = \frac{1}{16\pi}
    \iint_{\C} |\nabla \varphi(z)|^2 \dz, \qquad
    \varphi(z) = \begin{cases}
        \arg \frac{z \; f_1'(z)}{f_1(z)}, & z \in e^{-2\pi \tau} \disk, \\
        \arg \frac{z \; f_A'(z)}{f_A(z)}, & z \in \annulus_\tau, \\
        \arg \frac{z \; f_2'(z)}{f_2(z)}, & z \in \disk^*.
    \end{cases}
\end{equation}
This the Dirichlet energy of the winding function $\varphi$ defined above, where the argument is defined as the continuous branch going to $0$ as $z \to \infty$.
\begin{theorem}
    \label{thm:lkenergy}
    For two non-intersecting simple smooth loops $\gamma_1$, $\gamma_2$ in $\hat \C$ with the domains and conformal maps set up as in Figure~\ref{fig:setup}, the two-loop Loewner potential~\eqref{eq:lpot_def} equals
    \begin{equation}
            \lpot{\hat \C, 2}(\gamma_1, \gamma_2)
            =
            \lpot{\hat \C, 2}(e^{-2\pi\tau} S^1, S^1)
            + \frac{4}{3}\lkenergy((\foliation_t)_{t \in \R})
            - \frac{1}{6} \log \left|
                \frac{
                    f_2'(\infty)
                }{
                    f_1'(0)
                }
            \right|
            + \const
            .
    \end{equation}
    with the winding function $\varphi$ and Loewner--Kufarev energy $\lkenergy$ as in Equation~\eqref{eq:lkenergy_def}.
\end{theorem}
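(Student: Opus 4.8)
The plan is to convert the three pre-Schwarzian Dirichlet integrals produced by Theorem~\ref{thm:lpot_liouville} into the Dirichlet energy of the winding function $\varphi$, absorbing the discrepancy with the multiple Grunsky equality of Proposition~\ref{prop:multiple_grunsky}, in the spirit of \cite{viklund_wang_lk_energy}. The starting observation is that in each of the three regions the winding function is the imaginary part of a holomorphic function: writing $w_j = \log(z f_j'/f_j)$ for $j \in \{1, A, 2\}$, we have $\varphi = \Im w_j$ and hence $|\nabla \varphi|^2 = |w_j'|^2$ with
\[
    w_j' = \frac{f_j''}{f_j'} - \Big(\frac{f_j'}{f_j} - \frac1z\Big) = \preschwarzian{f_j} - \Big(\frac{f_j'}{f_j} - \frac1z\Big).
\]
(Here $\varphi$ is continuous across the interface circles, since all three pieces describe the winding of the same equipotential loops, so $\iint_\C |\nabla\varphi|^2 \dz$ is the sum of the three regional Dirichlet energies.) Expanding the square gives, in each region, $|w_j'|^2 = |\preschwarzian{f_j}|^2 - 2\Re\big(\preschwarzian{f_j}\,\overline{(f_j'/f_j - 1/z)}\big) + |f_j'/f_j - 1/z|^2$.

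Summing over the three regions, the $|\preschwarzian{f_j}|^2$ terms reproduce exactly the pre-Schwarzian integral $\mathcal{P}$ appearing in Theorem~\ref{thm:lpot_liouville}. The terms $|f_j'/f_j - 1/z|^2$ sum to $2\pi \log|f_2'(\infty)/f_1'(0)|$ by Proposition~\ref{prop:multiple_grunsky}: since $\gamma_1, \gamma_2$ are smooth, $\hat\C \setminus (D_1 \cup A \cup D_2) = \gamma_1 \cup \gamma_2$ has measure zero, so the equality case applies. Writing $L = \log|f_2'(\infty)/f_1'(0)|$, it therefore remains to evaluate the cross term $\mathcal{X} := \sum_j \Re \iint \preschwarzian{f_j}\,\overline{(f_j'/f_j - 1/z)} \,\dz$ and to show that $\mathcal{X} = 2\pi L + \const$, with the constant independent of the loops.

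For the cross term I would integrate by parts in the $\bar\partial$-form used for Proposition~\ref{prop:multiple_grunsky}: since $\preschwarzian{f_j}$ is holomorphic and $\partial_{\bar z}\,\overline{\log(f_j/z)} = \overline{f_j'/f_j - 1/z}$, each regional integral equals the contour integral $\frac{1}{2i}\oint_{\partial\Omega_j} \preschwarzian{f_j}\,\overline{\log(f_j/z)}\,\dd z$ over the circles $e^{-2\pi\tau}S^1$ and $S^1$. Using $\preschwarzian{f_j} = (f_j'/f_j - 1/z) + w_j'$ splits $\mathcal{X}$ into a copy of the Grunsky sum (again $2\pi L$) plus $\sum_j \Re \iint w_j'\,\overline{(f_j'/f_j - 1/z)}\,\dz$; the remaining piece is then evaluated from the Laurent expansions of $\log f_j$, $\log(f_j/z)$ and $\log f_j'$ on the boundary circles, exactly as in the proof of Proposition~\ref{prop:multiple_grunsky}. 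The contributions of the two interface circles coming from the adjacent regions combine, and by the same mechanisms as in the proof of Theorem~\ref{thm:lpot_liouville} (cancellation of the $f_A$-contour integrals and the mean-value property at $0$ and $\infty$) the total collapses to a loop-independent constant. This contour computation, which genuinely requires the interface cancellations and generalizes the single-annulus bookkeeping of Proposition~\ref{prop:multiple_grunsky}, is the main obstacle.

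Finally I would assemble the pieces. The above gives $16\pi\,\lkenergy((\foliation_t)_{t\in\R}) = \mathcal{P} - 2(2\pi L + \const) + 2\pi L = \mathcal{P} - 2\pi L - \const$, i.e.\ $\mathcal{P} = 16\pi\,\lkenergy((\foliation_t)_{t\in\R}) + 2\pi L + \const$. Substituting into Theorem~\ref{thm:lpot_liouville} and using $\tfrac{16\pi}{12\pi} = \tfrac43$ and $\tfrac{2\pi}{12\pi} - \tfrac13 = -\tfrac16$ yields
\[
    \lpot{\hat\C, 2}(\gamma_1, \gamma_2) = \lpot{\hat\C, 2}(e^{-2\pi\tau} S^1, S^1) + \frac43 \lkenergy((\foliation_t)_{t\in\R}) - \frac16 \log\Big|\frac{f_2'(\infty)}{f_1'(0)}\Big| + \const,
\]
which is the claimed identity.
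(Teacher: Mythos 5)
Your reduction is correct as far as it goes, and it isolates exactly the right quantity: granting Theorem~\ref{thm:lpot_liouville} and the equality case of Proposition~\ref{prop:multiple_grunsky} (which does apply, since the complement of $D_1\cup A\cup D_2$ is $\gamma_1\cup\gamma_2$), the theorem is equivalent to the cross-term identity
\[
\sum_{j\in\{1,A,2\}}\Re\iint\preschwarzian{f_j}\,\overline{\left(\frac{f_j'}{f_j}-\frac{1}{z}\right)}\,\dz
\;=\;
2\pi\log\left|\frac{f_2'(\infty)}{f_1'(0)}\right|+\const,
\]
and your final arithmetic ($\tfrac{16\pi}{12\pi}=\tfrac43$, $\tfrac{2\pi}{12\pi}-\tfrac13=-\tfrac16$) is right. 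The gap is that this identity carries the entire content of the theorem, and the argument you sketch for it would not go through. After Stokes, each region contributes $\frac{1}{2\ii}\oint\preschwarzian{f_j}\,\overline{\log(f_j/z)}\,\dd z$ over its boundary circles, and the two sides of each interface circle carry integrands built from \emph{different} maps ($f_1$ versus $f_A$ on $e^{-2\pi\tau}S^1$, $f_A$ versus $f_2$ on $S^1$), related only through the conformal welding of $\gamma_1$, $\gamma_2$. So there is no analogue of the cancellations you invoke: in Proposition~\ref{prop:multiple_grunsky} the integrand $\log\bar z/z$ is universal and lives on image contours that match up as $\varepsilon\to0$, and in Theorem~\ref{thm:lpot_liouville} the boundary terms that cancel are reparametrization-invariant curvature integrals over $\gamma_1$, $\gamma_2$; the cross term has neither structure. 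Indeed, the residual term $\sum_j\Re\iint w_j'\,\overline{(f_j'/f_j-1/z)}\,\dz$ (your notation), which you need to be a loop-independent constant, is not even constant region by region: for $f_2(z)=z+\beta/z$ with $\beta$ small, the $\disk^*$ integral equals $-4\pi\beta^2+O(\beta^3)$, so constancy of the sum forces an exact compensation by the $f_A$-term through the welding constraint --- precisely the kind of cross-region identity your outline never establishes.

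The paper proves the theorem by never computing the cross term. Since $\lpot{\hat \C, 2}$ is Möbius invariant, it averages Theorem~\ref{thm:lpot_liouville} applied to $(\gamma_1,\gamma_2)$ and to the inverted pair $(\inversion(\gamma_2),\inversion(\gamma_1))$, $\inversion(z)=1/z$, whose complementary domains are uniformized by $\inversion\circ f_j\circ\inversion\circ(e^{2\pi\tau}\blank)$. A pointwise computation gives
\[
\big|\preschwarzian{\inversion\circ f\circ\inversion\circ(e^{2\pi\tau}\blank)}(w)\big|^2\,\dw
=\left(-\big|\preschwarzian{f}\big|^2+2\left|\frac{f'}{f}-\frac1z\right|^2+2\,\bigg|\nabla\arg\frac{z\,f'(z)}{f(z)}\bigg|^2\right)\dz ,
\]
so in the average the $|\preschwarzian{f_j}|^2$ integrals cancel, leaving exactly the Grunsky sum (evaluated by Proposition~\ref{prop:multiple_grunsky}) and $16\pi\lkenergy((\foliation_t)_{t\in\R})$, while the $\log$-derivative terms combine to $-\frac16\log|f_2'(\infty)/f_1'(0)|$ via Equation~\eqref{eq:coordinate_transform_inversion}. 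In effect, the inversion converts the unwanted cross terms into the quantities you want, and this symmetrization (not a direct contour evaluation) is the missing idea; if you wish to keep your direct route, proving your cross-term identity amounts to reproducing this argument.
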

\begin{proof}
    For a generic holomorphic univalent function we have,
    \begin{equation}
        \bigg|\nabla \arg \frac{z \; f'(z)}{f(z)}\bigg|^2
        = \bigg|\preschwarzian{f} - \frac{f'}{f} + \frac{1}{z} \bigg|^2
        = \big|\preschwarzian{f}\big|^2
        + \bigg|\frac{f'}{f} - \frac{1}{z}\bigg|^2
        - 2 \Re \bigg(\preschwarzian{f} \overline{\bigg(\frac{f'}{f} - \frac{1}{z}\bigg)}\bigg)
        .
    \end{equation}
    To find similar terms from the pre-Schwarzian in Theorem~\ref{thm:lpot_liouville}, we study the inverted loops $\inversion(\gamma_1)$ and $\inversion(\gamma_2)$ where $\inversion(z) = \frac{1}{z}$. The domains complementary to the inverted pair are uniformized by the functions
    \begin{align}
        \inversion \circ f_2 \circ \inversion \circ (e^{2\pi\tau} \blank) &: e^{2 \pi \tau} \disk \to \inversion(D_2), \\
        \inversion \circ f_A \circ \inversion \circ (e^{2\pi\tau} \blank) &: \annulus_\tau \to \inversion(A), \\
        \inversion \circ f_1 \circ \inversion \circ (e^{2\pi\tau} \blank) &: \disk \to \inversion(D_1).
    \end{align}
    Generically, the pre-Schwarzian of such a map is
    \begin{equation}
    \begin{gathered}
        \big|\preschwarzian{\inversion \circ f \circ \inversion \circ (e^{2\pi\tau} \blank)}(z)\big|^2
        =
        \frac{e^{-4 \pi \tau}}{|z|^4}
        \bigg|
            \frac{2 f'\big(\frac{1}{e^{2 \pi \tau}z}\big)}{f\big(\frac{1}{e^{2 \pi \tau}z}\big)}
            - \frac{f''\big(\frac{1}{e^{2 \pi \tau}z}\big)}{f'\big(\frac{1}{e^{2 \pi \tau}z}\big)}
            - 2 e^{2 \pi \tau} z
        \bigg|^2
        = \\
        \frac{e^{-4 \pi \tau}}{|z|^4} \Bigg(
        \big|\preschwarzian{f}({\textstyle \frac{1}{e^{2 \pi \tau}z}})\big|^2
        + 4 \bigg|
            \frac{f'(\frac{1}{e^{2 \pi \tau}z})}{f(\frac{1}{e^{2 \pi \tau}z})}
            - e^{2 \pi \tau}z
        \bigg|^2
        - 4 \Re \bigg(
            \preschwarzian{f}({\textstyle \frac{1}{e^{2 \pi \tau}z}})
            \overline{\bigg(
                \frac{f'(\frac{1}{e^{2 \pi \tau}z})}{f(\frac{1}{e^{2 \pi \tau}z})}
                - e^{2 \pi \tau}z
            \bigg)}
        \bigg)
        \Bigg)
        .
    \end{gathered}
    \end{equation}
    Therefore, we have the following formula under the coordinate transformation $w = \frac{1}{e^{2 \pi \tau}z}$,
    \begin{equation}
        \big|\preschwarzian{\inversion \circ f \circ \inversion \circ (e^{2 \pi \tau} \blank)}(w)\big|^2 \dw
        =
        \bigg(
        - |\preschwarzian{f}(z)|^2
        + 2 \; \bigg|\frac{f'}{f} - \frac{1}{z}\bigg|^2
        + 
        2 \; \bigg|\nabla \arg \frac{z \; f'(z)}{f(z)}\bigg|^2
        \bigg)
        \dz
        .
    \end{equation}
    A direct computation shows that
    \begin{equation}
    \label{eq:coordinate_transform_inversion}
        \log \bigg|
            \frac{
                (\inversion \circ f_2 \circ \inversion \circ (e^{2 \pi \tau} \blank))'(0)
            }{
                (\inversion \circ f_1 \circ \inversion \circ (e^{2 \pi \tau} \blank))'(\infty)
            }
        \bigg|
        =
        - \log \bigg|
            \frac{f_2'(\infty)}{f_1'(0)}
        \bigg|
        .
    \end{equation}
    Note how the pre-Schwarzians in Equation~\eqref{eq:coordinate_transform_inversion} cancel with those in Theorem~\ref{thm:lpot_liouville} when applied to the non-inverted loops.
    The other terms to the left of Equation~\eqref{eq:coordinate_transform_inversion} may be rewritten by application of the Grunsky inequality, Proposition~\ref{prop:multiple_grunsky}, which is an equality here, and the definition~\eqref{eq:lkenergy_def} of Loewner--Kufarev energy.
    By combining these findings, we obtain
    \begin{equation}
    \begin{aligned}
        &\lpot{\hat \C, 2}(\gamma_1, \gamma_2)
        =
        \frac{1}{2} \bigg(
            \lpot{\hat \C, 2}(\gamma_1, \gamma_2)
            +
            \lpot{\hat \C, 2}(\inversion(\gamma_2), \inversion(\gamma_2))
        \bigg)
        \\ = \; &
        \lpot{\hat \C, 2}(e^{-2\pi\tau} S^1, S^1)
        + \frac{4}{3}\lkenergy((\foliation_t)_{t \in \R})
        - \frac{1}{6}
        \log \bigg|
            \frac{f_2'(\infty)}{f_1'(0)}
        \bigg|
        + \const.
    \end{aligned}
    \end{equation}
\end{proof}

\section{Variation of the two-loop Loewner potential}
\label{section:variations_sle}

\subsection{Variations preserving the modulus}

In this section, we employ the following variational formula of the one-loop Loewner potential \cite{takhtajan_teo, wang_equivalent, sung_wang} to find variations of the two-loop Loewner potential preserving the modulus of the annulus between the loops:
\begin{equation}
    \label{eq:one_loop_variation}
    \eval{\pdv{\varepsilon}}_{\varepsilon = 0}
    \lpot{\hat \C}(\omega^{\varepsilon \nu}(\gamma))
    = - \frac{1}{3\pi} \left(
        \iint_{D_1} \nu \, \schwarzian{f_1^{-1}} \, \dz
        + \iint_{D_2} \nu \, \schwarzian{f_2^{-1}} \, \dz
    \right)
    .
\end{equation}
Here, $D_1$ and $D_2$ are the bounded and unbounded connected components of $\hat \C \setminus \gamma$ and $f_1 : \disk \to D_1$, $f_2 : \disk^* \to D_2$ are Riemann mappings. The direction of the variation is characterized by an infinitesimal Beltrami differential $\nu$ on $\hat \C$ with compact support away from the curve $\gamma$. Also, $\omega^{\varepsilon \nu} : \hat \C \to \hat \C$ is a quasiconformal solution of the Beltrami equation associated to $\varepsilon \nu$ for small $\varepsilon \geq 0$.

To make use of Equation~\eqref{eq:one_loop_variation}, we need a cascade relation for the two-loop Loewner potential --- analogous to Equation~\eqref{eq:cascade} for the $\SLE_\kappa$ two-loop measure.
The following is a direct consequence of Proposition~\ref{prop:lpot_detz}:
\begin{equation}
\label{eq:cascade_lpot}
\begin{aligned}
    \lpot{\hat \C, 2}(\gamma_1, \gamma_2)
    &=
    \lpot{\hat \C}(\gamma_1)
    + \log \frac{
    \detz{\restrict{g}{A \cup D_2}}
    }{
    \detz{\restrict{g}{A}}
    \detz{\restrict{g}{D_2}}
    }
    + \const
    \\ &=
    \lpot{\hat \C}(\gamma_2)
    + \log \frac{
    \detz{\restrict{g}{D_1 \cup A}}
    }{
    \detz{\restrict{g}{D_1}}
    \detz{\restrict{g}{A}}
    }
    + \const
    ,
\end{aligned}
\end{equation}
where the geometric setup described in Figure~\ref{fig:setup} is used.
\begin{proposition}
    \label{prop:two_loop_variation}
    Let $\nu$ be an infinitesimal Beltrami differential with compact support in the interior of $D_1 \cup D_2$ and let $f_1 : e^{-2 \pi \tau} \disk \to D_1$, $f_2 : \disk^* \to D_2$ be any Riemann mappings.
    Then,
    \begin{equation}
    \label{eq:two_loop_variation}
    \eval{\pdv{\varepsilon}}_{\varepsilon = 0}
        \lpot{\hat \C, 2}(\omega^{\varepsilon \nu}(\gamma_1), \omega^{\varepsilon \nu}(\gamma_2))
        = - \frac{1}{3\pi} \left(
            \iint_{D_1} \nu \, \schwarzian{f_1^{-1}} \, \dz
            + \iint_{D_2} \nu \, \schwarzian{f_2^{-1}} \, \dz
        \right).
    \end{equation}
    The variation is independent of the choice of Riemann mappings $f_1$, $f_2$.
\end{proposition}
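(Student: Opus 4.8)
The plan is to reduce the two-loop variation to two applications of the one-loop formula~\eqref{eq:one_loop_variation}, exploiting the two cascade relations~\eqref{eq:cascade_lpot}. Since $\nu$ is compactly supported in the interior of $D_1 \cup D_2$, I would split $\nu = \nu_1 + \nu_2$ with $\nu_1$ supported in the interior of $D_1$ and $\nu_2$ in the interior of $D_2$. Because the first variation $\eval{\pdv{\varepsilon}}_{\varepsilon=0} F(\omega^{\varepsilon\nu})$ depends linearly on the Beltrami coefficient (the vector field generating the flow $\varepsilon \mapsto \omega^{\varepsilon\nu}$ is linear in $\nu$), it suffices to treat $\nu_1$ and $\nu_2$ separately and add. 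The crucial bookkeeping point is that each piece must be paired with the cascade relation whose accompanying determinant ratio is left inert by the corresponding deformation.

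For $\nu_1$ I would use the first form of \eqref{eq:cascade_lpot}, writing $\lpot{\hat \C, 2} = \lpot{\hat \C}(\gamma_1) + T + \const$, where $T = \log\big(\detz{\restrict{g}{A \cup D_2}} / (\detz{\restrict{g}{A}}\,\detz{\restrict{g}{D_2}})\big)$ is supported on the exterior domain $\Omega := A \cup \gamma_2 \cup D_2 = \hat \C \setminus \overline{D_1}$. The complement of the single loop $\gamma_1$ in $\hat \C$ has the two components $D_1$ and $\Omega$, so applying the one-loop formula~\eqref{eq:one_loop_variation} to $\gamma_1$ produces Schwarzian integrals over $D_1$ and over $\Omega$; the latter vanishes since $\nu_1$ is supported in $D_1$, which is disjoint from $\Omega$. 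Moreover, any Riemann map $\disk \to D_1$ differs from $f_1 : e^{-2\pi\tau}\disk \to D_1$ by a Möbius transformation, which leaves the Schwarzian of the inverse unchanged, so the surviving term is exactly $-\tfrac{1}{3\pi}\iint_{D_1} \nu_1\, \schwarzian{f_1^{-1}}\, \dz$.

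The crux is to show that $T$ does not vary under $\omega^{\varepsilon\nu_1}$. First, $T$ is independent of the metric within the conformal class: this is immediate from Proposition~\ref{prop:lpot_detz} (both $\lpot{\hat \C, 2}$ and $\lpot{\hat \C}(\gamma_1)$ are metric-free), and the underlying mechanism is the Polyakov--Alvarez formula~\eqref{eq:polyakov_alvarez}, whose bulk anomaly over $\Omega$ matches that over $A \cup D_2$, whose boundary anomalies along the interior curve $\gamma_2$ cancel (opposite normals, continuous conformal factor), and whose anomalies along $\gamma_1 = \partial\Omega$ cancel between the numerator and the $A$-factor. Given this, since $\nu_1$ vanishes off the interior of $D_1$, the map $\omega^{\varepsilon\nu_1}$ is conformal on a neighborhood of $\overline{\Omega}$; by diffeomorphism invariance of the zeta-determinant the configuration $\big(\omega^{\varepsilon\nu_1}(\gamma_2);\,\omega^{\varepsilon\nu_1}(\Omega)\big)$ with metric $g$ has the same value of $T$ as $(\gamma_2;\Omega)$ with the pulled-back metric $(\omega^{\varepsilon\nu_1})^* g = e^{2\rho_\varepsilon} g$, which by metric-independence is the value at $\varepsilon = 0$. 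Hence $T$ is constant in $\varepsilon$ and its variation vanishes identically. I expect this inertness of the determinant ratio under the modulus-preserving conformal deformation to be the main obstacle, as it is precisely where the cancellation of anomaly terms and the conformality off $\operatorname{supp}\nu_1$ must be combined.

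Treating $\nu_2$ symmetrically with the second form of \eqref{eq:cascade_lpot}, whose determinant ratio is supported on the interior domain $D_1 \cup \gamma_1 \cup A = \hat \C \setminus \overline{D_2}$ on which $\omega^{\varepsilon\nu_2}$ is conformal, yields $-\tfrac{1}{3\pi}\iint_{D_2} \nu_2\, \schwarzian{f_2^{-1}}\, \dz$ by the same argument. Adding the two contributions and using that $\nu_1 = \nu$ on $D_1$ and $\nu_2 = \nu$ on $D_2$ gives the claimed formula~\eqref{eq:two_loop_variation}. Independence of the choice of Riemann maps $f_1, f_2$ then follows from the Möbius-invariance of $\schwarzian{f_j^{-1}}$ used above, or directly from the corresponding independence already present in the one-loop formula.
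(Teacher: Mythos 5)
Your proposal is correct and follows essentially the same route as the paper's proof: decompose $\nu = \nu_1 + \nu_2$ by linearity, pair each piece with the matching cascade relation from \eqref{eq:cascade_lpot}, observe that the determinant ratio is inert because $\omega^{\varepsilon\nu_j}$ is conformal off the support of $\nu_j$ and the ratio is metric-independent (the paper states this via pulling back the metric and invoking metric independence, exactly your argument), and then apply the one-loop formula \eqref{eq:one_loop_variation} with the exterior Schwarzian term killed by the support condition. Your additional remarks — the Polyakov--Alvarez cancellation mechanism behind the metric independence and the Möbius-invariance reconciling $f_1 : e^{-2\pi\tau}\disk \to D_1$ with the unit-disk Riemann map in the one-loop formula — are correct elaborations of points the paper leaves implicit.
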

\begin{proof}
    We use linearity of the variation by decomposing the infinitesimal Beltrami differential as $\nu = \nu_1 + \nu_2$, where $\nu_j$ has support in $D_j$, $j = 1, 2$.
    
    By the cascade relation \eqref{eq:cascade_lpot},
    \begin{equation}
        \label{eq:cascade_lpot_nu}
        \lpot{\hat \C}(\omega^{\varepsilon \nu_1}(\gamma_1), \omega^{\varepsilon \nu_1}(\gamma_2))
        =
        \lpot{\hat \C}(\omega^{\varepsilon \nu_1}(\gamma_1))
        + \log \frac{
        \detz{\restrict{g}{\omega^{\varepsilon \nu_1}(A \cup D_2)}}
        }{
        \detz{\restrict{g}{\omega^{\varepsilon \nu_1}(A)}}
        \detz{\restrict{g}{\omega^{\varepsilon \nu_1}(D_2)}}
        }
        + \const.
    \end{equation}
    Note that $\omega^{\varepsilon \mu_1}$ is holomorphic on $A \cup D_2$ and the ratio of determinants in \eqref{eq:cascade_lpot_nu} is independent of $g$. Thus, this term is independent of $\varepsilon$:
    \begin{equation}
        \frac{
        \detz{\restrict{g}{\omega^{\varepsilon \nu_1}(A \cup D_2)}}
        }{
        \detz{\restrict{g}{\omega^{\varepsilon \nu_1}(A)}}
        \detz{\restrict{g}{\omega^{\varepsilon \nu_1}(D_2)}}
        }
        =
        \frac{
        \detz{\restrict{\omega^{(\varepsilon \nu_1})^* g}{A \cup D_2}}
        }{
        \detz{\restrict{\omega^{(\varepsilon \nu_1})^* g}{A}}
        \detz{\restrict{\omega^{(\varepsilon \nu_1})^* g}{D_2}}
        }
        =
        \frac{
        \detz{\restrict{g}{A \cup D_2}}
        }{
        \detz{\restrict{g}{A}}
        \detz{\restrict{g}{D_2}}
        }
        .
    \end{equation}
    From the variation \eqref{eq:one_loop_variation} of the first term we find
    \begin{equation}
        \eval{\pdv{\varepsilon}}_{\varepsilon = 0}
        \lpot{\hat \C}(\omega^{\varepsilon \nu_1}(\gamma_1), \omega^{\varepsilon \nu_1}(\gamma_2))
        = - \frac{1}{3\pi}
            \iint_{D_1} \nu_1 \, \schwarzian{f_1^{-1}} \, \dz
        .
    \end{equation}
    The result \eqref{eq:two_loop_variation} is obtained by combining the above with the analogous formula for $\nu_2$.
\end{proof}

\subsection{Variation of the modulus}
\label{section:lpot_variation_modulus}

The variational formula of Proposition~\ref{prop:two_loop_variation} implies that at any critical point of $\lpot{\smash{\hat \C, 2}}(\gamma_1, \gamma_2)$ it must hold that $\gamma_1$ and $\gamma_2$ are circles in $\hat \C$ (such that $f_1$ and $f_2$ are Möbius transformations).
By conformal invariance, such a configuration is determined by the modulus $\tau$ of the annulus $A$ between the circles $\gamma_1$ and $\gamma_2$. However, the variations in Proposition~\ref{prop:two_loop_variation} preserve this modulus.
By applying explicit formulas for the zeta-regularized determinant of the Laplacian in the flat metric on a disk or on an annulus with circular boundary, we obtain the following formula.
\begin{proposition}
    \label{prop:lpot_circles}
    Let $\gamma_1$ and $\gamma_2$ be disjoint circles in $\hat \C$ and denote by $\tau > 0$ the modulus of the annulus enclosed by the circles. Then,
    \begin{equation}
        \lpot{\hat \C, 2}(\gamma_1, \gamma_2) =
        - \log \tau
        - 2 \log \phi(e^{-4 \pi \tau})
        + \const,
    \end{equation}
    where the constant is independent of $\tau$ and $\phi(x) = \prod_{k = 1}^\infty (1 - x^k)$ is the Euler function~\eqref{eq:euler}.
\end{proposition}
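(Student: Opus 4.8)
The plan is to evaluate explicitly the four zeta-determinants appearing in Proposition~\ref{prop:lpot_detz} for a standard configuration, tracking only the dependence on the modulus $\tau$ and absorbing everything else into the additive constant. First I would use conformal invariance to place the two circles concentrically, so that, matching the conventions of Figure~\ref{fig:setup}, $D_1 = e^{-2\pi\tau}\disk$, $A = \annulus_\tau$, and $D_2 = \disk^*$. As in the proof of Theorem~\ref{thm:lpot_liouville}, I would fix a smooth metric $g$ on $\hat\C$ equal to the flat metric $\flatmetric$ on a neighborhood of $\overline{D_1\cup A}$ and capped off by a round metric near $\infty$. Since both $(\hat\C, g)$ and $(D_2, \restrict{g}{D_2})$ are then independent of $\tau$, Proposition~\ref{prop:lpot_detz} reduces the claim to $\lpot{\hat\C, 2}(\gamma_1,\gamma_2) = - \log\detz{\restrict{g}{D_1}} - \log\detz{\restrict{g}{A}} + \const$, and it remains to isolate the $\tau$-dependence of the two flat determinants.

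For the disk, the flat Dirichlet Laplacian on the round disk $D_1$ of radius $e^{-2\pi\tau}$ has eigenvalues $e^{4\pi\tau}$ times those on the unit disk, so by the scaling law $\log\detz{\lambda\Delta} = \log\detz{\Delta} + (\log\lambda)\,\zeta_\Delta(0)$ together with the flat-disk heat coefficient $\zeta_\Delta(0) = 1/6$, I obtain $\log\detz{\restrict{g}{D_1}} = \tfrac{2\pi\tau}{3} + \const$. For the annulus I would map $\annulus_\tau$ to the flat cylinder $(0, 2\pi\tau)\times S^1$ via $z\mapsto\log z$, under which the flat metric becomes $e^{2\sigma}$ times the cylinder metric with $\sigma = \log|z|$. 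Because the cylinder is flat and its two boundary circles are geodesics, in the Polyakov--Alvarez formula~\eqref{eq:polyakov_alvarez} the curvature and boundary-curvature terms drop out and the normal-derivative contributions from the two ends cancel, leaving only the Dirichlet-energy term, which evaluates to $-\tfrac{\pi\tau}{3}$. On the cylinder the Laplacian diagonalizes over the angular modes $n\in\Z$ into one-dimensional Dirichlet operators $-\partial_s^2 + n^2$ of interval length $2\pi\tau$, whose zeta-determinants are $4\pi\tau$ for $n=0$ and $\tfrac{2\sinh(2\pi\tau|n|)}{|n|}$ otherwise. Zeta-regularizing the product over $n$, using $\zeta(-1) = -1/12$ for the linear term and $\sum_{n\geq1}\log n = -\zeta'(0)$ for the constant, yields the cylinder value $\log\tau + 2\log\phi(e^{-4\pi\tau}) - \tfrac{\pi\tau}{3} + \const$. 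Adding the anomaly then gives $\log\detz{\restrict{g}{A}} = \log\tau + 2\log\phi(e^{-4\pi\tau}) - \tfrac{2\pi\tau}{3} + \const$, which reproduces Weisberger's explicit flat-annulus formula \cite{weisberger}.

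Substituting the two determinants, the linear terms $\pm\tfrac{2\pi\tau}{3}$ cancel and I am left with $\lpot{\hat\C, 2}(\gamma_1,\gamma_2) = -\log\tau - 2\log\phi(e^{-4\pi\tau}) + \const$, as claimed. The main obstacle is the bookkeeping in the annulus computation: verifying that all curvature and boundary terms in the conformal-anomaly formula genuinely vanish on the cylinder, and regularizing the divergent product over angular modes so that the spurious linear-in-$\tau$ and constant contributions are correctly separated. The decisive consistency check is exactly the cancellation of the $\tfrac{2\pi\tau}{3}$ terms arising separately from the disk rescaling and from the annulus, since without it no expression in $\tau$ alone could emerge. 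Alternatively, one may bypass the cylinder derivation entirely by inserting Weisberger's closed-form expressions for the flat disk and flat annulus determinants directly.
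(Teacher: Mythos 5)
Your proof is correct and follows essentially the same route as the paper: reduce to concentric circles by conformal invariance, fix a metric that is flat near $\overline{D_1\cup A}$ so that the determinants over $\hat\C$ and $D_2$ become $\tau$-independent constants, apply Proposition~\ref{prop:lpot_detz}, and conclude from the explicit flat disk and annulus determinants, whose linear-in-$\tau$ parts cancel. The only difference is that the paper directly substitutes Weisberger's closed-form expressions~\eqref{eq:det_disk} and~\eqref{eq:det_annulus} rather than re-deriving their $\tau$-dependence via the eigenvalue scaling law and the cylinder mode decomposition, so the shortcut you mention in your final sentence is exactly the paper's proof.
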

\begin{proof}
    By conformal invariance, without loss of generality, assume that $\gamma_1 = e^{-2\pi \tau} \, S^1$ and $\gamma_2 = S^1$.
    The annulus between the curves becomes
    \begin{equation}
        \annulus_\tau = \setsuchthatinline{z \in \C}{e^{-2\pi \tau} \leq |z| \leq 1}.
    \end{equation}
    Since the expression for $\lpot{\smash{\hat \C, 2}}$ in Proposition~\ref{prop:lpot_detz} is independent of the metric $g$ on $\hat \C$, we fix a metric $g$ which is the flat metric $\flatmetric$ on $\disk$ and extends smoothly to $\hat \C$ (this choice is independent of $\tau$).
    Using the expressions~\eqref{eq:det_disk} and~\eqref{eq:det_annulus}, we find that up to constants not depending on $\tau$,
    \begin{align}
        \lpot{\hat \C, 2}(e^{- 2\pi \tau}\, S^1, S^1)
        &=
        \log \frac{
            \detz{\restrict{g}{\hat \C}}
        }{
            \detz{\restrict{\flatmetric}{e^{-2\pi \tau} \disk}}
            \detz{\restrict{\flatmetric}{
                \annulus_\tau
            }}
            \detz{\restrict{g}{\disk^*}}
        }
        \nonumber
        \\ &=
        - \log
        \detz{\restrict{\flatmetric}{e^{-2\pi \tau} \disk}}
        - \log \detz{\restrict{\flatmetric}{
            \annulus_\tau
        }}
        + \const
        \nonumber
        \\ &=
        - \frac{2 \pi}{3} \tau
        + \frac{2 \pi}{3} \tau
        - \log(2 \pi \tau)
        - 2 \log \phi(
            e^{-4 \pi \tau}
        )
        + \const
        \nonumber
        \\ &=
        - \log(\tau)
        - 2 \log \phi(
            e^{-4 \pi \tau}
        )
        + \const
        \nonumber
        \\ &=
        - \log \tau
        - 2 \sum_{k = 1}^\infty \log(1 - e^{-4 \pi k \tau})
        + \const
        \label{eq:lpot_decreasing}
        .
    \end{align}
\end{proof}

Observe from Equation~\eqref{eq:lpot_decreasing}, that $\lpot{\smash{\hat \C, 2}}(e^{-2\pi \tau} S^1, S^1)$ is a decreasing function of $\tau$. The $\tau \to 0$ limit, in which the curves merge, is $\infty$ and the $\tau \to \infty$ limit, pushing the curves apart, is $-\infty$. Thus, the infimum of the two-loop Loewner potential is
\begin{equation}
    \inf_{(\gamma_1, \gamma_2)} \lpot{\hat \C, 2}(\gamma_1, \gamma_2) = - \infty
\end{equation}
and it is not attained. Therefore, it is not possible to define a two-loop Loewner energy by subtracting the minimum from the Loewner potential in the same way as for the one-loop Loewner potential and energy (see Equation~\eqref{eq:lenergy_one}).

\section{CFT partition functions and the two-loop Loewner potential}
\label{section:cft_partition_functions}

In this section, we discuss two results about the generalized notion of two-loop Loewner potential $\smash{\lpotx{\hat \C, 2}{Z}(\gamma_1, \gamma_2)}$ introduced in equation~\eqref{eq:def_lpot_z_two}.
First, we show that the definition in terms of zeta-regularized determinants of Laplacians in Equation~\eqref{eq:lpot_two_detz} from the generalization may be recovered from the generalization.
Since the definition of $\smash{\lpotx{\hat \C, 2}{Z}(\gamma_1, \gamma_2)}$ takes ideas from CFT, we conceptually find the partition functions associated to the usual probabilistic definition of one-loop SLE.
Secondly, we show results analogous to those in Section~\ref{section:variations_sle} for the probabilistic one-loop Loewner potential.
This results in the condition for the existence of minimizers of two-loop Loewner potential mentioned in Equation~\eqref{eq:criterion}, see Proposition~\ref{prop:lpot_z_circles}.
This section draws on the derivation of Equation~\eqref{eq:def_lpot_z_two} via the real determinant line bundle in Appendix~\ref{appendix:detrc}.

\subsection{The partition functions of loop SLE}
\label{section:trivialization_sle}

Malliavin--Kontsevich--Suhov (MKS) loop measures, following the original description by Kontsevich and Suhov \cite{kontsevich_suhov}, take values in a real determinant line bundle over the space of loops.
This means that an actual loop measure $\sleloopx{\hat \C}{Z}$ (taking values in $\Rp$) is obtained by picking a trivialization $Z$ of the line bundle.
In fact, as explained in \cite[Section~3.2]{maibach_peltola} and in Appendix~\ref{appendix:detrc}, such a trivialization $Z(\gamma, \hat \C)$ at a loop $\gamma \subset \hat \C$ is identified in a canonical way simply with a positive real number,
\begin{equation}
    \label{eq:triv_number}
    Z(\gamma, \hat \C) \mapsto e^{\frac{\charge}{2}\lpotx{\hat \C}{Z}(\gamma)} \in \Rp
    .
\end{equation}
In this section, we explain why the function $\lpotx{\hat \C}{Z}(\gamma)$ of the loop $\gamma$ is defined as the Loewner potential with respect to~$Z$ in Definition~\ref{def:loewner_potential_Z} in Appendix~\ref{appendix:detrc}.

Still following \cite{kontsevich_suhov}, in the comparison of trivializations $Z_1$ and $Z_2$, the measures $\sleloopx{\hat \C}{Z_1}$ and $\sleloopx{\hat \C}{Z_1}$ are absolutely continuous with Radon-Nikodym derivative being the ratio of the numbers \eqref{eq:triv_number},
\begin{equation}
    \frac{
        \dd \sleloopx{\hat \C}{Z_2}
    }{
        \dd \sleloopx{\hat \C}{Z_1}
    }(\gamma)
    = e^{\frac{\charge}{2} \big(
        \lpotx{\hat \C}{Z_2}(\gamma)
        -
        \lpotx{\hat \C}{Z_1}(\gamma)
    \big)}.
\end{equation}

Since the real determinant line bundle is a trivial line bundle, the choice of trivialization is usually omitted when discussing $\SLE_\kappa$ within the context of probability theory.
In this work, however, we are interested in the properties of two-loop Loewner potential $\lpotx{\hat \C, 2}{Z}$ considering various trivializations.
Specifically, we discuss $\SLE_\kappa$ within the context of CFT, where the trivialization of the real determinant line bundle is given by the partition functions of the CFT --- as suggested by the notation $Z$.

Details on the relation between the real determinant line bundle over surfaces and Loewner potentials are explained in Appendix~\ref{appendix:detrc}.
We just mention here that the restriction covariance property \eqref{eq:restriction_covariance} generalizes to the loop measure relative to a trivialization $Z$.
Namely, for $D \subset \hat \C$ we have
\begin{equation}
    \label{eq:restriction_covariance_z}
    \frac{
        \dd \sleloopx{D}{Z}
    }{
        \dd \sleloopx{\hat \C}{Z}
    }
    (\gamma)
    =
    \boldone_{\{\gamma \subset D\}} \;
    e^{\frac{\charge}{2}\big(
        \lpotx{D}{Z}(\gamma)
        -
        \lpotx{\hat \C}{Z}(\gamma)
    \big)}.
\end{equation}

We proceed to identify the trivialization of the real determinant line bundle associated to the usual $\SLE_\kappa$ loop measure $\sleloop{\hat \C}$.
Comparing the restriction covariance formulas \eqref{eq:restriction_covariance} and \eqref{eq:restriction_covariance_z}, we find that for $\sleloopx{\hat \C}{Z} = \sleloop{\hat \C}$ and $\sleloopx{D}{Z} = \sleloop{D}$ given a compact, simply connected subset $D \subset \C$ that
\begin{equation}
    \lpotx{D}{Z}(\gamma)
    -
    \lpotx{\hat \C}{Z}(\gamma)
    =
    \blmpairr(\gamma, \hat \C \setminus D)
    =
    \blmpairr(\gamma, \partial D)
    .
\end{equation}
By Equation~\eqref{eq:blm_detz} with $\gamma_1 = \gamma$ and $\gamma_2 = \partial D$, this holds for 
\begin{equation}
\begin{aligned}
    \lpotx{\hat \C}{Z}(\gamma)
    &=
    \log \frac{
        \detz{\restrict{g}{\hat \C}}
    }{
        \detz{\restrict{g}{D_1}}
        \detz{\restrict{g}{A \cup D_2}}
    }
    &&=
    \lpot{\hat \C}(\gamma),
    \\
    \lpotx{D}{Z}(\gamma)
    &=
    \log \frac{
        \detz{\restrict{g}{D}}
    }{
        \detz{\restrict{g}{D_1}}
        \detz{\restrict{g}{A}}
    }
    &&=
    \lpot{D}(\gamma)
\end{aligned}
\end{equation}
By \eqref{eq:def_lpot_z_one}, we conclude that the trivialization commonly used for SLE is the one of Example~\ref{example:trivializations}, Item~\ref{item:zeta_trivialization},
\begin{equation}
    \label{eq:triv_detz}
    Z(D) = \left(\frac{
        \detz{\restrict{g}{D}}
    }{
        \boundaryterm{g}(D)
    }\right)^{-\charge/2} [g] \in \Detrc(D).
\end{equation}
where $D \subseteq \hat \C$ is any subdomain.

\subsection{Variation of CFT Loewner potentials}
\label{section:variations_cft}

As explained in Appendix~\ref{appendix:detrc}, the real determinant line bundle over surfaces is defined over the corresponding moduli spaces of Riemann surfaces.
Since the moduli space containing $\hat \C$ or a simply connected subset $D \subset \hat \C$ only contains a single equivalence class, any two trivializations of $\Detrc$ over these moduli spaces are related by a multiplicative constant.
In particular, any trivialization $Z(D) = Z_g(D) [g] \in \Detrc(D)$ over any closed simply connected subset $D \subseteq \hat \C$ is related to the trivialization \eqref{eq:triv_detz} by a constant independent of~$D$,
\begin{equation}
    \label{eq:z_simply}
    \log Z_g(D) 
    =
    - \frac{\charge}{2} \log \frac{
        \detz{\restrict{g}{D}}
    }{
        \boundaryterm{g}(D)
    }
    + \const.
\end{equation}

Since the Loewner potential of a single loop $\gamma \subset \hat \C$ only involves $\hat \C$ and the simply connected domains $D_1$ and $D_2$, we find
\begin{equation}
    \label{eq:lpot_z_one}
    \lpotx{\hat \C}{Z}(\gamma) = \lpot{\hat \C}(\gamma) + \const,
\end{equation}
for a constant independent of $\gamma$.
Therefore, the one-loop Loewner energy may be expressed using any trivialization $Z$ without depending on the choice:
\begin{equation}
    \lenergy{\hat \C}(\gamma) = 12 \Big(\lpotx{\hat \C}{Z}(\gamma) - \inf_{\eta} \lpotx{\hat \C}{Z}(\gamma)\Big).
\end{equation}
This explains why in the study of MKS loop measures for a single loop in $\hat \C$, the possibility of having different trivializations for the real determinant line bundle does not play a role.
In particular, the same variational formula~\eqref{eq:one_loop_variation} holds for any $\lpotx{\hat \C}{Z}$.

For a two-loop configuration $(\gamma_1, \gamma_2)$ on $\hat \C$, the relation~\eqref{eq:lpot_z_one} is more complex.
The reason is that the two loops divide $\hat \C$ into two simply connected subsets $D_1$ and $D_2$ as well as a doubly-connected annulus $A$.
For the annulus, the analogue of Equation~\eqref{eq:z_simply} is
\begin{equation}
    \label{eq:z_annulus}
    \log Z_g(A) 
    =
    - \frac{\charge}{2}
    \log \frac{
        \detz{\restrict{g}{A}}
    }{
        \boundaryterm{g}(A)
    }
    + f_Z(\tau)
    ,
\end{equation}
for some function $f_Z(\tau) \in \R$
of the modulus $\tau$ of the annulus $A$. The function $f_Z(\tau)$ may be computed in the flat metric $\flatmetric$ on the standard annulus $\setsuchthat{z \in \C}{e^{-2\pi \tau} \leq |z| \leq 1}$,
\begin{equation}
    f_Z(\tau) = \log Z_{\flatmetric}(
        \annulus_\tau
        )
    + \frac{\charge}{2} \log \frac{
        \detz{\restrict{\flatmetric}{
            \annulus_\tau
        }}
    }{
        \boundaryterm{g}(
            \annulus_\tau
        )
    }.
\end{equation}
Comparing the two-loop Loewner potentials of the trivialization~\eqref{eq:triv_detz} and any other trivialization $\globaltriv$, we find
\begin{equation}
    \label{eq:lpot_z_two}
    \lpotx{\hat \C, 2}{\globaltriv}(\gamma_1, \gamma_2) = \lpot{\hat \C, 2}(\gamma_1, \gamma_2) + f_\globaltriv(\tau) + \const
    .
\end{equation}
The appearance of a function of the modulus, means that the existence of the infimum of $\lpotx{\hat \C, 2}{\globaltriv}(\gamma_1, \gamma_2)$ and the geometry of the minimizers depend on the choice of trivialization $\globaltriv$.

Recall that the variations in Theorem~\ref{prop:two_loop_variation} preserve the modulus $\tau$. The following result is a direct consequence of Equation~\eqref{eq:lpot_z_two}.
\begin{corollary}
    \label{cor:two_loop_variation_cft}
    Fix any trivialization $Z$ of the real determinant line bundle.
    Let $\nu$ be an infinitesimal Beltrami differential with compact support in the interior of $D_1 \cup D_2$ and let $f_j : \disk \to D_j$, $j = 1, 2$, be any Riemann mappings.
    Then,
    \begin{equation}
    \eval{\pdv{\varepsilon}}_{\varepsilon = 0}
        \lpotx{\hat \C, 2}{Z}(\omega^{\varepsilon \nu}(\gamma_1), \omega^{\varepsilon \nu}(\gamma_2))
        = - \frac{1}{3\pi} \left(
            \iint_{D_1} \nu \, \schwarzian{f_1^{-1}} \, \dz
            + \iint_{D_2} \nu \, \schwarzian{f_2^{-1}} \, \dz
        \right).
    \end{equation}
    The variation is independent of the choice of Riemann mappings $f_1$, $f_2$.
\end{corollary}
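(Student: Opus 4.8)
The plan is to reduce the statement directly to the variational formula of Proposition~\ref{prop:two_loop_variation} for the probabilistic potential $\lpot{\hat \C, 2}$, using the comparison identity~\eqref{eq:lpot_z_two} that relates the CFT potential $\lpotx{\hat \C, 2}{Z}$ to $\lpot{\hat \C, 2}$. That identity reads
\begin{equation*}
    \lpotx{\hat \C, 2}{Z}(\gamma_1, \gamma_2) = \lpot{\hat \C, 2}(\gamma_1, \gamma_2) + f_Z(\tau) + \const,
\end{equation*}
where $\tau$ is the modulus of the annulus $A$ bounded by $\gamma_1$ and $\gamma_2$ and $\const$ does not depend on the configuration. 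Since the additive constant is independent of the loops, it drops out of the derivative in $\varepsilon$, and the entire content of the corollary will hinge on controlling the term $f_Z(\tau)$ under the family of deformations $\omega^{\varepsilon \nu}$.

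The key observation, which I would state explicitly, is that the Beltrami differential $\nu$ has compact support in the interior of $D_1 \cup D_2$ and in particular vanishes on the annulus $A$. Because the solution $\omega^{\varepsilon \nu}$ of the Beltrami equation is therefore holomorphic (indeed conformal) on a neighborhood of $A$, it maps $A$ to a conformally equivalent annulus of the \emph{same modulus} $\tau$. Hence $\tau$ is constant along the entire deformation family, so $f_Z(\tau)$ is independent of $\varepsilon$ and contributes nothing to the derivative. This is precisely the remark recorded just before the corollary's statement, that the variations in Proposition~\ref{prop:two_loop_variation} preserve the modulus, and it is the crux of the argument.

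Putting these together, I would differentiate the comparison identity in $\varepsilon$ at $\varepsilon = 0$: the constant term and the $f_Z(\tau)$ term both have vanishing derivative, leaving
\begin{equation*}
    \eval{\pdv{\varepsilon}}_{\varepsilon = 0}
    \lpotx{\hat \C, 2}{Z}(\omega^{\varepsilon \nu}(\gamma_1), \omega^{\varepsilon \nu}(\gamma_2))
    =
    \eval{\pdv{\varepsilon}}_{\varepsilon = 0}
    \lpot{\hat \C, 2}(\omega^{\varepsilon \nu}(\gamma_1), \omega^{\varepsilon \nu}(\gamma_2)),
\end{equation*}
and the right-hand side is evaluated by Proposition~\ref{prop:two_loop_variation}, yielding the claimed Schwarzian integral formula. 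The independence of the choice of Riemann mappings $f_1$, $f_2$ is then inherited directly from the corresponding statement in Proposition~\ref{prop:two_loop_variation}.

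The only step requiring genuine care — and what I would regard as the main (if modest) obstacle — is justifying that $f_Z(\tau)$ is truly constant along the deformation rather than merely smooth, i.e.\ confirming that the modulus is exactly preserved. This rests on the fact that $\omega^{\varepsilon \nu}$ restricts to a conformal map on $A$ because $\nu \equiv 0$ there; one should verify that the boundary behavior of $\omega^{\varepsilon \nu}$ on $\gamma_1$, $\gamma_2$ is compatible with identifying $\omega^{\varepsilon \nu}(A)$ as an annulus of modulus $\tau$ (so that $f_Z$ is evaluated at the same argument throughout). Once the modulus-invariance is in hand, the remainder is immediate from~\eqref{eq:lpot_z_two} and Proposition~\ref{prop:two_loop_variation}.
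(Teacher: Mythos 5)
Your proposal is correct and follows the same route as the paper: the paper derives the corollary as a direct consequence of Equation~\eqref{eq:lpot_z_two}, noting that the variations of Proposition~\ref{prop:two_loop_variation} preserve the modulus $\tau$ (since $\nu$ vanishes on $A$, making $\omega^{\varepsilon\nu}$ conformal there), so that $f_Z(\tau)$ contributes nothing to the derivative. Your additional care about the modulus being exactly preserved is precisely the observation the paper records just before stating the corollary.
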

Hence we conclude that the infimum of generalized two-loop Loewner potentials $\lpotx{\hat \C, 2}{Z}$ is also obtained by minimizing among pairs of disjoint circles.
To find these, we follow a strategy similar to Proposition~\ref{prop:lpot_circles}.
To this end, we find the following criterion.
\begin{proposition}
    \label{prop:lpot_z_circles}
    Fix any trivialization $Z$ of the real determinant line bundle.
    There exists a pair of disjoint circles attaining the infimum of the Loewner potential $\lpotx{\hat \C, 2}{Z}$ if and only if 
    \begin{equation}
        \label{eq:lpot_z_circles}
        e^{-\frac{\pi}{3} \charge \tau}
        Z_{\flatmetric}(
            \annulus_\tau
        )
    \end{equation}
    has a global minimum in $\tau \in (0, \infty)$. Here, $\tau$ is the modulus of the annulus between the circles and $Z(\blank) = Z_g(\blank) [g]$ is the trivialization with respect to a metric $g$.
\end{proposition}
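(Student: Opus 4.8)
The plan is to first reduce the minimization to a one-parameter problem over moduli of circles, and then to evaluate $\lpotx{\hat \C, 2}{Z}$ explicitly on a pair of concentric circles. For the reduction I would invoke Corollary~\ref{cor:two_loop_variation_cft}: the modulus-preserving variation of $\lpotx{\hat \C, 2}{Z}$ is given by the same Schwarzian integrals $\iint_{D_j}\nu\,\schwarzian{f_j^{-1}}\dz$ as in the probabilistic case, and these vanish for every admissible infinitesimal Beltrami differential $\nu$ precisely when $f_1$, $f_2$ are Möbius transformations, i.e.\ when $\gamma_1$, $\gamma_2$ are circles. Hence any configuration realizing the infimum must consist of two circles, and by conformal invariance such a configuration is determined by the modulus $\tau$ of the intermediate annulus. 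It therefore suffices to decide when the one-parameter function $\tau\mapsto\lpotx{\hat \C, 2}{Z}(e^{-2\pi\tau}S^1,S^1)$ attains its infimum on $(0,\infty)$.

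For the explicit evaluation I would normalize $\gamma_1=e^{-2\pi\tau}S^1$, $\gamma_2=S^1$, so that $D_1=e^{-2\pi\tau}\disk$, $A=\annulus_\tau$, $D_2=\disk^*$, and insert these into the defining formula~\eqref{eq:def_lpot_z_two}. Since $\lpotx{\hat \C, 2}{Z}$ is independent of the chosen metric, I fix $g$ to be flat near $0$ with a round cap at $\infty$; then $Z_g(\hat \C)$ and $Z_g(\disk^*)$ are $\tau$-independent and enter only as constants. For the shrinking disk $D_1$, relation~\eqref{eq:z_simply} (available because the moduli space of simply connected domains is a single point), combined with the flat-disk determinant $\log\detz{\restrict{\flatmetric}{e^{-2\pi\tau}\disk}}=\frac{2\pi}{3}\tau+\const$ used in Proposition~\ref{prop:lpot_circles} and the fact that $\boundaryterm{\flatmetric}(e^{-2\pi\tau}\disk)$ is constant by Gauss--Bonnet ($\chi(\disk)=1$, $R_{\flatmetric}=0$), gives $\tfrac{2}{\charge}\log Z_g(D_1)=-\frac{2\pi}{3}\tau+\const$. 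The only remaining $\tau$-dependence is carried by the annulus, so that
\begin{equation}
    \lpotx{\hat \C, 2}{Z}(e^{-2\pi\tau}S^1,S^1)
    = -\frac{2\pi}{3}\tau + \frac{2}{\charge}\log Z_{\flatmetric}(\annulus_\tau) + \const .
\end{equation}
As a cross-check I would rederive the same identity through Equation~\eqref{eq:lpot_z_two} and Proposition~\ref{prop:lpot_circles}, verifying that the $-\log\tau$ and $-2\log\phi(e^{-4\pi\tau})$ terms of $\lpot{\hat \C, 2}$ cancel exactly against those generated by the flat-annulus determinant appearing in $f_Z(\tau)$ via~\eqref{eq:z_annulus} (where $\boundaryterm{\flatmetric}(\annulus_\tau)=1$ since $\chi(\annulus_\tau)=0$).

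Finally I would rewrite the displayed formula as
\begin{equation}
    \lpotx{\hat \C, 2}{Z}(e^{-2\pi\tau}S^1,S^1)
    = \frac{2}{\charge}\log\!\Big(e^{-\frac{\pi}{3}\charge\tau}\,Z_{\flatmetric}(\annulus_\tau)\Big) + \const ,
\end{equation}
which exhibits $\lpotx{\hat \C, 2}{Z}$ on circles as $2/\charge$ times the logarithm of the quantity in the criterion~\eqref{eq:lpot_z_circles}, up to an additive constant. Because $\log$ is strictly monotone and $2/\charge$ has fixed sign, the infimum of $\tau\mapsto\lpotx{\hat \C, 2}{Z}(e^{-2\pi\tau}S^1,S^1)$ is attained if and only if $e^{-\frac{\pi}{3}\charge\tau}Z_{\flatmetric}(\annulus_\tau)$ attains a global minimum on $(0,\infty)$ (one must track the sign of $\charge$ when passing between the extremum of the potential and that of the criterion), and combined with the first step this yields the stated equivalence. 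I expect the main obstacle to be the determinant bookkeeping in the second step: correctly assembling the flat zeta-determinants and the Gauss--Bonnet boundary terms on the disk and the annulus so that all $\tau$-dependence except the linear term $-\frac{2\pi}{3}\tau$ and the annulus partition function $Z_{\flatmetric}(\annulus_\tau)$ cancels, while keeping the metric-independence and the prefactor $2/\charge$ straight.
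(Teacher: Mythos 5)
Your proposal is correct and follows essentially the same route as the paper: reduction to concentric circles via Corollary~\ref{cor:two_loop_variation_cft}, then evaluation of $\lpotx{\hat \C, 2}{Z}$ on $(e^{-2\pi\tau}S^1, S^1)$ in a metric that is flat on the unit disk, which yields $\lpotx{\hat \C, 2}{Z} = \frac{2}{\charge}\log\big(e^{-\frac{\pi}{3}\charge\tau} Z_{\flatmetric}(\annulus_\tau)\big) + \const$ exactly as in the paper's Equation~\eqref{eq:lpot_z_flat}. The only minor differences are that the paper obtains $\log Z_{\flatmetric}(e^{-2\pi\tau}\disk) = \log Z_{\flatmetric}(\disk) - \frac{\pi}{3}\charge\tau$ directly from the conformal anomaly $\conformalanomaly(-2\pi\tau, \flatmetric) = -\frac{\pi}{3}\tau$ rather than via Equation~\eqref{eq:z_simply}, Weisberger's determinant formula, and Gauss--Bonnet, and that your metric must be taken flat on all of $\disk$ (not merely ``near $0$'') so that the identifications $Z_g(D_1) = Z_{\flatmetric}(e^{-2\pi\tau}\disk)$ and $Z_g(A) = Z_{\flatmetric}(\annulus_\tau)$ hold for every $\tau \in (0,\infty)$; your parenthetical caveat about tracking the sign of $\charge$ when passing between extrema of the potential and of the criterion is a legitimate subtlety that the paper's own proof glosses over as well.
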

\begin{proof}
    By conformal invariance, without loss of generality, assume that $\gamma_1 = e^{-2\pi \tau} \, S^1$ and $\gamma_2 = S^1$.
    Fix a metric $g$ which is the flat metric $\flatmetric$ on $\disk$ and extends smoothly to $\hat \C$ (this choice is independent of $\tau$).
    Using the conformal anomaly $\conformalanomaly(-2\pi \tau, \flatmetric) = -\frac{\pi}{3} \tau$ from scaling $e^{-2\pi \tau} \disk$ to $\disk$, we find
    \begin{equation}
            \log Z_{\flatmetric}(e^{-2\pi \tau} \disk)
            =
            \log Z_{e^{-4 \pi \tau} \flatmetric}(\disk)
            =
            \log Z_{\flatmetric}(\disk)
            - \frac{\pi}{3} \charge \tau
            .
    \end{equation}
    By Equation~\eqref{eq:def_lpot_z_two} we have
    \begin{equation}
    \begin{aligned}
    \label{eq:lpot_z_flat}
        \frac{\charge}{2} \lpotx{\hat \C, 2}{Z}(\gamma_1, \gamma_2) &=
        \log \frac{
            Z_{\flatmetric}(e^{-2\pi \tau} \disk)
            Z_{\flatmetric}(
                \annulus_\tau
            )
            Z_g(\disk^*)
        }{
            Z_g(\hat \C)
        }
        \\ &=
            \log Z_{\flatmetric}(
                \annulus_\tau
            )
            - \frac{\pi}{3} \charge \tau
            + \const
    \end{aligned}
    \end{equation}
    Discarding the terms independent of $\tau$, we find that \eqref{eq:lpot_z_flat} being minimal at $\tau$ is equivalent to Equation~\eqref{eq:lpot_z_circles} being minimal.
\end{proof}

\begin{example}
    In Section~\ref{section:lpot_variation_modulus}, it was shown that the two-loop Loewner potential~\eqref{eq:lpot_def} associated to
    \begin{equation}
        Z_{\flatmetric}(\annulus_\tau) = \left(
            \frac{
                \detz{\restrict{\flatmetric}{\annulus_\tau}}
            }{
                B_{\flatmetric}(\annulus_\tau)
            }
        \right)^{-\charge/2}
    \end{equation}
    diverges to $-\infty$ as $\tau \to \infty$.
\end{example}

\begin{figure}
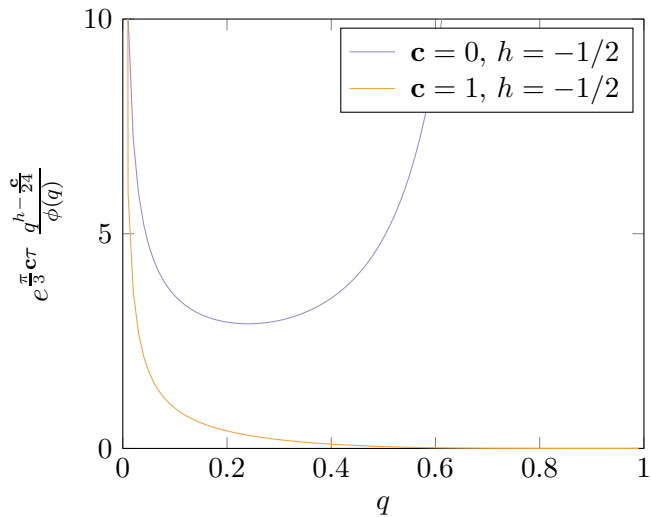

\centering
\includestandalone[]{figures/fig_characters}
\caption{
    Plots of two instances of Equation~\eqref{eq:lpot_z_circles}, where the partition function is the Verma module Virasoro character~\eqref{eq:virasoro_character}.
    Note that one of the functions has a global minimum for $q = e^{-\pi /\tau} \in (0, 1)$ while the other is minimal for $q \to 1$ which corresponds to $\tau \to \infty$, that is, the circles moving away from each other.
}
\label{fig:characters}
\end{figure}

\begin{example}
\label{example:characters}
    By Example~\ref{example:trivializations}, Item~\ref{item:cft_trivialization}, $Z_{\flatmetric}(\annulus_\tau)$ may be given by CFT partition functions on annuli, which are studied in the work of Cardy on boundary conformal field theory (BCFT) \cite{cardy_fusion, cardy_boundary_conditions, cardy_bcft}.
    If the BCFT has a discrete spectrum $\cftspec \subset \R$, it is argued that the partition functions are of the general form\footnote{
        Cardy uses a periodic strip instead of an annulus. The conformal mapping between these is of the form $z \mapsto e^{\ii C z}$ where $C$ is a constant depending on $\tau$. These conformal maps give constant conformal factors. By the Gauss--Bonnet theorem, the conformal anomaly of such conformal transformations with respect to a flat metric vanishes (since the Euler characteristic of an annulus is $0$).
        Thus, the annulus partition functions agree.
    }
    \begin{equation}
        Z_{\flatmetric}(\annulus_\tau)
        =
        \sum_{h \in \cftspec} n_h \chi_{\charge, h}(q)
        , \qquad
        q = e^{-\frac{\pi}{\tau}}.
    \end{equation}
    The coefficients $n_h \in \Zpos$ may depend on the choice of two conformally invariant boundary conditions at the two boundary components of $\annulus_\tau$.
    The function $\chi_{\charge, h}(q)$ is the character of the irreducible highest-weight representation of the Virasoro algebra with highest weight~$h$,
    \begin{equation}
        \chi_{\charge, h}(q)
        = q^{\frac{\charge}{24}} \Tr q^{L_0}
        .
    \end{equation}

    For simplicity, we consider only the case of the character of a single Verma module of weight $h$ of the Virasoro algebra. In this case, the character is
    \begin{equation}
    \label{eq:virasoro_character}
        \chi_{\charge, h}(q) = \frac{q^{h - \frac{\charge}{24}}}{\phi(q)}
        =
        \frac{q^{h - \frac{\charge}{24}}}{\prod_{k = 1}^\infty (1 - q^k)}
        ,
    \end{equation}
    with the Euler function $\phi$ also appearing in Equation~\eqref{eq:euler}. Note that the representations may not be irreducible for certain values of $\charge$ and $h$ (which are called minimal models).
    Figure~\ref{fig:characters} compares plots of Equation~\eqref{eq:lpot_z_circles} for two values of $\charge$ and $h$ demonstrating the possibility of existence and non-existence of the minimizer of the two-loop Loewner potential.
    We leave the detailed study and interpretation of the existence of the minimum in the context of BCFT as a possible direction for future work.
\end{example}

\appendix

\section{Background}
\label{appendix:background}

\subsection{Brownian loop measure}
\label{appendix:blm}

The Brownian loop measure is an infinite measure on the set of continuous loops in $\hat \C$ \cite{loop_soup}.
On subdomains $D \subset \hat \C$, it is defined by restricting the Brownian loop measure in $\hat \C$, making it restriction invariant.
It is also conformally invariant, meaning that for conformally equivalent subdomains $D_1$ and $D_2$ of $\hat \C$, the pushforward of the measure on $D_1$ onto loops in $D_2$ is exactly the measure in $D_2$.
In this work, only the following quantity appears,
\begin{equation}
    \blmpair_D(V_1, V_2) = \text{total mass of Brownian loops in $D$ intersecting both $V_1$ and $V_2$},
\end{equation}
which is finite for subdomains $D$ with non-polar boundary and compact disjoint non-polar subsets $V_1, V_2 \subset D$.

For $D = \hat \C$ however, $\blmpair_D(V_1, V_2)$ is infinite due to the infinite mass of increasingly large loops. By removing a small disk (e.g.\ at infinity) and shrinking it to a point, $\blmpair_{\hat \C}(V_1, V_2)$ may be renormalized in the following way \cite{field_lawler},
\begin{equation}
\label{eq:blmpairr}
        \blmpairr(V_1, V_2)
        = \lim_{R \to \infty} \big(
            \blmpair_{R \, \disk}(V_1, V_2) - \log \log R
        \big) < \infty
        .
\end{equation}
The renormalization may be thought of as the removal of a small disk $R \disk^*$ at infinity, hence limiting the size of the loops.
In fact, the same limit is obtained by removing small disks at other points, implying that the limit remains invariant under Möbius transformations.

\subsection{One-loop SLE}
\label{appendix:one_loop_sle}

For $\kappa \in [0, 4)$, one-loop Schramm--Loewner evolution in a Riemann surface (with boundary) $\Sigma$ is a measure on the space of simple loops in $\Sigma$, which is the set of subsets $\gamma \subset \Sigma$ homeomorphic to $S^1$, equipped with the Hausdorff metric with respect to a conformal Riemannian metric on $\Sigma$.
See \cite{Zhan21} for a detailed construction, at least in the case of the Riemann sphere $\Sigma = \hat \C$ and for subdomains $\Sigma = D \subset \hat \C$.
In fact, the loop measure on subdomains $D \subset \hat \C$ is defined from the loop measure on $\hat \C$ by the \emph{restriction covariance property},
\begin{equation}
    \label{eq:restriction_covariance}
    \frac{
        \dd \sleloop{D}
    }{
        \dd \sleloop{\hat \C}
    }
    (\gamma)
    =
    \boldone_{\gamma \subset D}
    e^{\frac{\charge}{2} \blmpairr(\gamma, \hat \C \setminus D)}
    .
\end{equation}
The one-loop SLE measure on $\hat \C$ has recently been shown to be the unique nontrivial and locally finite measure on loops in $\hat \C$ satisfying \emph{conformal invariance} of the restricted measures $\sleloop{D}$ \cite{baverez_jego} (up to a scalar). Namely, local conformal invariance means that for any conformally equivalent subdomains $D_1$ and $D_2$ of $\hat \C$, the pushforward of $\sleloop{D_1}$ onto loops in $D_2$ is exactly $\sleloop{D_2}$.

The Loewner potential for the one-loop measure is
\begin{equation}
\label{eq:lpot_one_detz}
    \lpot{\hat \C}(\gamma) =
    \log \frac{
    \detz{\restrict{g}{\hat \C}}
    }{
    \detz{\restrict{g}{D_1}}
    \detz{\restrict{g}{D_2}}
    },
\end{equation}
where $D_1$ and $D_2$ are the connected components of $\hat \C \setminus \gamma$ and $g$ is any metric on $\hat \C$. The potential is independent of the metric by \eqref{eq:polyakov_alvarez} in Appendix~\ref{appendix:zeta}.

The relation to the  Loewner energy of a single loop is given by subtraction of the infimum of the potential,
\begin{equation}
    \lenergy{\hat \C}(\gamma) = 12\big(\lpot{\hat \C}(\gamma) - \inf_{\eta} \lpot{\hat \C}(\eta)\big).
\end{equation}
See \cite{peltola_wang} for more details on the distinction between Loewner potential and energy.
In \cite{carfagnini_wang}, it is shown that $\lenergy{\hat \C}$ is an Onsager--Machlup functional for $\sleloop{\hat \C}$. We reformulate this theorem in terms of the potential.
\begin{theorem}
\label{thm:onsager_machlup_one}
    For simple analytic loops $\gamma$ and $\xi$ in $\hat \C$, we have
    \begin{equation}
        \lim_{\varepsilon \to 0}
        \frac{\sleloop{\hat \C}(
            \loopnbhd{\varepsilon}{\gamma}
        )}{\sleloop{\hat \C}(
            \loopnbhd{\varepsilon}{\xi}
        )
        }
        =
        e^{
        \frac{\charge}{2} \left(
            \lpot{\hat \C}(\gamma)
            - \lpot{\hat \C}(\xi)
        \right)
        }
        ,
    \end{equation}
    where $\loopnbhd{\varepsilon}{\blank}$ is the $\varepsilon$-neighborhood of a loop as defined in Equation~\eqref{eq:loop_nbhd}.
\end{theorem}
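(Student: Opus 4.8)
The plan is to specialize the argument of Theorem~\ref{thm:onsager_machlup_two} to a single loop and to translate the Loewner energy used in \cite{carfagnini_wang} into the Loewner potential through the normalization~\eqref{eq:lenergy_one}. Since $\inf_\eta \lpot{\hat \C}(\eta)$ is a loop-independent constant, it cancels in any difference, giving $\lpot{\hat \C}(\gamma) - \lpot{\hat \C}(\xi) = \tfrac{1}{12}\big(\lenergy{\hat \C}(\gamma) - \lenergy{\hat \C}(\xi)\big)$, so the asserted identity is equivalent to the Onsager--Machlup statement of \cite{carfagnini_wang} phrased through the energy. The right-hand side then reads $e^{\frac{\charge}{24}(\lenergy{\hat \C}(\gamma) - \lenergy{\hat \C}(\xi))}$.

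To run the direct argument, I would first fix an analytic parametrization $f_\gamma : S^1 \to \gamma$ extending to an annulus $U_R$ and write $\sleloop{\hat \C}(\loopnbhd{\varepsilon}{\gamma}) = \int_{\loopnbhd{\varepsilon}{\gamma}} \sleloop{\hat \C}(\dd \eta)$. Applying the restriction covariance property~\eqref{eq:restriction_covariance} to descend to $\sleloop{f_\gamma(U_R)}$, then conformal invariance to pull back along $f_\gamma$ (changing the integration domain to the common reference neighborhood $\loopnbhd{\varepsilon}{S^1}$ and replacing $\eta$ by $f_\gamma(\eta)$ in the Brownian-loop terms), and finally restriction covariance once more to re-express $\sleloop{U_R}$ through $\sleloop{\hat \C}$, the integrand acquires exactly the factor
\begin{equation*}
    e^{\frac{\charge}{2}\left(\blmpairr(\eta, \hat \C \setminus U_R) - \blmpairr(f_\gamma(\eta), \hat \C \setminus f_\gamma(U_R))\right)},
\end{equation*}
so that $\sleloop{\hat \C}(\loopnbhd{\varepsilon}{\gamma})$ equals this factor integrated against $\sleloop{\hat \C}$ over $\loopnbhd{\varepsilon}{S^1}$.

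The decisive analytic input, supplied by Corollary~2.4, Lemma~3.2 and Equation~(1.6) of \cite{carfagnini_wang}, is that both the supremum and the infimum of the exponent over the relevant neighborhoods converge, as $\varepsilon \to 0$, to the same value $\tfrac{1}{12}\lenergy{\hat \C}(\gamma)$. This gives a sandwich estimate: the integrand factor is squeezed between two quantities both tending to $e^{\frac{\charge}{24}\lenergy{\hat \C}(\gamma)}$ uniformly over the domain of integration, so it may be pulled out of the integral in the limit. Forming the ratio for $\gamma$ and $\xi$, the common masses $\sleloop{\hat \C}(\loopnbhd{\varepsilon}{S^1})$ cancel and one obtains $e^{\frac{\charge}{24}(\lenergy{\hat \C}(\gamma) - \lenergy{\hat \C}(\xi))}$, which is the claimed expression after converting energy back to potential.

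The main obstacle is the convergence of these renormalized Brownian-loop-measure differences to $\tfrac{1}{12}\lenergy{\hat \C}(\gamma)$, but this is precisely the technical heart already established in \cite{carfagnini_wang}. The only work remaining here is the restriction-covariance bookkeeping reducing the neighborhood of $\gamma$ to the standard circle and the squeeze argument, both of which are routine once the cited limits are in hand; indeed, this is exactly the one-loop specialization of the computation carried out in the proof of Theorem~\ref{thm:onsager_machlup_two}.
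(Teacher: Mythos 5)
Your proposal is correct and takes essentially the same route as the paper: the paper offers no separate proof of Theorem~\ref{thm:onsager_machlup_one}, presenting it as the Onsager--Machlup result of \cite{carfagnini_wang} reformulated via the normalization~\eqref{eq:lenergy_one}, which is exactly your opening reduction (the infimum constant cancels in differences of potentials). Your direct argument --- restriction covariance, conformal invariance to pull back to $\loopnbhd{\varepsilon}{S^1}$, the limits from Corollary~2.4, Lemma~3.2 and Equation~(1.6) of \cite{carfagnini_wang}, and the squeeze --- is precisely the one-loop specialization of the paper's proof of Theorem~\ref{thm:onsager_machlup_two}.
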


\subsection{Zeta-regularized determinants of Laplacians}
\label{appendix:zeta}

In this section, we provide results on zeta-regularized determinants of Laplacians associated to compact Riemann surfaces with or without boundary $\Sigma$ and conformal metrics $g$ on $\Sigma$, which appear in this work.

Since $\Sigma$ is compact, the positive Laplacian or Laplace--Beltrami operator $\Delta_g$ with Dirichlet boundary condition has a discrete spectrum.
We first consider the case where $\partial \Sigma \neq \emptyset$.
In that case, the zeta-regularized determinant $\detz{g}$ is defined by analytic continuation of the associated spectral zeta function \cite{ray_singer}.
Most notably, we are interested in the change of $\detz{g}$ under conformal transformations replacing $g$ by $e^{2\sigma} g$ for $\sigma \in C^\infty(\Sigma, \R)$.
The change is given by the Polyakov--Alvarez anomaly formula \cite{polyakov, alvarez, OPS88},
\begin{equation}
    \label{eq:polyakov_alvarez}
    \begin{gathered}
    \frac{\detz{e^{2\sigma} g}}{\detz{g}}
    = e^{\paformula(\sigma, g)}, \\
    \paformula(\sigma, g)
    =
    -\frac{1}{6 \pi} 
    \iint_\Sigma \bigg(
    \frac{1}{2} |\nabla_g \sigma|_g^2 
    + R_g \sigma
    \bigg) \dvol{g}
    - \frac{1}{6 \pi}
    \int_{\partial \Sigma} \bigg(
        k_g \sigma + \frac{3}{2} N_g \sigma
    \bigg) \dboundary{g}
    ,
    \end{gathered}
\end{equation}
where $\nabla_g$, $R_g$, $k_g$ and $N_g$ are respectively the divergence, Gaussian curvature, boundary curvature and normal derivative with respect to the metric $g$.

A zeta-regularized determinant of the Laplacian may also be defined for conformal metrics on compact Riemann surfaces without boundary.
However, due to lack of a boundary condition, the associated Laplacian has a zero eigenvalue associated to the constant functions.
The definition of the zeta-regularized determinant excludes the zero eigenvalue and is denoted $\detzp{g}(\Sigma)$.
It also satisfies an anomaly formula, which comes with an additional term involving the volumes $\vol_g(\Sigma)$ and $\vol_{e^{2\sigma}g}(\Sigma)$ before and after the conformal transformation \cite{polyakov}.
To reconcile this, we define
\begin{equation}
    \label{eq:detz_boundary}
    \detz{g} \coloneqq \frac{\detzp{g}}{\vol_g(\Sigma)}.
\end{equation}
This definition turns Equation~\eqref{eq:polyakov_alvarez} into a unified Polyakov-Alvarez anomaly formula for surfaces with or without boundary.

In the case of a disk or an annulus and the flat metric $g = \flatmetric$, the explicit expressions for the zeta-regularized determinant of the Laplacian listed below were found in \cite{weisberger}.
The expressions involve the standard Riemannn zeta function $\zeta_R(s)$, defined by analytic continuation of $\zeta_R(s)=\sum_{k \geq 1} k^{-s}$, and the Euler function defined by
\begin{equation}
\label{eq:euler}
    \phi(x) = \prod_{k = 1}^\infty (1 - x^k)
    .
\end{equation}
For the flat metric on a disk $r \disk$ of radius $r > 0$, we have
\begin{equation}
\label{eq:det_disk}
    \log \detz{\restrict{\dz}{r \disk}}
    =
    - \frac{1}{6} \log 2
    - \frac{1}{2} \log \pi
    - \frac{1}{3} \log r
    - 2\zeta_R'(-1) - \frac{5}{12}
    .
\end{equation}
Also in the flat metric, on an annulus $A = \setsuchthat{z \in \C}{r_1 \leq |z| \leq r_2}$ for $r_2 > r_1 > 0$, we have
\begin{equation}
\label{eq:det_annulus}
    \log \detz{\restrict{\dz}{A}}
    =
    - \log \pi
    + \frac{1}{3} (\log r_1 - \log r_2)
    + \log (\log r_2 - \log r_1)
    + 2 \log \phi\left(\left(r_1/r_2\right)^2\right)
    .
\end{equation}

We use these explicit formulas to prove the following lemma, which describes a renormalization of the zeta-regularized determinants similar to that of the Brownian loop measure in Equation~\eqref{eq:blmpairr}. In fact, we relate the two in an application of the lemma in Proposition~\ref{prop:lpot_detz}.
Figure~\ref{fig:setup} illustrates the setup of the lemma.

\begin{lemma}
\label{lemma:det_lim}
Let $D_1$, $A$, $D_2$ be the domains separated by smooth loops $\gamma_1$ and $\gamma_2$ in $\hat \C$.
Assume that $0 \in D_1$ and $\infty \in D_2$.
For any metric $g$ on $\hat \C$, we have
    \begin{equation}
    \label{eq:det_lim}
    \begin{aligned}
        \lim_{R \to \infty} \Bigg(
            &\log \frac{
                \detz{\restrict{g}{D_1 \cup A}}
                \detz{\restrict{g}{(A \cup D_2) \setminus R \, \disk^*}}
            }{
                \detz{\restrict{g}{\hat \C \setminus R \, \disk^*}}
                \detz{\restrict{g}{A}}
            }
            - \log \log R
        \Bigg)
        \\
        = 
        &\log \frac{
            \detz{\restrict{g}{D_1 \cup A}}
            \detz{\restrict{g}{A \cup D_2}}
        }{
            \detz{\restrict{g}{\hat \C}}
            \detz{\restrict{g}{A}}
        }
        + \const,
    \end{aligned}
    \end{equation}
    where the constant is independent of $\gamma_1$, $\gamma_2$, and $g$.
\end{lemma}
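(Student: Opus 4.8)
The plan is to first exploit a metric-independence that reduces \eqref{eq:det_lim} to a single convenient metric, then to compute the model case of round circles explicitly, and finally to pass to arbitrary smooth loops by a gluing argument along $\gamma_1$. For the first step, I would observe that for every fixed $R$ the left-hand side of \eqref{eq:det_lim} is independent of $g$. Writing $R\disk = \hat\C \setminus R\disk^*$, a conformal change $g \mapsto e^{2\sigma} g$ acts on the four determinants $\detz{\restrict{g}{D_1\cup A}}$, $\detz{\restrict{g}{(A\cup D_2)\setminus R\disk^*}}$, $\detz{\restrict{g}{R\disk}}$, $\detz{\restrict{g}{A}}$ through the Polyakov--Alvarez formula \eqref{eq:polyakov_alvarez}. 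In the signed combination the bulk integrands cover each point with total multiplicity $0$, since the signed sum of characteristic functions $\chi_{D_1\cup A} + \chi_{(A\cup D_2)\setminus R\disk^*} - \chi_{R\disk} - \chi_A$ vanishes, and the boundary integrands cancel with matching co-orientations: $\gamma_2$ appears in $D_1\cup A$ and $A$ with opposite signs, $\gamma_1$ in $(A\cup D_2)\setminus R\disk^*$ and $A$ with opposite signs, and the circle $\{|z|=R\}$ once as the outer boundary of $(A\cup D_2)\setminus R\disk^*$ and once as that of $R\disk$, again with opposite signs. The same bookkeeping shows the right-hand side is metric independent. It therefore suffices to prove \eqref{eq:det_lim} for one metric, and the resulting constant will be automatically $g$-independent; I would take $g = \flatmetric$, which is admissible because every domain on the left-hand side is bounded and hence has compact closure in $\C$.

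For the model case $\gamma_1 = r_1 S^1$, $\gamma_2 = r_2 S^1$ with $r_1 < r_2$, all four domains are round flat disks and annuli, so the explicit evaluations \eqref{eq:det_disk} and \eqref{eq:det_annulus} apply directly. A direct substitution shows the combination collapses almost completely: the disk constant $-\tfrac16\log 2 - \tfrac12\log\pi - 2\zeta_R'(-1) - \tfrac5{12}$ cancels between $R\disk$ and $D_1 \cup A = r_2\disk$, the $-\log\pi$ of the two annuli cancel, and all the $-\tfrac13\log(\cdot)$ terms cancel in the pairing of the large annulus $\{r_1 \le |z| \le R\}$ against $R\disk$ and $A = \{r_1 \le |z| \le r_2\}$. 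The only surviving $R$-dependence is the modulus term $\log(\log R - \log r_1)$ of the large annulus, together with $2\log\phi((r_1/R)^2) \to 0$. Subtracting $\log\log R$ and letting $R \to \infty$ gives the finite limit $-\log(\log r_2 - \log r_1) - 2\log\phi((r_1/r_2)^2)$. This confirms convergence and displays the mechanism: the $\log\log R$ divergence is produced entirely by the $\log(\mathrm{modulus})$ term of the annulus that degenerates as its outer radius runs off to $\infty$.

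To reach arbitrary smooth loops I would argue that $\mathrm{LHS}-\mathrm{RHS}$ is independent of $\gamma_1, \gamma_2$. After the factors $\detz{\restrict{g}{D_1\cup A}}$ and $\detz{\restrict{g}{A}}$ cancel, this difference equals $\log(\detz{\restrict{g}{(A\cup D_2)\setminus R\disk^*}}/\detz{\restrict{g}{R\disk}}) - \log(\detz{\restrict{g}{A\cup D_2}}/\detz{\restrict{g}{\hat\C}})$. Both ratios are governed by cutting along $\gamma_1$, since $R\disk = D_1 \cup_{\gamma_1} ((A\cup D_2)\setminus R\disk^*)$ and $\hat\C = D_1 \cup_{\gamma_1} (A\cup D_2)$. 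Applying the Burghelea--Friedlander--Kappeler gluing formula along $\gamma_1$ makes $\detz{\restrict{g}{D_1}}$ cancel and reduces the difference to $\log {\det}'_\zeta(N^\infty) - \log {\det}_\zeta(N^R)$, where $N^R$ and $N^\infty$ are the Neumann jump operators on $\gamma_1$ obtained from the two sides. All nonzero modes satisfy $N^R \to N^\infty$; the constant mode is special, since the harmonic function on $(A\cup D_2)\setminus R\disk^*$ equal to $1$ on $\gamma_1$ and $0$ on $\{|z|=R\}$ has boundary flux $\sim 2\pi/\log R$, so the lowest eigenvalue of $N^R$ is of order $1/\log R$ while that of $N^\infty$ vanishes. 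Hence $\log {\det}_\zeta(N^R) = -\log\log R + \log {\det}'_\zeta(N^\infty) + o(1)$, the ${\det}'_\zeta(N^\infty)$ contributions cancel, and $\mathrm{LHS}-\mathrm{RHS}-\log\log R$ tends to a limit independent of the loops, which by the circular computation is the asserted universal constant.

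The main obstacle is precisely this last step. The gluing formula on the closed sphere carries a zero-mode factor, involving the volume and the length of $\gamma_1$, that must be tracked so that the surviving additive constant is genuinely independent of $\gamma_1$; and one must verify that the leading coefficient of the $1/\log R$ eigenvalue is universal, the logarithmic capacity of $\gamma_1$ entering only through subleading terms that cancel against matching contributions in the right-hand side. A purely explicit alternative, uniformizing the degenerating annulus $(A\cup D_2)\setminus R\disk^*$ onto a round annulus of modulus $\sim \tfrac1{2\pi}\log R$ and controlling the convergence of the Polyakov--Alvarez corrections of the uniformizing map as $R \to \infty$, meets the same difficulty in analytic rather than spectral form.
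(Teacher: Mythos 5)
Your steps 1 and 2 are correct, and they are precisely the ingredients the paper's own argument runs on: metric independence via cancellations in the Polyakov--Alvarez formula~\eqref{eq:polyakov_alvarez}, and the explicit evaluations \eqref{eq:det_disk}--\eqref{eq:det_annulus}. But note their scope: since $\flatmetric$ is singular at $\infty$, it is admissible only for the left-hand side of \eqref{eq:det_lim}, whose four domains are bounded; the right-hand side contains $\detz{\restrict{g}{\hat\C}}$ and $\detz{\restrict{g}{A \cup D_2}}$ and cannot be evaluated in $\flatmetric$. So your model computation determines the limit of the left-hand side for concentric circles only, and does not verify \eqref{eq:det_lim} even in that case; the whole lemma rests on your step 3, which is a program rather than a proof. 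Concretely: (i) as written, your reduction ``to $\log{\det}'_\zeta(N^\infty)-\log{\det}_\zeta(N^R)$'' and your asymptotic ``$\log{\det}_\zeta(N^R)=-\log\log R+\log{\det}'_\zeta(N^\infty)+o(1)$'' are each off by loop-dependent terms. The closed-surface Burghelea--Friedlander--Kappeler formula carries a zero-mode factor of the form $\vol_g(\hat\C)/\ell_g(\gamma_1)$ together with a local anomaly along $\gamma_1$, and the bottom eigenvalue is $\lambda_0(N^R)\sim 2\pi/\big(\ell_g(\gamma_1)\log R\big)$ --- its coefficient is \emph{not} universal, contrary to what you hope, but involves $\ell_g(\gamma_1)$. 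The lemma holds only because these $\log\ell_g(\gamma_1)$ errors cancel against each other (and the local anomalies of the two gluings coincide); tracking that cancellation is the actual content. (ii) Even granting the bookkeeping, the claim that the nonzero modes give ${\det}_\zeta(N^R)/\lambda_0(N^R)\to{\det}'_\zeta(N^\infty)$ is a genuine theorem: convergence of individual eigenvalues does not imply convergence of zeta-regularized determinants, and one needs uniform control of the heat trace of $N^R$ as $R\to\infty$, exactly at the spectral edge where the kernel of $N^\infty$ degenerates into the $1/\log R$ eigenvalue. You flag both points yourself as ``the main obstacle''; they are not routine, and without them the proof is incomplete.

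The gap is avoidable with the tools you already set up, and this is what the paper does. It uses metric independence not merely to pick a convenient conformal factor, but to pick the metric adapted to $\gamma_1$: let $\Psi : A \cup D_2 \to \disk^*$ be the exterior Riemann map with $\Psi(\infty)=\infty$, $\Psi'(\infty)>0$, extended to a diffeomorphism of $\hat\C$ (possible since $\gamma_1$ is smooth), and take $g = \Psi_* h$ with $h$ flat near the round disk and smoothly continued over $\infty$. By diffeomorphism invariance of the zeta-determinants, the difference of the two sides of \eqref{eq:det_lim} (in which $\detz{\restrict{g}{D_1 \cup A}}$ and $\detz{\restrict{g}{A}}$ cancel) is transported to a round-disk/round-annulus computation of exactly your step-2 type, plus the loop-independent combination $\log\detz{\restrict{h}{\hat\C}} - \log\detz{\restrict{h}{\disk^*}}$, which is conformally invariant; Weisberger's formulas then give the universal constant with no gluing formula and no spectral limit. (The one subtlety in that route is that the removed disk becomes $\Psi^{-1}(R\,\disk^*)$ rather than $R\,\disk^*$ itself, a replacement the paper makes silently.) In short, you proved the two facts the argument needs, but the missing idea is to transport the general configuration onto the circular one by a diffeomorphism built from the exterior Riemann map of $\gamma_1$, rather than to compare the two sides spectrally via BFK.
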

\begin{proof}
    Let $\Psi : D_2 \cup A \to \disk^*$ be the Riemann mapping such that $\Psi(\infty) = \infty$ and $\Psi'(\infty) > 0$ and define $B_R = \Psi^{-1}(R \, \disk^*)$ for $R > 1$ such that $R \, \disk^* \subset D_2$.

    Note that both the left- and right-hand sides of Equation~\eqref{eq:det_lim} are independent of the smooth Riemannian metric $g$ on $\hat \C$ before taking the limit, that is, for fixed $R > 0$.
    Let $\Psi$ extend to a diffeomorphism $\Psi : \hat \C \to \hat \C$. This is possible since the boundary $\partial (D_2 \cup A) = \gamma_1$ is smooth and thus $\Psi$ extends to a smooth homeomorphism on $\gamma_1$.
    Define $g = \Psi_* h$, where $h$ is the flat metric $\flatmetric$ on a neighborhood of
    $R \disk$
    and with smooth continuation on $R \disk^*$ (for instance, by interpolating with the round metric near $\infty$).
    Then --- still before taking the limit --- the difference of the left- and right-hand sides of Equation~\eqref{eq:det_lim} is
    \begin{equation}
    \label{eq:det_before_lim_difference}
    \begin{aligned}
        &
        \log \frac{
            \detz{\restrict{g}{\hat \C}}
            \detz{\restrict{g}{(A \cup D_2) \setminus B_R}}
        }{
            \detz{\restrict{g}{\hat \C \setminus B_R}}
            \detz{\restrict{g}{A \cup D_2}}
        }
        - \log \log R
        \\ = \; &
        \log \frac{
            \detz{\restrict{\Psi^* g}{\hat \C}}
            \detz{\restrict{\Psi^* g}{\{1 \leq |z| \leq R\}}}
        }{
            \detz{\restrict{\Psi^* g}{R \, \disk}}
            \detz{\restrict{\Psi^* g}{\disk^*}}
        }
        - \log \log R
        \\ = \; &
        \log \frac{
            \detz{\restrict{h}{\hat \C}}
            \detz{\restrict{\flatmetric}{\{1 \leq |z| \leq R\}}}
        }{
            \detz{\restrict{\flatmetric}{R \, \disk}}
            \detz{\restrict{h}{\disk^*}}
        }
        - \log \log R
        \\ = \; &
        \frac{1}{6} \log 2
        + \frac{1}{2} \log \pi
        + \frac{1}{3} \log R
        + 2\zeta_R'(-1) + \frac{5}{12}
        \\ &
        + \log \detz{\restrict{h}{\hat \C}}
        - \log\detz{\restrict{h}{\disk^*}}
        \\ &
        - \log \pi
        - \frac{1}{3} \log R
        + \log \log R
        + 2 \log \phi(R^{-2})
        \\ &
        - \log \log R
        \\ = \; &
        \frac{1}{6} \log 2
        + \frac{1}{2} \log \pi
        + 2\zeta_R'(-1) - \frac{5}{12}
        - \log \pi
        \\ &
        + \log \detz{\restrict{h}{\hat \C}}
        - \log\detz{\restrict{h}{\disk^*}}
        \\ &
        + 2 \log \phi(R^{-2})
        \\ \xrightarrow{r \to \infty} \; &
        \frac{1}{6} \log 2
        + \frac{1}{2} \log \pi
        + 2\zeta_R'(-1) - \frac{5}{12}
        - \log \pi
        \\ &
        + \log \detz{\restrict{h}{\hat \C}}
        - \log\detz{\restrict{h}{\disk^*}}
    \end{aligned}
    \end{equation}
    Since the last two terms are invariant under conformal changes of $h$ on $\disk^*$, we conclude that the limit is a constant, which is indeed independent of $\gamma_1, \gamma_2$ and $g$.
\end{proof}

\section{The real determinant line bundle and Loewner potentials}
\label{appendix:detrc}

In this appendix, we briefly introduce the real determinant line bundle and highlight the relation to Loewner energy as found in \cite[Theorem~3.8]{maibach_peltola}.
The generalization of this relation leads to a notion of Loewner potential for configurations of loops on Riemann surfaces.
To show that it is compatible with the definition of the two-loop Loewner potential on the Riemann sphere in Section~\ref{section:two_loop_sle}, we examine a particular choice of trivialization of the real determinant line bundle based on the zeta-regularized determinant of the Laplacian.
Other trivializations can be obtained, for example, from CFT partition functions.
These yield additional definitions of the Loewner potential, which are discussed further in Section~\ref{section:cft_partition_functions}.

The real determinant line of a compact Riemann surface $\Sigma$ (with boundary) is a real half-line of equivalence classes
\begin{equation}
    \Detrpc(\Sigma) = \setsuchthat{\lambda [g]}{\text{$\lambda \in \Rp$, $g$ conformal metric on $\Sigma$}},
\end{equation}
under the relation
\begin{gather}
\label{eq:detrc_relation}
\lambda [e^{2\sigma} g] = \lambda e^{\charge \conformalanomaly(\sigma, g)} [g], \\
\label{eq:conformal_anomaly}
\conformalanomaly(\sigma, g) =
\frac{1}{12 \pi} \iint_\Sigma \bigg(
\frac{1}{2} |\nabla_g \sigma|_g^2 + R_g \sigma
\bigg) \dvol{g}
+ \frac{1}{12 \pi} \int_{\partial_\Sigma} k_g \sigma \, \dboundary{g}.
\end{gather}
The constant $\charge \in \R$ is called the \emph{central charge} of the real determinant line and $\conformalanomaly(\sigma, g)$ is the \emph{conformal anomaly} of the conformal change of the metric $g$ to $e^{2\sigma} g$ by a function $\sigma \in C^\infty(\Sigma, \R)$.

If $f : \Sigma_1 \to \Sigma_2$ is an isomorphism of Riemann surfaces, then the real determinant lines are isomorphic by pushing forward the metric, $\lambda [g] \mapsto \lambda e^{\charge \conformalanomaly(\log|f'|)} [f_* g]$. The factor makes the pushforward invariant under automorphisms.
Therefore, the real determinant lines form $\Rp$-bundles
$\Detrpc(\moduli{\genus}{\boundaries})$
over the moduli spaces $\moduli{\genus}{\boundaries}$ of compact Riemann surfaces of genus $\genus$ with $\boundaries$ boundary components.
The moduli spaces consist of equivalence classes $[\Sigma] \in \moduli{\genus}{\boundaries}$ of Riemann surfaces of the respective type up to isomorphism.

The smooth structure of the real determinant line bundles is determined by the choice of a (globally defined) trivialization $\globaltriv$.
Since the choice of $\globaltriv$ is essential to the following discussion of generalized Loewner potentials, we 
emphasize the definition and give a few examples below.
\begin{definition}
    A \emph{trivialization} of the real determinant line bundle is a section
    \begin{equation}
        \globaltriv : \moduli{\genus}{\boundaries} \to \Detrpc(\moduli{\genus}{\boundaries}).
    \end{equation}
    Given any Riemann surface $\Sigma$ such that $[\Sigma] \in \moduli{\genus}{\boundaries}$ and any conformal metric $g$ on $\Sigma$, there exists $\globaltriv_g(\Sigma) > 0$ such that 
    \begin{equation}
        \globaltriv([\Sigma]) = \globaltriv_g(\Sigma) [g] \in \Detrpc(\Sigma).
    \end{equation}
\end{definition}
The relevant trivializations for this work are the following.
\begin{example}
\leavevmode
\makeatletter
\@nobreaktrue
\makeatother
\label{example:trivializations}
\begin{enumerate}
    \item
    \label{item:zeta_trivialization}
    \textnormal{\textbf{Zeta-regularized determinants:}}
    The Polyakov--Alvarez anomaly formula~\eqref{eq:polyakov_alvarez} and the conformal anomaly~\eqref{eq:conformal_anomaly} are related by a boundary term
    \begin{equation}
        \conformalanomaly(\sigma, g) + \frac{1}{2} \paformula(\sigma, g) = - \frac{1}{8\pi} \int_{\partial \Sigma} N_g \sigma \dboundary{g}.
    \end{equation}
    To obtain a trivialization of $\Detrc$, we introduce the functional
    \begin{equation}
        \boundaryterm{g}(\Sigma) = e^{\frac{1}{4\pi} \int_{\partial_\Sigma} k_g \dboundary{g}}
        , \qquad
        \frac{
            \boundaryterm{e^{2\sigma} g}(\Sigma)
        }{
            \boundaryterm{g}(\Sigma)
        }
        =
        e^{\frac{1}{4\pi} \int_{\partial \Sigma} N_g \sigma \dboundary{g}},
    \end{equation}
    with precisely this covariance.
    Therefore, the following elements of $\Detrpc$ are independent of the choice of conformal metric on~$\Sigma$,
    \begin{equation}
    \label{eq:trivzeta}
    \globaltrivzeta(\Sigma) = \left(
        \frac{\detz{g}}{\boundaryterm{g}(\Sigma)}
    \right)^{-\charge/2}
    [g] \in \Detrpc(\Sigma).
    \end{equation}
    As explained in Section~\ref{section:trivialization_sle}, this is the trivialization that is often implicitly used in the probabilistic study of SLE.
    \item
    \label{item:cft_trivialization}
    \textnormal{\textbf{CFT partition functions:}}
    Given a (boundary) conformal field theory of central charge $\charge$, which comes with partition functions $Z_g(\Sigma)$, define
    \begin{equation}
        \globaltrivcft(\Sigma) = Z_g(\Sigma) [g] \in \Detrpc(\Sigma).
    \end{equation}
    This element is independent of the choice of conformal metric $g$ by the Weyl covariance property of the partition functions \cite{gawedzki_lectures, lcft_review}.
    \item
    \textnormal{\textbf{Constant curvature metrics:}}
    For example, let $g_0(\Sigma)$ be the unique metric with constant curvature $+1$, $-1$, or $0$, such that the geodesic curvature of the boundary component vanishes \cite{OPS88}. Then, a trivialization is defined by
    \begin{equation}
        Z(\Sigma) = [g_0(\Sigma)].
    \end{equation}
\end{enumerate}
\end{example}

The construction that leads to generalized Loewner potentials involves the sewing isomorphisms $\sewisoloops{\vec \gamma}$ on real determinant lines inspired by the work of Segal~\cite{segal}.
These are multilinear maps on the determinant lines associated to the connected components of $\Sigma$ minus non-intersecting smooth loops $\vec \gamma = (\gamma_1, \dots, \gamma_N)$ in $\Sigma$,
\begin{equation}
\label{eq:sewing_iso}
\begin{aligned}
    \sewisoloops{\Sigma, \vec \gamma} : \bigotimes_{A \in \pi_0(\Sigma \setminus \vec \gamma)} \Detrpc(A) &\to \Detrpc(\Sigma) \\
    \bigotimes_{A \in \pi_0(\Sigma \setminus \vec \gamma)} \lambda_A [\restrict{g}{A}] &\mapsto \Big( \prod_{A \in \pi_0(\Sigma \setminus \vec \gamma)}\lambda_A \Big) [g].
\end{aligned}
\end{equation}
This sewing isomorphism works by picking a conformal metric $g$ on $\Sigma$ and expressing elements of the determinant lines of the connected components $A$ relative to the restricted metrics $\restrict{g}{A}$. By the locality of the conformal anomaly \eqref{eq:conformal_anomaly}, this definition is independent of the choice of $g$, see also \cite[Section~3.1]{maibach_peltola}.

Furthermore, the real determinant line of $N$ non-intersecting smooth loops $\vec \gamma$ in $\Sigma$ is defined as
\begin{equation}
    \Detrpc(\Sigma, \vec \gamma) =
    \Detrpc(\Sigma)
    \otimes
    \bigotimes_{A \in \pi_0(\Sigma \setminus \vec \gamma)}
    \big(
    \Detrpc(A)
    \big)^\vee
    .
\end{equation}
Applying the sewing isomorphisms \eqref{eq:sewing_iso} to the determinant line of $\vec \gamma$ as $\id \otimes (\sewisoloops{\Sigma, \gamma})^\vee$, we obtain elements $\Detrpc(\Sigma) \otimes \Detrpc(\Sigma)^\vee$.
The evaluation of the first component in the dual, denoted by $\evaluation$, yields a positive real number.
We denote the composition of these operations by
\begin{equation}
\begin{aligned}
    \evaluation_{\Sigma, \vec \gamma} = \evaluation \circ \sewisoloops{\vec \gamma} : \Detrpc(\vec \gamma, \Sigma) &\to \Rp
    .
\end{aligned}
\end{equation}
It is the logarithm of these numbers that define the generalized Loewner potential.
\begin{definition}
\label{def:loewner_potential_Z}
    The \emph{Loewner potential} of non-intersecting loops $\vec \gamma = (\gamma_1, \dots, \gamma_N)$ in $\Sigma$ with respect to a trivialization $\globaltriv$ of $\Detrpc$ is
    \begin{equation}
        \frac{\charge}{2} \lpotx{\Sigma}{\globaltriv}(\vec \gamma) = 
        \log \evaluation_{\Sigma, \vec \gamma}\Big(
            \;
            \globaltriv(\Sigma)
            \otimes
            \bigotimes_{A \in \pi_0(\Sigma \setminus \vec \gamma)}
            \globaltriv(A)^\vee
            \;
        \Big).
    \end{equation}
    Concretely, for $\charge \neq 0$ and $\globaltriv(\Sigma) = \globaltriv_g(\Sigma) [g]$ we have
    \begin{equation}
        \frac{\charge}{2}
        \lpotx{\Sigma}{\globaltriv}(\vec \gamma)
        =
        \log \globaltriv_g(\Sigma)
        - \sum_{A \in \pi_0(\Sigma \setminus \vec \gamma)} \log \globaltriv_g(A)
        .
    \end{equation}

\end{definition}
In particular, for $\charge \neq 0$, the Loewner potential of a single loop $\gamma$ separating $\hat \C$ into $D_1$ and $D_2$ is
\begin{equation}
    \label{eq:def_lpot_z_one}
    \lpotx{\hat \C}{Z}(\gamma) =
    \frac{2}{\charge} 
    \log \frac{
        Z_g(D_1)
        Z_g(D_2)
    }{
        Z_g(\hat \C)
    }
    .
\end{equation}
For two loops $\gamma_1$ and $\gamma_2$ bounding disks $D_1$ and $D_2$ and an annulus $A$, we recover Equation~\eqref{eq:def_lpot_z_two}.

\DeclareEmphSequence{}
\printbibliography

\end{document}